\tikzset{>=stealth',
     cvertex/.style={circle,draw=black,inner sep=1pt,outer sep=3pt},
     vertex/.style={circle,fill=black,inner sep=1pt,outer sep=3pt},
     star/.style={circle,fill=yellow,inner sep=0.75pt,outer sep=0.75pt},
     tvertex/.style={inner sep=1pt,font=\criptsize},
     gap/.style={inner sep=0.5pt,fill=white}}
\newcommand{\arrowrl}[3][20]
{
\hspace{-5pt}
\begin{tikzpicture}
\node (A) at (0,0) {};
\node (B) at (1,0) {};
\draw[->] ($(A)+(0,0.2)$) -- node [above] {$\scriptstyle f^*$} ($(B)+(0,0.2)$);
\draw [->] ($(B)+(0,0.2)$) -- node [below] {$\scriptstyle f_*$} ($(A)+(0,0.2)$);
\end{tikzpicture}
\hspace{-5pt}
}
\newcommand{\adj}[2][20]{\arrowrl}
\newcommand{\Z}{\ensuremath{\mathbb{Z}}}
\newcommand{\C}{\ensuremath{\mathbb{C}}}
\newcommand{\Ext}{\mathrm{Ext}}
\newcommand{\Hom}{\mathrm{Hom}}
\newcommand{\Gr}{\mathrm{Gr}}
\newcommand{\cA}{\mathcal{A}}
\newcommand{\cC}{\mathcal{C}}
\newcommand{\cF}{\mathcal{F}}
\newcommand{\cI}{\mathcal{I}}
\newcommand{\bC}{\mathbb{C}}
 \newcommand{\bZ}{\mathbb{Z}}
\DeclareMathOperator{\depth}{depth}
\DeclareMathOperator{\Spec}{Spec}
\newcommand{\CM}{\operatorname{{MCM}}}
\newcommand{\Modcat}{\operatorname{mod}\nolimits}
\newcommand{\grHom}{\operatorname{{grHom}}}
\newcommand{\grExt}{\operatorname{{grExt}}}
\newcommand{\mf}[1]{\ensuremath{\mathfrak{#1}}}   
\newcommand{\sqbox}{%
    \collectbox{%
        \@tempdima=\dimexpr\width-\totalheight\relax
        \ifdim\@tempdima<\z@
            \fbox{\hbox{\hspace{-.5\@tempdima}\BOXCONTENT\hspace{-.5\@tempdima}}}%
        \else
            \ht\collectedbox=\dimexpr\ht\collectedbox+.5\@tempdima\relax
            \dp\collectedbox=\dimexpr\dp\collectedbox+.5\@tempdima\relax
            \fbox{\BOXCONTENT}%
        \fi
    }%
}
\newtheorem{theorem}{Theorem}[section]         
\newtheorem{lemma}[theorem]{Lemma}
\newtheorem*{thma}{Theorem A}
\newtheorem*{thmb}{Theorem B}
\newtheorem*{thmc}{Theorem C}
\newtheorem{proposition}[theorem]{Proposition}
\newtheorem{corollary}[theorem]{Corollary}
\newtheorem{definition}[theorem]{Definition}
\newtheorem{remark}[theorem]{Remark}
\newtheorem{example}[theorem]{Example}
\theoremstyle{remark}
\theoremstyle{definition}
\numberwithin{equation}{section}
\newcommand{\unl}{\underline{\ell}}
\newcommand{\um}{\underline{m}}
\newcommand{\tunl}{\widetilde{\underline{\ell}}}
\newcommand{\tum}{\widetilde{\underline{m}}}
\newcommand{\Alm}{A(\unl, \um)}
\newcommand{\Blm}{B(\unl, \um)}
\newcommand{\tAlm}{A(\tunl, \tum)}
\newcommand{\tBlm}{B(\tunl, \tum)}
\newcommand{\cdim}{\mathrm{dim}_{\mathbb{C}}}
\tikzstyle{vertex}=[circle, draw, inner sep=0pt, minimum size=4pt]
\tikzstyle{bigvertex}=[circle, draw, inner sep=0pt, minimum size=10pt]
\tikzstyle{redvertex}=[diamond, draw=black, fill=red, inner sep=0pt, minimum size=5pt]
\tikzstyle{greenvertex}=[regular polygon, regular polygon sides=3, draw=black, fill=green, inner sep=0pt, minimum size=5pt]
\tikzstyle{fvertex}=[circle, draw,fill=black, inner sep=0pt, minimum size=4pt]
\title{Categories for Grassmannian cluster algebras of infinite rank}
\author{Jenny August, Man-Wai Cheung, Eleonore Faber, Sira Gratz, Sibylle Schroll}
\date{May 2019}
\address{Department of Mathematics, Aarhus University, Ny Munkegade 118, 8000 Aarhus C, Denmark}
\email{jennyaugust@math.au.dk}
\address{School of Mathematics, Kavli IPMU (WPI), UTIAS, The University of Tokyo, Japan}
\email{manwai.cheung@ipmu.jp}
\address{
School of Mathematics, University of Leeds, Leeds, LS2 9JT, UK
}
\email{e.m.faber@leeds.ac.uk}
\address{
Department of Mathematics, Aarhus University, Ny Munkegade 118, 8000 Aarhus C, Denmark
}
\email{sira@math.au.dk}
\address{Department of Mathematics, Universit\"at zu K\"oln, Weyertal 86-90, 50931 K\"oln, Germany}
\email{schroll@math.uni-koeln.de}
\date{\today}
\thanks{
J.A.~would like to thank the Max Planck Institute for Mathematics and a DNRF Chair from the Danish National Research Foundation (grant no. DNRF156) for their support.
M.C.~is supported by NSF grant DMS-1854512 and AMS Simons Travel Grants.
E.F.~was a Marie Sk{\l}odowska-Curie fellow at the University of Leeds (funded by the European Union's Horizon 2020 research and innovation programme under the Marie Sk{\l}odowska-Curie grant agreement No 789580) and is supported by EPSRC through EP/W007509/1. S.S.~is supported by  EPSRC through the Early Career Fellowship EP/P016294/1.
} 
\subjclass[2010]{13F60  
13C14 
13A02 
13H10 
14M15 
18N25 
}
\begin{document}

\begin{abstract}
We construct Grassmannian categories of infinite rank, providing an infinite analogue of the Grassmannian cluster categories introduced by Jensen, King, and Su. Each Grassmannian category of infinite rank is given as the category of graded maximal Cohen-Macaulay modules over a certain hypersurface singularity. We show that generically free modules of rank $1$ in a Grassmannian category of infinite rank are in bijection with the Pl\"ucker coordinates in an appropriate Grassmannian cluster algebra of infinite rank. Moreover, this bijection is structure preserving, as it relates rigidity in the category to compatibility of Pl\"ucker coordinates. Along the way, we develop a combinatorial formula to compute the dimension of the $\Ext^1$-spaces between any two generically free modules of rank $1$ in the Grassmannian category of infinite rank.
\end{abstract}

\maketitle


\maketitle

\section{Introduction}

Grassmannians are objects of great combinatorial and geometric beauty, which arise in myriad contexts. Their coordinate rings serve as a classical example in the theory of cluster algebras, whose genesis  
by Fomin and Zelevinsky \cite{FZ1} was initially motivated by total positivity in Lie theory as propagated by Lusztig, see for example \cite{Lusztig-survey}. The combinatorial structures of Grassmannians, in relation to total positivity, were first studied by Postnikov \cite{Postnikov}. Employing these combinatorial tools, Scott \cite{Scott} showed that coordinate rings of Grassmannians indeed carry a natural cluster algebra structure, which led to these objects becoming a staple in the study of cluster algebras.

Jensen, King, and Su \cite{JKS16} introduce an additive categorification of the Grassmannian cluster algebra $\mathbb{C}[\mathrm{Gr}(k,n)]$ of finite rank via $G$-equivariant maximal Cohen--Macaulay modules over the plane curve singularities $R_{(k,n)} = \mathbb{C}[x,y]/(x^k-y^{n-k})$, where $G$ is the cyclic group of order $n$ acting on $R_{(k,n)}$ in a natural way (cf.\ Section \ref{S:JKS}). They show that rank $1$ modules in $\mathrm{MCM}_G R_{(k,n)}$ are in one-to-one correspondence with Pl\"ucker coordinates in $\mathbb{C}[\mathrm{Gr}(k,n)]$, and that this bijection preserves structure: Rigidity of subcategories of rank $1$ modules is translated to compatibility of the corresponding Pl\"ucker coordinates (i.e.\ pairwise noncrossing of $k$-subsets, cf.\ Section \ref{S: finite}). An interesting aspect of this relation is that it affords a formal connection between two famous examples of a priori unrelated ADE classifications, providing a bridge between skew-symmetric cluster algebras of finite type and simple plane curve singularities. More precisely, it relates Grassmannian cluster algebras $\mathbb{C}[\mathrm{Gr}(k,n)]$ of finite type to the simple plane curve singularities $x^k = y^{n-k}$, which occur precisely in the cases $k =2$ or $n-2$ and $n \geq 4$ (type $A_{n-3}$), $k = 3$ or $n-3$ and $n = 6$ (type $D_4$), $k=3$ or $n-3$ and $n = 7$ (type $E_6$), and $k = 3$  or $n-3$ and $n = 8$ (type $E_8$). Here, the type indicates both cluster algebra type and singularity type respectively.

We extend the theory to the infinite rank setting, that is, we let $n$ go to infinity. For a fixed $k \geq 2$, a natural object to consider on the cluster algebra side is the ring
\[
 	\mathcal{A}_k = \frac{\mathbb{C}[p_I \mid I \subseteq \mathbb{Z}, |I| = k]}{\langle \text{Pl\"ucker relations} \rangle},
\]
cf.\ Section \ref{S:colimits} for details.
This is a cluster algebra of infinite rank in the sense of \cite{GG1}, and can be viewed as a colimit of Grassmannian cluster algebras $\mathbb{C}[\mathrm{Gr}(k,n)]$ in the category of rooted cluster algebras (introduced by Assem, Dupont, and Schiffler \cite{AssemDupontSchiffler}), as discussed in depth in \cite{Gratz1}. In fact, the ring $\mathcal{A}_k$ can be interpreted as the homogeneous coordinate ring of an infinite version of the Grassmannian under a generalised Pl\"ucker embedding. In the case $k=2$ this is the space of $2$-dimensional subspaces of a profinite-dimensional (topological) vector space under the Pl\"ucker embedding constructed by Groechenig in the appendix to \cite{GG1}. This point of view naturally extends to $k \geq 3$.

We construct an analogue of the Jensen, King, and Su Grassmannian cluster categories in this infinite setting: For a fixed $k \geq 2$, the Grassmannian category of infinite rank is defined to be the category of finitely generated $\mathbb{Z}$-graded maximal Cohen--Macaulay modules over the ring $R_k = \mathbb{C}[x,y]/(x^k)$, where $x$ is in degree $1$, and $y$ is in degree $-1$. From the point of view of the singularities, this is the natural category to consider\textemdash{}the singularity $x^k = 0$ is the limit of the singularities $x^k = y^{n-k}$ as $n$ goes to infinity, and the cyclic group actions yield a circle action in the limit, giving rise to the $\mathbb{Z}$-grading.

We find that this gives categorical companions embodying the combinatorics of the infinite Grassmannians. For instance, we  naturally rediscover the combinatorial description of Pl\"ucker coordinates through certain indecomposable objects of the Grassmannian category.

\begin{thma}[Theorem \ref{T:bijection}]\label{T:A}
There is a one-to-one correspondence between Pl\"ucker coordinates in $\mathcal{A}_k$ and generically free modules of rank $1$ in $\mathrm{MCM}_\mathbb{Z} R_k$.
\end{thma}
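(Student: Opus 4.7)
The plan is to construct an explicit generically free rank $1$ graded MCM module $M_I$ for each $k$-subset $I \subseteq \mathbb{Z}$ and then to show that every generically free rank $1$ graded MCM module over $R_k$ is isomorphic to some $M_I$, with distinct subsets yielding non-isomorphic modules. The desired bijection is then $p_I \leftrightarrow M_I$.

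First, I would work inside the graded localization $K \coloneqq R_k[y^{-1}] = \mathbb{C}[x, y^{\pm 1}]/(x^k)$, whose homogeneous components each have dimension $k$. Since $y \in R_k$ is a non-zero-divisor and the only associated prime of any MCM $R_k$-module is the unique minimal prime $(x)$ of $R_k$, multiplication by $y$ acts injectively on every MCM module. Consequently, a generically free rank $1$ graded MCM module $M$ embeds as a graded $R_k$-submodule of $K$ satisfying $M[y^{-1}] = K$, and conversely any finitely generated graded $R_k$-submodule of $K$ with that property is MCM of rank $1$. The classification problem thus reduces to describing these lattices.

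To parametrise such lattices, I would consider the $x$-adic filtration $M \supseteq M \cap xK \supseteq \cdots \supseteq M \cap x^{k-1}K \supseteq 0$. Each successive quotient embeds as a graded $\mathbb{C}[y]$-submodule of $x^aK/x^{a+1}K$, the latter being one-dimensional as a graded $\mathbb{C}[y^{\pm 1}]$-module with generator $x^a$ of degree $a$. The finitely generated graded $\mathbb{C}[y]$-submodules which span after inverting $y$ are exactly those generated by $x^a y^{-m_a}$ for some $m_a \in \mathbb{Z}$. Requiring $M$ to be stable under multiplication by $x$ forces $m_0 \leq m_1 \leq \cdots \leq m_{k-1}$, so the integers $i_{a+1} \coloneqq m_a + a$ form a strictly increasing sequence defining a $k$-subset $I(M) = \{i_1 < \cdots < i_k\} \subseteq \mathbb{Z}$. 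Conversely, given $I$, I would define $M_I$ as the graded $R_k$-submodule of $K$ generated by $x^a y^{a - i_{a+1}}$ for $a = 0, \ldots, k-1$, verify by direct computation that $I(M_I) = I$, and produce an isomorphism $M \cong M_{I(M)}$ by matching up the lowest-degree generator in each layer of the filtration.

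The main obstacle will be verifying that the integer tuple $(m_0, \ldots, m_{k-1})$ is a complete invariant of the isomorphism class of $M$ as a graded $R_k$-module, rather than merely of a chosen embedding into $K$, and reconciling the indexation $I(M)$ with the labelling of Pl\"ucker coordinates in $\mathcal{A}_k$ from Section \ref{S:colimits}. Compatibility with the colimit structure relating $\mathcal{A}_k$ to the finite-type Grassmannian cluster algebras of Jensen, King, and Su should dictate the precise normalisation.
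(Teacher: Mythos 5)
Your overall strategy is valid and genuinely different from the paper's. Where the paper shows that every generically free rank $1$ MCM module is isomorphic to a graded ideal containing a power of $y$ by passing through the dual (Lemma \ref{L:duality}, Lemma \ref{L:maximal free submodule}, Proposition \ref{L:rank001}) and then proves the monomial-ideal Lemma \ref{L:homogeneous ideals}, you embed $M$ directly as a graded lattice in $K = R_y$ using that $y$ is a non-zero-divisor on any MCM module. This is a legitimate shortcut: it replaces the duality argument by a single localization step, and your indexing $i_{a+1} = m_a + a$ reproduces precisely the subset $\unl(I)$ appearing in Theorem \ref{T:bijection}.

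The weight, however, is placed on the wrong step. The well-definedness worry you flag is the easy part: two embeddings $M \hookrightarrow K$ with $M_y \xrightarrow{\sim} K$ differ by a graded $K$-module automorphism of $K$, i.e.\ multiplication by a unit of $K_0 \cong \mathbb{C}[xy]/((xy)^k)$, which acts on each layer $x^aK/x^{a+1}K$ by a nonzero scalar and therefore fixes $(m_0, \ldots, m_{k-1})$; the same observation shows a graded isomorphism $M \cong M'$ forces $I(M) = I(M')$. The step that genuinely needs an argument is the claimed isomorphism $M \cong M_{I(M)}$: sending homogeneous lifts $\mu_a \in M \cap x^a K$ of the layer generators to the monomials $x^a y^{-m_a}$ is not obviously well-defined, because the $\mu_a$ need not satisfy the syzygies $x\mu_a = y^{m_{a+1}-m_a}\mu_{a+1}$ presenting $M_{I(M)}$. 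The way to close this gap is to prove the stronger statement that each monomial $x^a y^{-m_a}$ already lies in $M$, by downward induction on $a$: the case $a = k-1$ holds because the top layer is $M \cap x^{k-1}K$ itself; for $a < k-1$, any homogeneous lift $\mu_a = x^a y^{-m_a} + \sum_{b>a}c_b\, x^b y^{b-a-m_a}$ has each correction term $x^b y^{b-a-m_a} = y^{(b-a)+(m_b - m_a)} \cdot x^b y^{-m_b}$ lying in $M$ because the exponent $(b-a)+(m_b-m_a)$ is at least $1$ by your monotonicity $m_a \leq m_b$, so $x^a y^{-m_a} = \mu_a - \sum_{b > a} c_b\, x^b y^{b-a-m_a} \in M$. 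This yields $M = M_{I(M)}$ as submodules of $K$, which is an intrinsic re-derivation of the paper's Lemma \ref{L:homogeneous ideals}.
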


In order to prove this, we show that every generically free module of rank $1$ in $\mathrm{MCM}_\mathbb{Z} R_k$ arises as a syzygy of a finite dimensional module in $\mathrm{gr}R_k$. This allows us to reduce to the problem of classifying cofinite homogeneous ideals; we solve this problem explicitly by naturally constructing a Pl\"ucker coordinate from any such ideal.

Crucially, the correspondence from Theorem A 
is structure preserving, in the sense that it connects the concept of rigidity in $\mathrm{MCM}_\mathbb{Z} R_k$ with the concept of compatibility of Pl\"ucker coordinates, and that of noncrossing of $k$-subsets. 

\begin{thmb}
[Theorem \ref{T:compatible}]\label{T:B}
	Let $I$ and $J$ be generically free modules of rank $1$ in $\cC = \mathrm{MCM}_\mathbb{Z} R_k$ corresponding, under the bijection from Theorem 
	A,  to Pl\"ucker coordinates $p_I$ and $p_J$ respectively. Then $\Ext^1_{\cC}(I,J) = 0$ if and only if $p_I$ and $p_J$ are compatible.
\end{thmb}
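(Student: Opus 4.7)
The plan is to reduce Theorem B to an explicit combinatorial computation of $\dim_\C \Ext^1_\cC(I,J)$ for generically free rank $1$ modules $I, J$ in $\cC = \mathrm{MCM}_\Z R_k$. Under the bijection of Theorem A, compatibility of the Pl\"ucker coordinates $p_I$ and $p_J$ translates into the noncrossing condition on the corresponding $k$-subsets of $\Z$: there do not exist $a < b < c < d$ with $a, c$ and $b, d$ alternating between $I \setminus J$ and $J \setminus I$. So the task is to prove that $\Ext^1_\cC(I,J) = 0$ if and only if the subsets $I$ and $J$ are noncrossing.

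The starting point is the fact, used in the proof of Theorem A, that every generically free rank $1$ module in $\cC$ arises as a syzygy of an explicit finite-dimensional graded $R_k$-module whose combinatorial data is precisely the $k$-subset. Since $R_k = \C[x,y]/(x^k)$ is a hypersurface, each such module admits a $2$-periodic graded projective resolution arising from a matrix factorization of $x^k$, and the entries of this factorization are determined combinatorially by the subset. Using the resolution of $I$, the space $\Ext^1_\cC(I,J)$ is presented as the cohomology of $\Hom_\cC(-,J)$ applied to this periodic complex; since both the differential and the target module $J$ are encoded by combinatorial data, the whole computation reduces to a question about subsets of $\Z$.

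The core technical step is then to establish a combinatorial dimension formula for $\Ext^1_\cC(I,J)$ in terms of $I$ and $J$. The natural expectation is an explicit basis of $\Ext^1_\cC(I,J)$ indexed by the crossings of $I$ and $J$: each crossing $a < b < c < d$ produces a nonsplit extension obtained by gluing the two matrix factorizations along the crossing data, and conversely every class in $\Ext^1$ should split when no crossings are present. Constructing this bijection between crossings and a basis of $\Ext^1$ is the main obstacle, and is precisely the content of the dimension formula announced in the abstract; it presumably proceeds by induction on the combinatorial complexity (for instance, on $|I \triangle J|$), after a careful choice of normal form for the matrix factorizations.

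Once the dimension formula is established, the theorem is immediate: the right-hand side vanishes if and only if no crossings exist, which is exactly the noncrossing condition equivalent to compatibility of $p_I$ and $p_J$. As a consistency check one should verify that, upon restricting to subsets contained in $\{1, \ldots, n\}$ and passing to the cyclic quotient, the formula recovers the rigidity characterisation of Jensen--King--Su recalled in Section~\ref{S:JKS}.
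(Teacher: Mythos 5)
Your high-level strategy matches the paper's: reduce Theorem~B to an explicit dimension formula for $\Ext^1_{\cC}(I,J)$, and compute that dimension via the $2$-periodic matrix factorization resolutions of the ideals $I(\unl)$ over the hypersurface $R_k$. That much is correct and is indeed Section~4 of the paper.

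However, the core of your proposal --- ``an explicit basis of $\Ext^1_{\cC}(I,J)$ indexed by the crossings of $I$ and $J$'' --- is wrong, and this is not a minor gap. The paper's dimension formula is
\[
\cdim \Ext^1_{\cC}\bigl(I(\unl),I(\um)\bigr) = \alpha(\unl,\um) + \beta(\unl,\um) - k - |\unl \cap \um|,
\]
where $\alpha$ and $\beta$ count diagonals above and below certain staircase paths in a $(k\times k)$ grid encoding the weak/strict comparisons $\ell_i \le m_j$ and $\ell_i < m_j$. This quantity is \emph{not} the number of quadruples $a<b<c<d$ realizing a crossing. For instance with $k=3$, $\unl=(1,3,5)$, $\um=(2,4,6)$ one finds $\alpha=3$, $\beta=2$, $|\unl\cap\um|=0$, so the $\Ext^1$-space is $2$-dimensional, whereas there are several more crossing quadruples. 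So a gluing construction producing one independent extension per crossing, as you sketch, would overcount and does not exist; only the \emph{vanishing} of the formula is equivalent to the noncrossing condition. Relatedly, the reduction you anticipate ``by induction on $|I\triangle J|$'' is not how the paper proceeds: for disjoint subsets the formula $\alpha+\beta-k$ is established by a direct dimension count (via rank--nullity applied to $N^T$ and $M^T$ on the degree-zero parts of shifted copies of $J$, Lemmas~\ref{lem: kernel dimension} and~\ref{cor: image dimension}), without induction; the only induction is the reduction from general subsets to disjoint ones, which deletes one common element at a time and tracks how $\alpha$, $\beta$, $|\unl\cap\um|$ each drop by exactly one (Corollaries~\ref{lem: ca drop}, \ref{lem: cb drop}). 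In short: you have the right skeleton (matrix factorization presentation, dimension formula $\Rightarrow$ Theorem~B), but the conjectured shape of that formula and the proposed basis of $\Ext^1$ are incorrect, and the missing combinatorial tool --- the staircase paths of $\Alm$ and $\Blm$ --- is precisely what is needed to make the count come out right.
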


Theorem B 
is a direct consequence of a general formula for the dimension of the $\Ext^1$-space between any two given generically free modules of rank $1$ that we provide in this paper. To prove this formula, we employ the combinatorial tool of staircase paths in a $(k \times k)$-grid to extract the dimension of the $\Ext^1$-space between two such modules from the crossing pattern of the associated $k$-subsets $\unl$ and $\um$ of the corresponding Pl\"ucker coordinates, cf.\ Section \ref{S:tool}. A pair of staircase paths uniquely represents the crossing patterns of $\unl$ and $\um$, and yields two significant numbers: The number  $\alpha(\unl, \um)$ of diagonals strictly above one of the paths, and the number  $\beta(\unl, \um)$ of diagonals strictly below the other, for precise details see Definition \ref{Def:numberofdiagonals}.

\begin{thmc}[Theorem \ref{T: ext dimension}]\label{T:C}
Let $I$ and $J$ be two generically free modules of rank $1$ in $\cC = \mathrm{MCM}_\mathbb{Z} R_k$ corresponding, under the bijection from Theorem A
, to the Pl\"ucker coordinates $p_I$ and $p_J$ respectively, associated to the $k$-subsets $\unl$ and $\um$. Then
\begin{align*}
\cdim(\Ext^1_{\cC}(I,J))= \alpha(\unl, \um ) + \beta(\unl, \um ) - k - | \unl \cap \um |. 
\end{align*}
\end{thmc}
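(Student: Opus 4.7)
The plan is to proceed in two stages: first a homological reduction, then a combinatorial identification using staircase paths.

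\textbf{Stage 1: Homological reduction.} By Theorem A (and its proof), every generically free rank $1$ module $I = I_\unl$ in $\cC = \mathrm{MCM}_\mathbb{Z} R_k$ is realised as a first syzygy $\Omega M_\unl$ of a finite-dimensional graded $R_k$-module $M_\unl$. Since $R_k = \mathbb{C}[x,y]/(x^k)$ is a hypersurface, $\cC$ is Frobenius and its stable category is $2$-periodic; for MCM modules $I, J$ we therefore have the standard identification
\[
\Ext^1_\cC(I, J) \cong \underline{\Hom}_\cC(\Omega I, J).
\]
Equivalently, choosing a minimal graded projective cover $0 \to \Omega I \to P \to I \to 0$ and applying $\Hom_\cC(-, J)$ yields an exact sequence whose cokernel computes $\Ext^1_\cC(I,J)$. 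In the matrix factorisation picture, syzygies of generically free rank $1$ modules admit an explicit description over the polynomial ring $\C[x,y]$, which I would use to write down both $\Omega I$ and the projective cover in a form that is suited to subsequent counting.

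\textbf{Stage 2: Homomorphism count via staircase paths.} Using the natural $\C$-basis of $I_\unl$ indexed by the semi-infinite profile associated to the $k$-subset $\unl$, a graded $R_k$-homomorphism $I_\unl \to I_\um$ is determined by its values on the finitely many generators in each graded degree, subject to compatibility with the actions of $x$ and $y$. This reduces computing $\Hom_\cC(\Omega I_\unl, I_\um)$ and the image of $\Hom_\cC(P, I_\um)$ to a finite combinatorial count parametrised by pairs $(i, j) \in \{1, \ldots, k\}^2$, i.e., by cells of the $(k \times k)$-grid of Section \ref{S:tool}. The key observation is that the crossing pattern of $\unl$ and $\um$ is encoded exactly by the pair of staircase paths, so each cell of the grid falls into one of a handful of geometric positions relative to these paths. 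I expect the dimension count to decompose as follows: cells strictly above the first path contribute $\alpha(\unl, \um)$, cells strictly below the second path contribute $\beta(\unl, \um)$, homomorphisms factoring through the projective cover subtract $k$, and common entries indexed by $\unl \cap \um$ subtract $|\unl \cap \um|$.

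\textbf{Stage 3: Conclusion.} Assembling the contributions from Stage 2 gives
\[
\cdim(\Ext^1_{\cC}(I,J)) = \alpha(\unl, \um) + \beta(\unl, \um) - k - |\unl \cap \um|,
\]
as claimed. Theorem B should then follow as a corollary: compatibility of Pl\"ucker coordinates translates to $\alpha(\unl, \um) + \beta(\unl, \um) = k + |\unl \cap \um|$, characterised combinatorially by the noncrossing condition on the staircase paths.

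\textbf{Main obstacle.} The delicate part is Stage 2. It is easy to overcount in $\Hom$ and subtle to track exactly which maps factor through the projective cover after imposing the $\Z$-grading with $\deg x = 1$ and $\deg y = -1$; the correction terms $-k$ and $-|\unl \cap \um|$ are precisely the pieces that require care. Organising the count by cells of the $(k \times k)$-grid and invoking the staircase paths as a bookkeeping device is what makes the identification with $\alpha + \beta - k - |\unl \cap \um|$ tractable, but matching each contribution to the correct region of the grid—especially at diagonal cells on or between the paths—will be where most of the work sits.
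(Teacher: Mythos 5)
Your Stage 1 framing is fine and in fact close to what the paper does: the paper writes down an explicit $2$-periodic matrix factorisation $R^k \xrightarrow{M} R^k \xrightarrow{N} R^k \to I(\unl) \to 0$, applies $\grHom(-,J)$ to obtain a complex $\mathbf{J} \xrightarrow{N^T} \mathbf{J}(1) \xrightarrow{M^T} \mathbf{J}(k)$, and computes $\Ext^1(I,J)$ as $\ker(M^T)_0/\mathrm{im}(N^T)_0$. So far, same idea.

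The genuine gap is your Stage 2, and it is not merely a matter of detail. You posit that ``cells strictly above the first path contribute $\alpha$, cells strictly below the second path contribute $\beta$, homomorphisms factoring through the projective cover subtract $k$, common entries subtract $|\unl\cap\um|$.'' This decomposition is not correct: $\alpha(\unl,\um)$ and $\beta(\unl,\um)$ are not cell counts at all, but counts of whole diagonals, and the number of shaded cells above the staircase is generically far larger than $\alpha$. In the paper, the terms arise quite differently. By rank--nullity one has $\cdim\Ext^1 = \bigl(\cdim\mathbf{J}(1)_0 - \cdim\mathrm{im}(M^T)_0\bigr) - \bigl(\cdim\mathbf{J}_0 - \cdim\ker(N^T)_0\bigr)$. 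The shaded-cell counts of $\Alm$ and $\Blm$ give $\cdim\mathbf{J}_0$ and $\cdim\mathbf{J}(1)_0$, and their \emph{difference} produces the $-|\unl\cap\um|$ term. The number $\alpha$ appears as $\cdim\ker(N^T)_0$, because $N^T$ forces the coefficient matrix to be constant along lower diagonals and zero on upper ones, so a basis vector survives precisely for each \emph{full} upper diagonal in $\Alm$ above the staircase. The number $k-\beta$ appears as $\cdim\mathrm{im}(M^T)_0$, because $\mathrm{im}(M^T)$ is spanned by vectors indexed by coefficients $\upgamma_s = \sum_q b_{q,k+q-s}$, and $\upgamma_s$ can be nonzero iff at least one cell on the diagonal $D_s^-$ of $\Blm$ is above the staircase. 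The $-k$ is not a projective-factorisation correction; it is the $k$ in $k-\beta$. So your Stage 2 is both the unproved part of your argument (you acknowledge this in your ``main obstacle'' paragraph) and, as stated, not the right decomposition: carrying it out as written would not produce the claimed formula. To repair it you need to compute the kernel and image of $N^T$ and $M^T$ in degree zero explicitly, identify the coefficient-matrix constraints, and only then read off $\alpha$, $\beta$ and $|\unl\cap\um|$ as diagonal and cell-difference counts.
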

It is a direct consequence of this formula that $\Ext^1$ on generically free modules of rank $1$ is symmetric in its argument. This is not a coincidence: We provide an argument to show that the full subcategory of generically free maximal Cohen-Macaulay modules is stably 2-Calabi-Yau, using a result by Iyama and Takahashi \cite{IyamaTakahashi}. We are grateful to Osamu Iyama and Michael Wemyss for suggesting this should be the case. 

These connections provide a convincing argument for the study of Grassmannian categories of infinite rank as the appropriate categorical analogue to Grassmannian cluster algebras of infinite rank. As a further illustration, let us consider the $k = 2$ case. In the case of finite rank, the types $k=2$ and $n \geq 4$ form the simplest family of Grassmannian cluster categories. In particular, the corresponding singularities are of finite Cohen--Macaulay type (i.e.\ these Grassmannian cluster categories have finitely many indecomposable objects) and exhibit Dynkin type $A$ cluster combinatorics. In the limit, as $n$ goes to $\infty$, this mild behaviour survives: The ring $R_2= \mathbb{C}[x,y]/(x^2)$ has countable Cohen--Macaulay type, and indecomposable objects in the category $\mathrm{MCM}_\mathbb{Z} \mathbb{C}[x,y]/(x^2)$ can be classified via two-element subsets of $\mathbb{Z} \cup \{\infty\}$ (or, to use a geometric Dynkin type $A_{\infty}$ model, by arcs in an $\infty$-gon with one marked accumulation point). Furthermore, this particular Grassmannian category of infinite rank has cluster tilting subcategories, which we classify in work in progress \cite{ACFGS}, recovering the classification for the one-accumulation point case by Paquette and Y{\i}ld{\i}r{\i}m \cite{PaquetteYildirim} from a different perspective. 

The infinite rank $k =2$ case has been studied extensively in recent years from different perspectives, starting with the pioneering work by Holm and J{\o}rgensen \cite{HJ-cat}. They study the finite derived category $D^f_{dg}(\mathbb{C}[y])$, where $\mathbb{C}[y]$ is viewed as a differential graded algebra with trivial differential, and $y$ in cohomological degree $-1$, which they show exhibits cluster combinatorics of type $A_\infty$. In fact, the stable category of the subcategory of $\mathrm{MCM}_\mathbb{Z}\mathbb{C}[x,y]/(x^2)$ generated by generically free modules of rank $1$ is equivalent to $D^f_{dg}(\mathbb{C}[y])$. A different viewpoint on this category is given by a special case of the combinatorial construction of discrete cluster categories of type $A_\infty$ by Igusa and Todorov \cite{ITcyclic}. 
Recent work by Paquette and Y{\i}ld{\i}r{\i}m \cite{PaquetteYildirim} constructs a completion of the discrete cluster categories of type $A_\infty$. We note that in the one-accumulation point case, this completion coincides with the stable category of our Grassmannian category of infinite rank $\mathrm{MCM}_\mathbb{Z} \mathbb{C}[x,y]/(x^2)$.

While the story is a satisfyingly conclusive one for the $k = 2$ case, we note that the $k \geq 3$ case, which we treat in this paper simultaneously, is a different matter entirely: Already in the finite rank setting (bar a handful of exceptions), we are in wild Cohen--Macaulay type. As we let $n$ go to $\infty$, this wildness, unsurprisingly, survives, and a classification of indecomposable objects in the Grassmannian categories of infinite rank for $k \geq 3$ is out of reach. It is striking that it is still possible to classify all generically free rank $1$ Cohen--Macaulay modules via the combinatorially accessible tools from Theorem A 
, and to draw a natural connection to Grassmannian combinatorics.

\subsubsection*{Acknowledgements} 
This project started from the WINART2 (Women in Noncommutative Algebra and Representation Theory) workshop, and the authors would like to thank the organisers for this wonderful opportunity. 
They also thank the London Mathematical Society (WS-1718-03), the University of Leeds, the US National Science Foundation (MS 1900575), the Association for Women in Mathematics (DMS-1500481), and the Alfred P. Sloan foundation for supporting the workshop. Further thanks go to Alastair King for his keen interest in the project and comments on the first draft.

\section{Preliminaries}

\subsection{Grassmannian Cluster Algebras}

Grassmannian cluster categories are an additive categorification of Grassmannian cluster algebras, of which this section provides an overview.

\subsubsection{The Finite Rank Case}
\label{S: finite}
Coordinate rings of flag varieties provide an interesting source of cluster algebras. An important example thereof is the Grassmannian $\Gr(k,n)$ of $k$-subspaces of $\C^n$, viewed as a projective variety via the Pl\"ucker embedding. It was shown by Scott \cite{Scott} that its homogeneous coordinate ring $\C[\Gr(k,n)]$ carries a natural cluster algebra structure, with Pl\"ucker coordinates providing a subset of cluster variables, and exchange relations coming from Pl\"ucker relations.

Consider the Grassmannian $\Gr(k,n)$ of $k$-dimensional subspaces in $\bC^n$ as a projective variety via the Pl\"ucker embedding. Its homogeneous coordinate ring is the ring
\[
	\cA_{(k,n)} = \bC[x_I \mid I \subseteq \{1, \ldots, n\}, |I| = k] \big/ \cI_P \ ,
\]
where $\cI_P$ is the ideal generated by the Pl\"ucker relations, which are described as follows: For any two subsets $J,J' \subseteq \{1, \ldots, n\}$ with $|J| = k+1$ and $|J'| = k-1$, with $J = \{j_0, \ldots, j_{k}\}$ and $j_0 < \ldots < j_{k}$ we get a Pl\"ucker relation
\begin{eqnarray}\label{E:Plucker}
	\sum_{l=0}^{k}(-1)^lx_{J' \cup \{j_l\}}x_{J \setminus \{j_l\}}.
\end{eqnarray}
We call a subset $I \subseteq \{1, \ldots, n\}$, with $|I| = k$ a {\em $k$-subset}. The variables $x_I$ labelled by $k$-subsets are called {\em Pl\"ucker coordinates}. 

Given two $k$-subsets $I,J$ we say that $I$ and $J$ {\em cross}, if there exist $i_1,i_2 \in I\setminus J$ and $j_1,j_2 \in J \setminus I$ with
\[
    i_1 < j_1 < i_2 < j_2 \; \text{or} \; j_1 < i_1 < j_2 < i_2,
\]
and $I$ and $J$ are {\em noncrossing} if they do not cross.
Two Pl\"ucker coordinates $x_I$ and $x_J$ are {\em compatible}, if the $k$-subsets $I$ and $J$ are noncrossing.

Scott \cite{Scott} has shown that $\cA_{(k,n)}$ has the structure of a cluster algebra, where the Pl\"ucker coordinates form a subset of the cluster variables, and where maximal sets of mutually compatible Pl\"ucker coordinates provide examples of clusters in $\cA_{(k,n)}$.

\subsubsection{Colimits}\label{S:colimits}

A natural way of extending the cluster combinatorics of $\cA_{(k,n)}$ to an infinite setting is by considering the ring
\[
	\cA_{k} = \bC[x_I \mid I \subseteq \bZ, |I| = k] \big/ \cI_P,
\]
where $\cI_P$ is the ideal generated by relations of the form \eqref{E:Plucker}. Note that here, the labelling $k$-subsets are subsets of $\bZ$ of size $k$. 

Indeed, the ring $\cA_k$ can be endowed with the structure of an infinite rank cluster algebra in the sense of \cite{GG1} in uncountably infinitely many ways -- it requires us to choose some initial cluster, given, for example, by a maximal set of compatible Pl\"ucker coordinates. It was shown in \cite{Gratz1} that these cluster algebras of infinite rank can be interpreted as colimits of cluster algebras of finite rank in the category of rooted cluster algebras. Indeed, for a fixed $k$ we can write it as a colimit of the cluster algebras $\cA_{(k,n)}$ with appropriate fixed initial seeds, as illustrated in \cite{GG2}.

For $k = 2$, it was shown in the appendix to \cite{GG1} that the ring $\cA_k$ can be interpreted as the homogeneous coordinate ring of an infinite version of the Grassmannian under a generalisation of the Pl\"ucker embedding \textemdash{} this infinite version can be described as the space of $2$-dimensional subspaces of a profinite-dimensional (topological) vector space (equivalently, $2$-dimensional quotients of a countably infinite-dimensional vector space). This construction naturally extends to $k \geq 3$.

\subsection{Maximal Cohen--Macaulay Modules}
Let $R$ be a commutative ring. A finitely generated module $M$ is \emph{maximal Cohen--Macaulay (=MCM)} over $R$ if $\depth(M_\mf{p})=\dim(R_{\mf{p}})$ in $R_\mf{p}$ for all $\mf{p} \in \Spec(R)$. Note that if $R$ is local, then this property can simply be stated as $\depth(M)=\dim(R)$. If $R$ is a Gorenstein commutative ring (e.g., a hypersurface), a module $M$ is maximal Cohen--Macaulay if and only if $\Ext^i_R(M,R)=0$ for $i \neq 0$, see \cite{BuchweitzMCM}.  
Note that MCM-modules are precisely the Gorenstein projectives in case $R$ is Gorenstein. 

\subsection{Grassmannian Cluster Categories of Finite Rank}
\label{S:JKS}
In \cite{JKS16}, Jensen, King, and Su introduce an additive categorification of the natural cluster algebra structure on $\cA_{(k,n)}$. In particular, their {\em Grassmannian cluster categories} are Frobenius categories, with projective-injectives corresponding to the consecutive Pl\"ucker coordinates, i.e.\ the Pl\"ucker coordinates labelled by $k$-subsets of the form $\{i,i+1, \ldots, i+k-1\}$, where we calculate modulo $n$. This extends the cluster structure of classical cluster categories with Grassmannian combinatorics to include the coefficients of the cluster algebra  $\cA_{(k,n)}$. We briefly recall their construction here. The combinatorics of these categories has been extensively studied by Baur, King and Marsh in \cite{BKM}.

Let $k \in \Z_{\geq 2}$ and $n \geq k+2$. Consider the ring $S=\C[x,y]$. The group of $n$th roots of unity 
\[
	\mu_n = \{\zeta \in \C \mid \zeta^n = 1\}
\]
acts on $S$ via
\[
	x \mapsto \zeta x; \; y \mapsto \zeta^{-1}y.
\]
Taking the quotient by the $\mu_n$ semi-invariant function $x^k - y^{n-k}$ yields the ring
\[
	R_{(k,n)} = S/(x^k-y^{n-k}).
\]
The {\em Grassmannian cluster category} is the category
\[
	\CM_{\mu_n} R_{(k,n)}
\]
of $\mu_n$-equivariant maximal Cohen-Macaulay $R_{(k,n)}$-modules. Its rank $1$ modules are in one-to-one correspondence with the Pl\"ucker coordinates of $\cA_{(k,n)}$ and under this correspondence, vanishing $\Ext^1$ between two rank $1$ modules corresponds to the corresponding Pl\"ucker coordinates being compatible, cf.~\cite[Section 5]{JKS16}. In fact, the Grassmannian cluster category $\CM_{\mu_n} R_{(k,n)}$ is stably equivalent to the category $\mathrm{Sub}Q_k$ studied by Gei\ss, Leclerc, and Schr\"oer in \cite{GLSflagvarieties}. Therefore, it has cluster tilting subcategories, and, under the above correspondence, maximal sets of compatible Pl\"ucker coordinates provide cluster tilting subcategories.

\section{Grassmannian Categories of Infinite Rank}

In this section, we introduce the construction of infinite rank versions of Grassmannian cluster categories.

\subsection{Construction}

We fix $k \in \Z_{\geq 2}$. We generalise the construction of Grassmannian cluster categories to the infinite case, by letting $n$ go to infinity. Consider the action of the multiplicative group $\mathbb{G}_m$
(playing the role taken by $\mu_n$ in the finite case) on $S=\C[x,y]$ via
\[
	x \mapsto \zeta x; \; y \mapsto \zeta^{-1}y.
\]
Now, as a semi-invariant function, we take $x^k$. We may think of this as the infinite version of the function $x^k-y^{n-k}$ as $n$ goes to infinity; topologically the neighbourhood $(y^{n-k})$ tends to $0$ as $n$ goes to infinity. This yields the Gorenstein ring 
\[
	R_k \colonequals S/(x^k)
\]
which, when we have fixed a choice of $k$, we will often simply denote by $R$.
The category we are interested in is the category
\[
	\CM_{\mathbb{G}_m} R_k
\]
of $\mathbb{G}_m$-equivariant maximal Cohen--Macaulay $R_k$-modules. gThe character group of $\mathbb{G}_m$ is the group of integers $\Z$. This yields an equivalence of categories
\[
	\Modcat_{\mathbb{G}_m}R_k \cong \mathrm{gr} R_k
\] 
 between the category $\Modcat_{\mathbb{G}_m}R_k$ of finitely generated $\mathbb{G}_m$-equivariant $R_k$-modules and the category of finitely generated $\Z$-graded $R_k$-modules $\mathrm{gr} R_k$, where $R_k$ is viewed as a $\Z$-graded ring with $x$ in degree $1$, and $y$ in degree $-1$. This induces an equivalence of categories
\[
	\CM_{\mathbb{G}_m} R_k \cong \CM_{\Z} R_k,
\] 
where $\CM_{\Z} R_k$ is the category of graded Cohen--Macaulay $R_k$-modules, with the grading given above. Note that the objects in our category are graded $\CM$-modules over $R_k$ and the morphisms are graded morphisms of degree $0$. This means that for any morphism $f: M \xrightarrow{} N$ of modules in $\CM_{\Z}R_k$, one has $f(M_i) \subseteq N_i$, where $M_i$ is the $i$-th graded piece of $M$. We call $\CM_{\Z} R_k$ the {\em Grassmannian category of type $(k,\infty)$}, or just a {\em Grassmannian category of infinite rank}, if $k$ is clear from context.

\subsection{Generically Free Modules of Rank \texorpdfstring{$1$}{1}}

Fix $k \geq 2$ and set $R=\mathbb{C}[x,y]/(x^k)$ as above.
 Define $\cF$ to be the graded total ring of fractions of $R$, i.e.\ the ring $R$ localised at all homogeneous non-zero divisors:
 \[
 	\cF = R_y = \bC[x,y^{\pm}]/(x^k).
 \]

We consider $\cF$ as a graded ring, with the grading induced by the grading of $R$. 

\begin{definition}
    A module $M \in \mathrm{gr} R$ is {\em generically free of rank $n$} if $M\otimes_R \cF$ is a graded free $\cF$-module of rank $n$.
\end{definition}

Note that $\cF \cong R_{(x)}$, where $(x)$ is the graded minimal prime ideal of $R$. 

\begin{lemma}\label{L:maximal free submodule}
	Every generically free module $M$ of rank $n$ in $\mathrm{gr} R$ has a maximal free submodule $P$ of rank $n$ such that $M/P$ is finite dimensional. 
\end{lemma}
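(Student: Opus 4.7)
\medskip

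\textbf{Proof plan.} The plan is to produce $P$ by lifting a graded basis of $M\otimes_R\cF$ back into $M$, and to show that the obstruction to this being surjective is captured by the $y$-torsion of $M$, which is forced to be finite dimensional.

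First I would study the natural localisation map $\iota\colon M\to M\otimes_R\cF$, whose kernel $T$ consists of the $y$-torsion elements of $M$. Since $R=\mathbb{C}[x,y]/(x^k)$ is free of rank $k$ as a $\mathbb{C}[y]$-module, any finitely generated $R$-module is finitely generated as a $\mathbb{C}[y]$-module. As $\mathbb{C}[y]$ is a PID, the torsion submodule $T$ is therefore finite dimensional over $\mathbb{C}$.

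Next, I would pick a graded $\cF$-basis $e_1,\dots,e_n$ of $M\otimes_R\cF$; after multiplying by sufficiently high powers of the unit $y\in\cF$ (which only shifts degrees), I can arrange that each $e_i$ lies in the image $\iota(M)$. Choose graded preimages $\tilde e_1,\dots,\tilde e_n\in M$ and let $P\subseteq M$ be the graded $R$-submodule they generate. Any homogeneous relation $\sum r_i \tilde e_i=0$ in $M$ is preserved by $\iota$, giving $\sum r_i e_i=0$ in $M\otimes_R\cF$; since the $e_i$ form an $\cF$-basis this forces every $r_i=0$, so $P$ is free of rank $n$.

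It remains to verify that $M/P$ is finite dimensional. By construction, localising the exact sequence $P\hookrightarrow M\twoheadrightarrow M/P$ at $y$ gives $(M/P)\otimes_R\cF=0$, so every element of the finitely generated module $M/P$ is annihilated by some power of $y$. Taking the maximum over a finite generating set, $M/P$ is annihilated by $y^N$ for some $N$, i.e.\ it is a finitely generated module over $R/(y^N)=\mathbb{C}[x,y]/(x^k,y^N)$, which is itself finite dimensional over $\mathbb{C}$. Hence $M/P$ is finite dimensional.

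The only mild subtleties I anticipate are keeping everything graded (handled by noting that multiplication by $y$ and choices of homogeneous lifts preserve the $\mathbb{Z}$-grading) and justifying that a free submodule of rank equal to the generic rank is indeed ``maximal'' in the intended sense: no free submodule of $M$ can have rank exceeding $n$, because localising at $y$ would produce too many $\cF$-linearly independent elements in $M\otimes_R\cF$. The real work is bookkeeping; no deep input beyond the basic structure of $R$ over $\mathbb{C}[y]$ is required.
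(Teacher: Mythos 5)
Your proof is correct and takes a genuinely different (and arguably more constructive) route than the paper. The paper invokes Noetherianity to choose a submodule $P$ that is maximal among all free submodules under inclusion, then localises at $y$ and runs a contradiction argument: if $(M/P)\otimes_R\cF\neq 0$ it extracts an element whose $x$-powers survive, producing a free submodule of $M/P$ which pulls back to a strictly larger free submodule of $M$, contradicting maximality. You instead build $P$ directly by choosing a graded $\cF$-basis of $M\otimes_R\cF$, clearing denominators by powers of the unit $y$, and lifting along $\iota\colon M\to M\otimes_R\cF$; the injectivity of $R\hookrightarrow\cF$ (because $y$ is a non-zero divisor) forces the lifts to generate a free module of rank $n$, and surjectivity of $P_y\to M_y$ gives $(M/P)_y=0$, after which the finite-dimensionality of $M/P$ follows exactly as in the paper. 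Two small remarks. First, the opening paragraph about the $y$-torsion kernel $T$ being finite dimensional, while true, is never used afterwards and can be deleted. Second, the word ``maximal'' in the lemma is intended as ``maximal under inclusion,'' and your construction does not automatically deliver this: your $P$ could sit properly inside a larger free submodule (e.g.\ if you cleared with an unnecessarily high power of $y$). To match the statement literally, one should enlarge your $P$ to a maximal free submodule containing it (which exists by Noetherianity); its rank is still $n$ by your injectivity-into-$\cF^n$ argument, and the new cokernel is a quotient of $M/P$, hence still finite dimensional. This is a cosmetic point, since maximality is never exploited in the paper's later applications of the lemma.
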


\begin{proof}
	Take $P$ to be a maximal free submodule of $M$. This exists, since $R$ is Noetherian, and $M$ is finitely generated. First, we see that $M/P$ is also generically free, by tensoring the short exact sequence
	\[
		0 \to P \to M \to M/P \to 0.
	\]
	with $\mathcal{F}$, which yields the 
 short exact sequence
	\[
		0 \to \mathcal{F}^m \to\mathcal{F}^n \to M/P \otimes \mathcal{F} \cong (M/P)_y \to 0,
	\]
	for some $m,n \geq 0$. This sequence splits, since $\mathcal{F}$ is graded self-injective, which can be seen using Baer's criterion. Therefore $(M/P)_y$ is graded free. Next, we show that in fact $M/P \otimes \mathcal{F} \cong (M/P)_y = 0$, which implies that $M/P$ is finite dimensional and $P$ is of rank $m = n$. Indeed, if we have $(M/P)_y = 0$, then $M/P$ is annihilated by some power of $y$, and hence is isomorphic to some quotient of some power of $R$, say $(R/(y^l))^m = (\mathbb{C}[x,y]/(x^k,y^l))^m$, which is finite dimensional.
	
	To show $(M/P)_y=0$, assume as a contradiction that we have $(M/P)_y \neq 0$. Then there exists a free submodule of $M/P$: Pick a non-zero divisor $0 \neq \frac{z}{y^l} \in (M/P)_y$. Since $(M/P)_y$ is free, we have that $x^i\frac{z}{y^l} \neq 0$ for $0 \leq i < k$. It follows that $x^iz \neq 0$ for all $0 \leq i < k$, so the annihilator of $z \in M/P$ vanishes, and $z$ generates a rank $1$ free submodule $P'$ of $M/P$. 
	We get a diagram
	\[
		\xymatrix{P \ar[r] \ar@{=}[d]& Q \ar@{->>}[r] \ar@{>->}[d] & P' \ar@{>->} [d] \\
				P \ar[r] & M \ar@{->>}[r] & M/P}
	\]
	where the right-hand square is a pull-back. The top sequence splits, and we get that $Q \cong P \oplus P'$ is a free submodule of $M$, contradicting the maximality of $P$. 
\end{proof}

In the following, we denote the graded $\Hom$ by $\grHom$, and graded $\Ext^1$ by $\grExt$. Throughout, $M(j)$ denotes the graded shift of $M$, i.e.\ $M(j)_i = M_{i+j}$.

\begin{lemma}\label{L:duality}
	If $M$ is a generically free module of rank $n$ in $\mathrm{MCM}_{\bZ}R$, its dual 
	$M^* = \grHom_R(M,R)$ is also a generically free module of rank $n$ in $\mathrm{MCM}_{\bZ}R$.
\end{lemma}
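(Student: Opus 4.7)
The plan is to prove the two required properties of $M^*$ separately: generic freeness of rank $n$ first, then the MCM property.

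For generic freeness, I would use that $M$ is finitely presented (being finitely generated over the Noetherian ring $R$), so graded $\Hom$ commutes with localization at the homogeneous non-zero-divisor $y$. This gives
\[ M^* \otimes_R \cF \;\cong\; \grHom_\cF(M \otimes_R \cF,\, \cF) \;\cong\; \grHom_\cF(\cF^n,\, \cF) \;\cong\; \cF^n, \]
establishing that $M^*$ is generically free of rank $n$.

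For the MCM property, I would appeal to the classical fact that duality $\grHom_R(-, R)$ is an exact equivalence on the category of MCM modules over a Gorenstein ring; see, for instance, \cite{BuchweitzMCM}. This can be made concrete in our hypersurface setting via Eisenbud's theorem: an MCM module $M$ over $R = S/(x^k)$ (with $S = \mathbb{C}[x,y]$) admits a $2$-periodic minimal graded free resolution coming from a matrix factorization $(A, B)$ of $x^k$, i.e., homogeneous matrices with $AB = BA = x^k I$. Dualizing yields a complex that is exact (since $\grExt^i_R(M, R) = 0$ for $i > 0$ by the MCM hypothesis) and is itself a $2$-periodic resolution of $M^*$ governed by the dual matrix factorization $(A^T, B^T)$. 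Hence $M^*$ admits a matrix factorization, from which $\grExt^i_R(M^*, R) = 0$ for $i > 0$, i.e., $M^*$ is MCM.

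The main obstacle I anticipate is tracking the $\bZ$-grading: one must verify that the graded shifts on the free modules and their duals align so that $(A^T, B^T)$ indeed yields $M^*$ as an object of $\mathrm{MCM}_\bZ R$ rather than merely as an ungraded module. This is routine bookkeeping once consistent grading conventions on the dualizing module are fixed, and poses no fundamental difficulty.
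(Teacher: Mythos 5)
Your argument is correct and follows essentially the same route as the paper: for generic freeness, both use that $\grHom$ commutes with localization at the non-zero-divisor $y$ for finitely presented modules (the paper cites Matsumura, Thm.~7.11), yielding $M^*\otimes_R\cF\cong\grHom_\cF(\cF^n,\cF)\cong\cF^n$; and for the MCM property, both rest on the fact that $\grHom_R(-,R)$ preserves MCM modules over a Gorenstein ring (the paper cites Buchweitz, Lemma 4.2.2(iii)). Your extra unpacking of the latter fact via matrix factorizations is correct but optional — the paper simply invokes the Buchweitz citation and does not need the $2$-periodicity argument.
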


\begin{proof} By \cite[Lemma 4.2.2 (iii)]{BuchweitzMCM}, the dual $M^*$ of the MCM $M$ is again MCM. Furthermore we have
	\[
		M^* \otimes \mathcal{F} = {\grHom}_R(M,R) \otimes \mathcal{F} \cong {\grHom}_{\mathcal{F}}(M \otimes \mathcal{F}, \mathcal{F}) \cong {\grHom}_{\mathcal{F}}(\mathcal{F}^n, \mathcal{F}) \cong \mathcal{F}^n,
	\]
	where the first equivalence follows from \cite[Thm.~ 7.11]{Matsumura}.
\end{proof}

\begin{proposition}\label{L:syzygy of fd}
	Every generically free module $M$ in $\mathrm{MCM}_\bZ R$ is a syzygy of a finite dimensional module in $\mathrm{gr} R$. More precisely, we have a short exact sequence of the form
\[
 	0 \to M \to \bigoplus_{i=1}^m R(-n_i) \to N \to 0,
 \]
 where $m$ is the rank of $M$ and $N$ is finite dimensional.
\end{proposition}

\begin{proof}
Assume $M$ is a generically free module of rank $m$ in $\mathrm{MCM}_\mathbb{Z}R$. Note that $m >0$, since $M$ is MCM. If $M$ is free, we are done. So assume that $M$ is not free. By Lemma \ref{L:duality}, the dual $M^*$ is also generically free, and by Lemma \ref{L:maximal free submodule}, it has a maximal free submodule $P$ of rank $m$ such that $M^*/P$ is finite dimensional. So we have $P \cong \bigoplus_{i =1}^m R(n_i)$, and $n_i \in \mathbb{Z}$.
This yields a short exact sequence
	\[
		0 \to \bigoplus_{i=1}^m R(n_i) \to M^* \to M^*/P \to 0,
	\]
and applying the graded ${\grHom}(-,R)$ 
yields the
 short exact sequence
 \begin{eqnarray}\label{E:desired sequence}
 	0 \to M \to \bigoplus_{i=1}^m R(-n_i) \to {\grExt}(M^*/P,R) \to 0,
 \end{eqnarray}
since $(M^*/P)^* = {\grHom}(M^*/P,R) = 0$ (as $M^*/P$ is finite dimensional, and thus annihilated by a power of $y$), and ${\grExt}(M^*,R) = 0$ (as $M^*$ is MCM). Note that $M^{**} \cong M$, as MCM modules over a Gorenstein ring are reflexive by \cite[Lemma 4.2.2 (iii)]{BuchweitzMCM}.
Again, since $M^*/P$ is annihilated by some power of $y$, it is a graded $R/(y^i)$-module for some $i \in \mathbb{N}$. By \cite[Corollary 3.3.7]{Weibel} ${\grExt}(M^*/P,R)$ is a graded $R/(y^i)$-module as well. Furthermore, since both $M^*/P$ and $R$ are finitely generated graded $R$-modules, so is ${\grExt}(M^*/P,R)$. To summarise, ${\grExt}(M^*/P,R)$ is a finitely generated graded $R$-module, which is annihilated by $y^i$, and hence it is finite dimensional. Thus (\ref{E:desired sequence}) is the desired sequence.

\end{proof}

\begin{proposition}\label{L:rank001}
	A graded module $I$ in $\mathrm{MCM}_{\bZ}R$ is generically free of rank $1$ if and only if $I$ is isomorphic to a graded ideal containing a power of $y$.
\end{proposition}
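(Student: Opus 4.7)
The plan is to prove the two directions separately. The ``if'' direction reduces to a short computation for the generic rank plus a standard $\grExt$-vanishing argument exploiting that $R$ is Gorenstein; the ``only if'' direction follows almost directly from Proposition \ref{L:syzygy of fd} in the rank-one case.

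For the ``if'' direction, I would start with a graded ideal $I \subseteq R$ containing some power $y^l$. Generic freeness of rank one is immediate: localizing at $y$ turns $y^l$ into a unit, so $I_y \subseteq R_y = \cF$ contains $1$ and therefore equals $\cF$, which is graded free of rank one. To show $I$ is MCM, I would apply $\grHom(-, R)$ to the short exact sequence $0 \to I \to R \to R/I \to 0$ and combine the following three ingredients: (i) $R/I$ is finite dimensional, because it is annihilated by $y^l$ and hence a finitely generated module over the Artinian ring $R/(y^l) \cong \bC[x,y]/(x^k, y^l)$; (ii) $R$ is Gorenstein of Krull dimension one, so $\grExt^i(R/I, R) = 0$ for $i \geq 2$, and $\grHom(R/I, R) = 0$ since $y^l$ is a non-zero divisor on $R$; (iii) $\grExt^i(R, R) = 0$ for $i \geq 1$. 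Reading off the long exact sequence then forces $\grExt^i(I, R) = 0$ for all $i \geq 1$, so $I$ is MCM by the $\Ext$-characterization of MCM modules over Gorenstein rings.

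For the ``only if'' direction, assume $I$ is generically free of rank one in $\mathrm{MCM}_{\bZ}R$. Specializing the proof of Proposition \ref{L:syzygy of fd} to $m = 1$ produces a short exact sequence
\[
0 \to I \to R(-n_1) \to N \to 0
\]
with $N$ finite dimensional. After shifting the grading by $n_1$, this exhibits $I(n_1)$ as a graded submodule of $R$, i.e.\ a graded ideal of $R$. Finite dimensionality of $N$ implies $N$ is annihilated by some $y^l$, so $y^l \cdot R(-n_1)$ lies in the image of $I$; translating through the shift, $y^l$ lies in the ideal $I(n_1) \subseteq R$. Hence, up to graded isomorphism, $I$ is a graded ideal of $R$ containing a power of $y$.

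The main obstacle I anticipate is the MCM verification in the ``if'' direction, where one must carefully invoke the finite graded injective dimension of the Gorenstein ring $R$ to annihilate the higher $\grExt^i(R/I, R)$. Everything else is either a short direct computation or an immediate consequence of the results developed earlier in this section.
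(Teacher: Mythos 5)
Your proof is correct and follows essentially the same route as the paper's. The one place you genuinely deviate, and for the better, is the ``if'' direction: the paper derives generic freeness by tensoring a short exact sequence $0 \to I \to R(n) \to M \to 0$ with $\cF$ and arguing that $M_y = 0$ via a descending-chain contradiction, whereas you observe directly that $I_y$ is an ideal of $\cF$ containing the unit $y^l$, hence equals $\cF$ --- cleaner and shorter. You also supply a verification, via the long exact sequence in $\grExt(-,R)$ together with the Gorenstein characterization $\grExt^{\geq 1}(-,R) = 0$, that any graded ideal containing a power of $y$ is automatically MCM; the paper skips this because its hypothesis already places $I$ in $\mathrm{MCM}_{\bZ}R$, so your check is superfluous for the statement as written but does establish a mildly stronger fact. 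Your ``only if'' direction is, as you note, the $m=1$ specialization of the construction behind Proposition~\ref{L:syzygy of fd}, which is precisely what the paper does (it dualizes, extracts a maximal free submodule via Lemma~\ref{L:maximal free submodule}, and dualizes back). One small caveat you handle correctly but should keep in view: the short exact sequence $0 \to I \to R(-n_1) \to N \to 0$ exhibits $I$ as a graded ideal only \emph{up to a grading shift}, exactly as in the paper's conclusion $I \cong J(-i)$ for some ideal $J$; so ``graded ideal'' in the Proposition must be read up to graded isomorphism, which your phrasing makes explicit.
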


\begin{proof}
    If $I$ is a graded ideal of $R$ containing a power of $y$, then there is an exact sequence
    \[
    0 \to I \to R(n) \to M \to 0
    \]
    where $n \in \mathbb{Z}$, and $M$ is finite dimensional. Note that tensoring with $\cF$ is precisely localisation at $y$ and thus is exact. Thus we obtain the short exact sequence
	\[
		0 \to I \otimes_R \cF \to \cF(n) \to M \otimes_R \cF \to 0.
	\]  
	If $M \otimes_R \cF \cong M_y \neq 0$, then no power of $y$ acts trivially on $M$, and we have an infinite descending chain of ideals $M \supset yM \supset y^2M \supset \ldots$, contradicting $M$ being finite dimensional. Therefore, the last term in the sequence vanishes and so $I$ is generically free of rank 1.
	
	Now assume that $I$ is a generically free module of rank 1 in $\mathrm{MCM}_\mathbb{Z}R$. 
By Proposition \ref{L:syzygy of fd}, we have a short exact sequence
 \[
    0 \to I \to R(-i) \to N \to 0
 \]
  for some $i \in \mathbb{Z}$ and finite dimensional $N$. Therefore, $I \cong J(-i)$ for some ideal $J$, and $I$ is cofinite, and hence contains a power of $y$.

\end{proof}

\subsection{Bijection with Pl\"ucker Coordinates}
We can easily describe the graded ideals containing a power of $y$, thanks to the following.

\begin{lemma}\label{L:homogeneous ideals} Every homogeneous ideal $I \subseteq R$ can be generated by monomials.
\end{lemma}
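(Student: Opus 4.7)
The strategy is to exploit the grading directly. Since $I$ is homogeneous we may decompose $I = \bigoplus_{d \in \mathbb{Z}} I_d$ with $I_d = I \cap R_d$, and it suffices to show that each finite-dimensional piece $I_d$ is already spanned by monomials as a $\mathbb{C}$-vector space; then the union of these monomial bases yields a generating set of $I$ as an ideal (indeed, a $\mathbb{C}$-linear basis of $I$).

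The key observation is that each graded piece $R_d$ is a cyclic module over the degree-zero subring $R_0$. Writing $t := xy$, one finds that $R_0 = \mathbb{C}[t]/(t^k)$; for $d \geq 0$ the component $R_d$ has monomial basis $\{x^{d+i}y^i : 0 \leq i \leq k-1-d\}$ and is cyclically generated over $R_0$ by $x^d$, with multiplication by $t$ acting as a shift along this basis and eventually annihilating it. The case $d \leq 0$ is analogous, with cyclic generator $y^{-d}$ of annihilator $(t^k)$. Since $R_0 \cdot I_d \subseteq I_d$, each $I_d$ is automatically an $R_0$-submodule of $R_d$.

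Now $R_0 = \mathbb{C}[t]/(t^k)$ is a local Artinian principal ideal ring whose only ideals are the $(t^j)$, so the $R_0$-submodules of any cyclic module $R_0/(t^r)$ are exactly the tails $(t^j)/(t^r)$ with $0 \leq j \leq r$. Translating back via the cyclic identification above, $I_d$ must be the span of a terminal segment of the monomial basis of $R_d$, and hence is monomial. There is no serious obstacle here; the only point worth highlighting is that the chosen $\mathbb{Z}$-grading (with $\deg y = -1$) makes $R_0$ a nontrivial local ring rather than just $\mathbb{C}$, and it is precisely this cyclic-over-a-truncated-polynomial-ring structure that forces every submodule of every $R_d$ to be visibly monomial.
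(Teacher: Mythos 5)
Your proof is correct, and it takes a genuinely different route from the one in the paper. The paper's proof is an elementary, hands-on manipulation: starting from a single homogeneous element $f_m = a_0 x^m + a_1 x^{m+1}y + \cdots$ of $I$, it multiplies by appropriate powers of $x$ to kill the tail, subtracts off the resulting monomial, and iterates, showing directly that $(f_m)$ coincides with the principal ideal generated by the single monomial $x^{m+p}y^p$ indexed by the smallest nonvanishing coefficient. Your argument instead works one graded slice at a time and leans on structure theory: you identify $R_0 = \mathbb{C}[t]/(t^k)$ with $t = xy$, observe that every homogeneous component $R_d$ is a cyclic $R_0$-module with monomial basis translated from the powers of $t$, and invoke the classification of submodules of cyclic modules over this local Artinian principal ideal ring to conclude that each $I_d$ is a terminal segment of the monomial basis. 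The paper's route is self-contained and requires nothing beyond explicit polynomial arithmetic; your route is shorter and arguably more transparent once one sees that the degree-zero subring is a nontrivial truncated polynomial ring (which is indeed the crux of why the lemma is not completely trivial), at the cost of appealing to the submodule structure over $R_0$. Both proofs are complete and correct, and they even produce the same explicit description of any homogeneous ideal as a staircase of monomials.
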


\begin{proof}
Let $f_m=a_0 x^m + a_1 x^{m+1} y + \cdots + a_{k-m-1}x^{k-1}y^{k-1-m}$ be a homogeneous polynomial contained in $I$ and notice that we must have $m<k$ since $x^k=0$. Note also that $m$ may be negative, in which case we assume that all $a_p =0$ for $p < -m$. 

Let $p$ the smallest index such that $a_p \neq 0$. We will show by induction that $x^{k-i}y^{k-m-i} \in I$ for $i=1, \dots, k-m-p$, or in other words, the ideal generated by $f_m$ is the same as the ideal generated by $x^{m+p}y^p$, and thus $I$ is generated by monomials.

For the $i=1$ case, multiply $f_m$ by $x^sy^s$ where $s$ satisfies $m+p+s=k-1$ and notice that $s \geq 0$ by the assumption that $a_p \neq 0$. Thus, 
\[
x^sy^sf_m= a_px^{k-1}y^{k-1-m}+ x^k( \dots) = a_px^{k-1}y^{k-1-m}
\]
belongs to $I$, and hence $x^{k-1}y^{k-1-m} \in I$.

Now for the inductive step, assume $1 < i \leq k-m-p$ and $x^{k-j}y^{k-m-j} \in I$ for all $1 \leq j \leq i-1$. Multiply $f_m$ by $x^sy^s$ where $s$ satisfies $m+p+s=k-i$ and notice that $s \geq 0$ by the assumption $i \leq k-m-p$. Thus,
\begin{align*}
x^sy^sf_m &= a_px^{k-i}y^{k-i-m}+ a_{p+1}x^{k-i+1}y^{k-i-m+1} + \dots + a_{p+i-1}x^{k-1}y^{k-1-m} +x^k( \dots) \\
&= a_px^{k-i}y^{k-i-m} + a_{p+1}x^{k-i+1}y^{k-i-m+1} + \dots + a_{p+i-1}x^{k-1}y^{k-1-m}
\end{align*}
also belongs to $I$. However, by the inductive hypothesis,
\[
a_{p+1}x^{k-i+1}y^{k-i-m+1} + \dots + a_{p+i-1}x^{k-1}y^{k-1-m}
\]
belongs to $I$ and therefore so does $x^{k-i}y^{k-i-m}$ as required.
\end{proof}

Combining Proposition \ref{L:rank001} and Lemma \ref{L:homogeneous ideals}, we are able to prove the following. 

\begin{theorem}\label{T:homogeneous ideals}
	Let $I$ be in $\mathrm{MCM}_{\mathbb{Z}}R$. Then $I$ is a generically free module of rank $1$ if and only if
	\[
		I \cong (x^{k-1}, x^{k-2}y^{i_1}, x^{k-3}y^{i_2}, \ldots, xy^{i_{k-2}}, y^{i_{k-1}})(i_k) 
	\]
	for some $ 0 \leq i_1 \leq i_2 \leq \ldots \leq i_{k-1}$, and some $i_k \in \bZ$. 
\end{theorem}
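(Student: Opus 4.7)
My plan is to deduce this directly by combining the two preceding results. From Proposition \ref{L:rank001}, any generically free rank $1$ module $I$ in $\mathrm{MCM}_{\mathbb{Z}}R$ is isomorphic, up to a graded shift, to a graded ideal $J \subseteq R$ that contains a power of $y$. By Lemma \ref{L:homogeneous ideals}, $J$ admits a monomial generating set, with generators necessarily of the form $x^a y^b$ for $0 \le a \le k-1$ (since $x^k = 0$ in $R$).

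The key step is then to put this monomial ideal into the canonical shape in the statement. For each $a \in \{0,1,\dots,k-1\}$ I would set $c_a = \min\{b \ge 0 : x^a y^b \in J\}$. Because $J$ is closed under multiplication by $x$, the sequence must satisfy $c_{k-1} \le c_{k-2} \le \cdots \le c_0$, and because $J$ contains a power of $y$, all the $c_a$ are finite. A short argument shows that the $k$ monomials $x^a y^{c_a}$ already generate $J$: any other monomial generator $x^{a'} y^{b'} \in J$ satisfies $b' \ge c_{a'}$, so is a multiple of $x^{a'} y^{c_{a'}}$. Pulling out the common factor $y^{c_{k-1}}$, which is a nonzerodivisor on $R$ and hence yields a graded isomorphism, reduces the lowest $y$-exponent to zero and gives
\[
J \cong (x^{k-1},\, x^{k-2} y^{i_1},\, \dots,\, y^{i_{k-1}})(c_{k-1})
\]
with $i_j \colonequals c_{k-1-j} - c_{k-1}$ and $0 \le i_1 \le \cdots \le i_{k-1}$. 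I would then absorb the two grading shifts (from Proposition \ref{L:rank001} and from pulling out $y^{c_{k-1}}$) into a single integer $i_k \in \mathbb{Z}$.

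The converse direction is essentially free: any ideal of the displayed form contains $y^{i_{k-1}}$, so Proposition \ref{L:rank001} immediately implies it is a generically free rank $1$ object of $\mathrm{MCM}_{\mathbb{Z}}R$, and a graded shift preserves both properties. I expect the only mildly delicate point to be the combinatorial step — checking that the reduced list of $k$ monomials $x^a y^{c_a}$ already generates $J$ and that the $c_a$ are non-increasing — both of which follow cleanly from the ideal being closed under multiplication by $x$.
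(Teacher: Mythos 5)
Your argument is correct and follows essentially the paper's own route: the paper proves Theorem~\ref{T:homogeneous ideals} with the one-line ``Combining Proposition~\ref{L:rank001} and Lemma~\ref{L:homogeneous ideals},'' and your write-up merely spells out the elementary bookkeeping (extracting the non-increasing minimal exponents $c_a$, discarding redundant generators, and pulling out $y^{c_{k-1}}$) that the authors leave implicit.
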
 

\begin{proof}
Recall from Proposition \ref{L:rank001} that a graded module $I$ in $\mathrm{MCM}_{\mathbb{Z}}R$ is generically free of rank 1 if and only if $I$ is isomorphic to a graded ideal containing a power of $y$. The ‘if’ direction follows immediately. So now suppose that $I$ is a graded module in $\mathrm{MCM}_{\mathbb{Z}}R$ which is generically free of rank 1. By Proposition \ref{L:rank001}, $I$ is isomorphic to a homogeneous ideal of $R$ containing a power of $y$ and by Lemma \ref{L:homogeneous ideals}, this ideal must be of the form 
\begin{align}
(x^{k-1}y^{i_0}, x^{k-2}y^{i_1}, x^{k-3}y^{i_2}, \dots, xy^{i_{k-2}}, y^{i_{k-1}})(i_k) \label{e: ideal}
\end{align}
where $0 \leq i_0 \leq i_1 \leq \dots \leq i_{k-1}$ and $i_k \in \mathbb{Z}$. However, notice that since $y$ is a non-zero divisor, then as graded $R$-modules, the ideal in \eqref{e: ideal}, and hence also $I$, is isomorphic to 
\begin{align*}
(x^{k-1}, x^{k-2}y^{i_1-i_0}, x^{k-3}y^{i_2-i_0}, \dots, xy^{i_{k-2}-i_0}, y^{i_{k-1}-i_0})(i_k+i_0)
\end{align*}
as required.
\end{proof}

We can depict the generically free module of of rank $1$ in $\mathrm{MCM}_{\mathbb{Z}}R$ from Theorem \ref{T:homogeneous ideals} as follows.

\begin{figure}[H]
\begin{center}
  \begin{tikzpicture}[scale=0.67]
 \draw (-10,4.5) -- (9.6, 4.5) -- (9.6,3.5) -- (-10,3.5);
    \node at (8.6,5) {\tiny{$-i_k+k-1$}};
 	\node at (8.6,4) {$x^{k-1}$};
    \node at (6.6,5) {\tiny{$-i_k+k-2$}};
 	\node at (6.6,4) {$x^{k-1}y$};
    \node at (5.15,5) {\tiny{$\cdots$}};
 	\node at (5.15, 4) {$\cdots$};
 	\node at (3.1,4) {$x^{k-1}y^{i_1+1}$};
 	\node at (3.1,3) {$x^{k-2}y^{i_1}$};
 	\node at (0,4) {$x^{k-1}y^{i_1+2}$};
 	\node at (-3.55,4) {$\cdots$};
 	\node at (-3.55,3) {$\cdots$};
 	\node at (0,3) {$x^{k-2}y^{i_1+1}$};
    \node at (3.1,5) {\tiny{$-i_k-i_1+k-2$}}; 
    \node at (0,5) {\tiny{$-i_k-i_1+k-1$}}; 
 	\node at (-3.5,1) {$xy^{i_{k-2}}$};
    \node at (-3.55,5) {\tiny{$-i_k-i_{k-2}+1$}};
 	\node at (-5.1,1) {$\cdots$};
	\node at (-5.15,5) {\tiny{$\cdots$}};
 	\node at (-1.85,3) {$\cdots$};
 	\node at (-1.85,4) {$\cdots$};
	\node at (-1.9,5) {\tiny{$\cdots$}};
 	\node at (-6.7,1) {$xy^{i_{k-1}+1}$};
 	\node at (-6.7,0) {$y^{i_{k-1}}$};
    \node at (-6.7,5) {\tiny{$-i_k-i_{k-1}$}};
    \node at (-8.7, 5) {\small{$\deg_I$}:};
    \node at (-8.7, 4) {$\cdots$};
    \node at (-8.7, 3) {$\cdots$};
    \node at (-8.7, 1) {$\cdots$};
    \node at (-8.7, 0) {$\cdots$};
 \draw (-7.7,4.5)--(-7.7,-.5);
 \draw[dotted] (-7.7, 5.5) -- (-7.7, 4.5);
 \draw (-5.7,4.5) -- (-5.7,-.5) -- (-10,-.5);
 \draw[dotted] (-5.7, 5.5) -- (-5.7, 4.5);
\draw (-4.7,4.5) -- (-4.7,.5);
 \draw[dotted](-4.7, 4.5) -- (-4.7, 5.5);
 \draw(-2.4, 4.5) -- (-2.4, .5);
 \draw[dotted](-2.4, 4.5) -- (-2.4, 5.5);
 \draw (-10,1.5) -- (-2.4,1.5) -- (-2.4,.5)--(-10,.5);
 \draw(-1.4, 4.5) -- (-1.4, 1.5);
 \draw(-2.4, 1.5) -- (-1.4, 1.5);
 \draw[dotted](-1.4, 4.5) -- (-1.4, 5.5);
 \draw (1.6,4.5)--(1.6,2.5);
 \draw[dotted] (1.6, 5.5) --(1.6, 4.5);
 	\draw (4.6,4.5)--(4.6,2.5)--(-10,2.5);
  \draw[dotted] (4.6, 5.5) -- (4.6, 4.5);
 \draw (5.6,4.5) -- (5.6,3.5);
 \draw[dotted] (5.6,4.5) -- (5.6,5.5);
 	\draw (7.6,4.5) -- (7.6,3.5);
 	\draw[dotted] (7.6,4.5) -- (7.6,5.5);
	\node at (0,2.1){$\vdots$};
	\node at (-1.85,2.1){$\vdots$};
	\node at (-5.15,2.1){$\vdots$};
    \node at (-8.7, 2.1) {$\vdots$};
	\node at (-3.5,2.1){$\vdots$};
    \node at (-6.7, 2.1) {$\vdots$};
  \end{tikzpicture}
\end{center}
\end{figure}

\begin{remark}
    Note that Theorem \ref{T:homogeneous ideals} includes the case
	\[
		(x^{k-1}, x^{k-2}y^0, x^{k-3}y^0, \ldots, xy^0,  y^0)(i_k)  \cong R(i_k) \cong (y^j)(i_k-j),
	\]
	for any $j\geq 0$, where the latter isomorphism holds as $y$ is a non-zero divisor. 
\end{remark}

    For an ideal
    \[
       I = (x^{k-1}, x^{k-2}y^{i_1}, x^{k-3}y^{i_2}, \ldots, xy^{i_{k-2}}, y^{i_{k-1}})(i_{k})
    \] 
    with $i_1 \leq i_2 < \ldots \leq i_{k-1}$, and a homogeneous element $f \in I$, we write $\deg_I(f) = \deg(f) - i_k$.
    Associated to the ideal $I$ is the $k$-subset 
	\begin{eqnarray*}
	\unl(I)	= \big(\deg_I(y^{i_{k-1}}), \deg_I(xy^{i_{k-2}}), \ldots, \deg_I(x^{k-2}y^{i_1}), \deg_I(x^{k-1})\big) = \\  \big(-i_k-i_{k-1}, -i_k-{i_{k-2}}+1, \ldots, -i_k-i_1+k-2, -i_k+k-1\big),
	\end{eqnarray*}
which we will view as a strictly increasing tuple throughout.
We now consider again the cluster algebra of infinite rank
	\[
	    \cA_k = \bC[p_{\unl} \mid \unl \subseteq \bZ, |\unl| = k]\big/\cI_P
	 \]
where we have relabelled the Pl\"ucker coordinates by $p_{\unl}$, and where $\cI_P$ is the ideal generated by the Pl\"ucker relations (\ref{E:Plucker}). For the next result, we adapt the general set-up: To a $k$-subset $\unl\colonequals (\ell_1, \dots, \ell_k)$, as always viewed as a tuple which is strictly increasing, we associate the following graded ideal:
\begin{align*}
I(\unl) \colonequals (x^{k-1}, x^{k-2}y^{i_1}, x^{k-3}y^{i_2}, \ldots, xy^{i_{k-2}}, y^{i_{k-1}})(i_k) 
\end{align*}
where $i_k = k-1-\ell_k$ and $i_{k-p}=\ell_k-\ell_p-(k-p)$. 

\begin{theorem}\label{T:bijection}
	The generically free modules of rank $1$ in $\mathrm{MCM_{\bZ}}R$ are in bijection with the Pl\"ucker coordinates in $\cA_k$.
	This bijection is given by the inverse maps
	\begin{eqnarray*}
		\{\text{generically free modules of rank $1$ in $\mathrm{MCM_{\bZ}}R$}\} & \to &  \{\text{Pl\"ucker coordinates of $\cA_k$}\}   \\
		I 
		& \mapsto & p_{\unl(I)} \\
		 I(\unl) & \mapsfrom & p_{\unl} .
	\end{eqnarray*}

\end{theorem}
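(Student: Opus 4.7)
The plan is to leverage Theorem \ref{T:homogeneous ideals}, which tells us that every generically free rank $1$ module in $\mathrm{MCM}_{\bZ}R$ admits a standard-form representative parametrised by a tuple $(i_1, \ldots, i_{k-1}, i_k)$ with $0 \leq i_1 \leq \cdots \leq i_{k-1}$ and $i_k \in \bZ$. Since the Pl\"ucker coordinates of $\cA_k$ are by definition indexed by $k$-subsets of $\bZ$, it suffices to establish a bijection between isomorphism classes of such standard-form ideals and $k$-subsets of $\bZ$, and to verify that the two explicit maps given in the statement implement this bijection and its inverse.

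First I would check that both assignments are well-defined at the level of tuples. For $I \mapsto \unl(I)$, a direct computation (with the convention $i_0 = 0$) gives $\ell_{p+1} - \ell_p = 1 + i_{k-p} - i_{k-p-1} \geq 1$, so $\unl(I)$ is a strictly increasing $k$-tuple and hence a $k$-subset of $\bZ$. Conversely, starting from a strictly increasing $\unl = (\ell_1, \ldots, \ell_k)$ and applying the given formulas $i_k = k-1-\ell_k$ and $i_{k-p} = \ell_k - \ell_p - (k-p)$, one finds $i_1 = \ell_k - \ell_{k-1} - 1 \geq 0$ and $i_{k-p} - i_{k-p-1} = \ell_{p+1} - \ell_p - 1 \geq 0$, so the resulting tuple satisfies $0 \leq i_1 \leq \cdots \leq i_{k-1}$ and $I(\unl)$ is a legitimate generically free rank $1$ module.

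Next I would verify that the two maps are mutually inverse by straightforward substitution. Inserting the formulas for $(i_1, \ldots, i_{k-1}, i_k)$ in terms of $\unl$ into the definition of $\unl(I(\unl))$ collapses the $p$-th entry to $(p-1) - i_{k-p} - i_k = \ell_p$. Conversely, starting from any standard-form ideal $I$ with tuple $(i_1, \ldots, i_{k-1}, i_k)$, computing $\unl(I)$ and then applying the inverse formula recovers the very same tuple, so $I(\unl(I)) = I$ as graded ideals.

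The one subtlety, and what I expect to be the main obstacle, is that Theorem \ref{T:homogeneous ideals} does not explicitly assert uniqueness of the standard-form tuple representing the isomorphism class of $I$, so a priori $I \mapsto \unl(I)$ might depend on the choice of representative. To handle this, I would observe that the graded dimension function $d \mapsto \dim_{\bC} I_d$ is an isomorphism invariant, and a bookkeeping of the monomial basis of $I$ shows it equals $|\{p : \ell_p \geq d\}|$; thus $\unl(I)$ can be recovered intrinsically from $I$ as the set of degrees where this function drops, forcing distinct tuples to produce non-isomorphic modules. Equivalently, the map $\unl \mapsto [I(\unl)]$ is injective; together with surjectivity from Theorem \ref{T:homogeneous ideals} and the mutual-inverse computation above, this yields the claimed bijection.
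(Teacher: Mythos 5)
Your proposal is correct and follows the same route as the paper, whose proof of Theorem \ref{T:bijection} is the one-line remark that it "follows immediately from Theorem \ref{T:homogeneous ideals}." The computations you supply---checking that the tuple formulas $(i_1,\ldots,i_k)\leftrightarrow\unl$ are strictly-increasing- and ordering-preserving in both directions and are mutually inverse, and using the graded Hilbert function $d\mapsto\dim_{\bC}I_d=|\{p:\ell_p\geq d\}|$ to see that $\unl(I)$ is an isomorphism invariant---simply make that "immediate" step fully explicit.
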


\begin{proof}
    This follows immediately from Theorem \ref{T:homogeneous ideals}.
\end{proof}

\subsection{The Subcategory of Generically Free Maximal Cohen-Macaulay Modules}

We denote by $\mathrm{MCM_{\bZ}^0}R$ the full subcategory of $\mathrm{MCM_{\bZ}} R$ consisting of generically free maximal Cohen-Macaulay modules. In particular, it contains the generically free modules of rank $1$ which correspond to the Pl\"ucker coordinates of $\mathcal{A}_k$ by Theorem \ref{T:bijection}.

Note that generically free modules are closed under extensions and that they form an admissible subcategory of $\mathrm{MCM_{\bZ}} R$, so $\mathrm{MCM_{\bZ}^0}R$ is again a Frobenius category (see for example \cite{Chen12}). Thus, the stable category $\underline{\mathrm{MCM}}_{\bZ}^0R$ is a triangulated category and the goal of this section is to show that this category is 2-Calabi-Yau by applying results from Iyama and Takahashi \cite{IyamaTakahashi}.

\begin{lemma}\label{L:Gorenstein parameter}
The Gorenstein parameter of $R=\mathbb{C}[x,y]/(x^k)$, with $x$ in degree $1$ and $y$ in degree $-1$, is $k$.
\end{lemma}

Note that this agrees with the formula for the computation of the Gorenstein parameter provided in \cite[Example 4.8f]{HunekeTight} or in \cite[Examples 3.6.15]{BrunsHerzog}. Since our ring is non-trivial in both negative and positive degrees, we provide a direct computation for the peace of mind of the reader.

\begin{proof}
	Let $\alpha$ denote the Gorenstein parameter of $R$. Since $R$ has Krull-dimension $1$, we have $\grExt^1_R(\mathbb{C},R) \cong \mathbb{C}(-\alpha)$, where  $\grExt^j_R(A,B) = \bigoplus_{i \in \mathbb{Z}} \Ext^j(A,B(i))$ for graded $R$-modules $A$ and $B$.
	
	To compute $\alpha$, denote as before by $\mathcal{F}$ the graded total ring of fractions $R_y$, and consider the sequence
	\[
		R \to \mathcal{F} \to \mathcal{F}/R,
	\]
	where the first map is localisation at $y$. We first verify this is an injective resolution of $R$: Indeed, since $y$ is a non-zero divisor, the first map is injective. Furthermore, by Baer's criterion, $\mathcal{F}$ is injective over $\mathcal{F}$. Since $\mathcal{F}$ is flat over $R$, restriction of scalars sends injectives to injectives. It follows that $\mathcal{F}$ is injective over $R$. Finally, since $R$ has injective dimension $1$ as a graded module over itself, the cokernel $\mathcal{F}/R$ must be injective as well.
	
	Now apply $\grHom_R(\mathbb{C},-)$ to this resolution. Note that the socle of $\mathcal{F}/R$ is generated by $y^{-1}x^{k-1}$ (up to multiplication by a scalar this is the only element in $\mathcal{F}/R$ that gets annihilated by both $x$ and $y$), which lives in degree $k$. Since $\mathbb{C}$ must map into the socle of $\mathcal{F}/R$, it follows that $\grHom(\mathbb{C},\mathcal{F}/R) \cong \mathbb{C}(-k)$. We know that $\grExt^1_R(\mathbb{C},R)$ is one dimensional, so we must have
	\[
		\grExt^1_R(\mathbb{C},R) \cong \grHom_R(\mathbb{C},\mathcal{F}/R) \cong \mathbb{C}(-k).
	\]
	The claim follows.
\end{proof}

\begin{lemma}\label{L:Sigma squared}
	Denote by $\Sigma$ the suspension in the stable category $\underline{\mathrm{MCM}}_{\bZ}R$. Then $\Sigma^2 \cong (k)$.
\end{lemma}

\begin{proof}
The well-known equivalence between the stable category of maximal Cohen--Macaulay modules $\underline{\mathrm{MCM}}(R)$ and the category of reduced matrix factorisations $\underline{\mathrm{MF}}(S,x^k)$ (see \cite[6.1,6.3]{Eisenbud} or \cite[Theorem 7.4]{Yos}) also holds in the graded case (cf. \cite[Remark 1.8 ]{BGS}). Thus we have an exact equivalence
	\[
		\underline{\mathrm{MCM}}_{\bZ}R \cong \underline{\mathrm{MF}}_{\bZ}(S,x^k),
	\]
	where $S = \mathbb{C}[x,y]$ with $x$ in degree $1$ and $y$ in degree $-1$, and $\underline{\mathrm{MF}}_{\bZ}(S,x^k)$ denotes the homotopy category of graded matrix factorisations of $x^k$ over $S$. Indeed, if $(d_0,d_1)$ is a matrix factorisation of $x^k$, i.e.\ $d_1d_0$ is multiplication by $x^k$, and thus a degree $k$ map. Suspension on matrix factorisations is given by the shift, when viewing them as (twisted) $2$-periodic objects, so double suspension is  just the degree shift by $k$ and so $\Sigma^2$ acts as $(k)$ on objects and morphisms.
\end{proof}

\begin{proposition} \label{P: stably 2CY}
	The category $\mathrm{MCM_{\bZ}^0}R$ of generically free maximal Cohen-Macaulay modules is stably 2-Calabi-Yau.
\end{proposition}

\begin{proof}
	By \cite[Cor.~3.5]{IyamaTakahashi}, $\underline{\mathrm{MCM}}_{\bZ}^0R$ has Serre functor $S = (\alpha)$ where $\alpha$ denotes the Gorenstein parameter of $R$. By Lemma \ref{L:Gorenstein parameter} we have $\alpha = k$, and by Lemma \ref{L:Sigma squared} it follows that
	\[
		S = (k) \cong \Sigma^2. \qedhere
	\] 
\end{proof}

\section{Compatibility}

In this section, we fix $k \geq 2$ and  continue to write $R$ for the $\Z$-graded ring $\C[x,y]/(x^k)$ with $x$ in degree $1$ and $y$ in degree $-1$. We set $\cC \colonequals \mathrm{MCM}_\Z R$ and furthermore, we denote the $\Hom$ and $\Ext^1$ bifunctors in $\cC$ by $\Hom(-,-)$ and $\Ext^1(-,-)$ respectively. We will
show that for two generically free MCM modules $I$ and $J$ of rank $1$ we have $\Ext^1(I,J) = 0$, if and only if the Pl\"ucker coordinates corresponding to $I$ and $J$ are compatible, cf.\ Section \ref{S: finite}.

The key intermediate result of this section is to compute the dimension of the $\Ext^1$-spaces between generically free modules of rank $1$ in $\mathrm{MCM}_\Z R$. A formula for this is provided in Theorem \ref{T: ext dimension} and as a consequence, we deduce our following main result.
\begin{theorem}\label{T:compatible}
Let $I, J$ be two generically free rank $1$ modules in $\mathrm{MCM}_\Z R$ and $p_{\unl(I)}$ and $p_{\unl(J)}$ the corresponding Pl\"ucker coordinates. Then 
$\Ext^1(I, J) =0$ if and only if $p_{\unl(I)}$ and $p_{\unl(J)}$ are compatible. 
\end{theorem}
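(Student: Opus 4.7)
The theorem is set up explicitly as a consequence of Theorem \ref{T: ext dimension} (Theorem C in the introduction), which supplies the closed-form expression
\[
\cdim(\Ext^1(I,J)) = \alpha(\unl(I), \unl(J)) + \beta(\unl(I), \unl(J)) - k - |\unl(I) \cap \unl(J)|.
\]
Writing $\unl = \unl(I)$ and $\um = \unl(J)$, the entire task reduces to the purely combinatorial assertion
\[
\alpha(\unl, \um) + \beta(\unl, \um) = k + |\unl \cap \um| \iff \unl \text{ and } \um \text{ are noncrossing.}
\]
Since the left-hand side of the Ext formula is a vector-space dimension, the non-strict inequality $\alpha(\unl, \um) + \beta(\unl, \um) \geq k + |\unl \cap \um|$ is automatic, so only the equality case needs to be characterised.

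For the direction ``noncrossing $\Longrightarrow$ equality'', I would argue that when $\unl$ and $\um$ are noncrossing, the union $\unl \cup \um$ decomposes into maximal blocks on which the two subsets either agree on a common element or one lies entirely to the side of (or nested inside) the other. On each such block the two staircase paths in the $(k \times k)$-grid coincide, so the diagonals strictly above the common path number exactly $k$ (one per row) while those strictly below number exactly $|\unl \cap \um|$ (one per shared element). The cleanest route is probably an induction on $k$, or on the number of noncrossing blocks, reducing to a single block and counting directly.

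For the direction ``crossing $\Longrightarrow$ strict inequality'', given a crossing $i_1 < j_1 < i_2 < j_2$ with $i_1, i_2 \in \unl \setminus \um$ and $j_1, j_2 \in \um \setminus \unl$ (or vice versa), I would show that the construction of the staircase paths forces them to separate precisely at the grid positions induced by that crossing. Each such separation produces at least one extra diagonal lying above the upper path or below the lower path that is not accounted for by a shared element, contributing strictly positively to $\alpha + \beta - k - |\unl \cap \um|$. An inductive argument, resolving one crossing at a time, should then yield strict inequality in general.

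The main obstacle is the combinatorial bookkeeping. The staircase paths are defined by an intricate interleaving procedure on the sorted union of $\unl$ and $\um$, and translating the global crossing/noncrossing condition of the $k$-subsets into local statements about path divergence in the grid requires a careful case analysis. Once this dictionary between crossings and path separations is set up, both directions reduce to essentially visual counting arguments; the real work lies in establishing that dictionary cleanly and showing that each crossing contributes independently.
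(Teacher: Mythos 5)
Your top-level plan is exactly the paper's: invoke Theorem~\ref{T: ext dimension} to reduce the statement to the purely combinatorial equivalence
\[
\alpha(\unl,\um) + \beta(\unl,\um) = k + |\unl \cap \um| \iff \unl \text{ and } \um \text{ are noncrossing},
\]
which is the content of the paper's Corollary~\ref{cor:disjoint case}. Your observation that the inequality $\alpha + \beta \geq k + |\unl\cap\um|$ comes for free from non-negativity of $\cdim\Ext^1$ is correct and not used by the paper, and it does logically let you collapse the problem to a single implication. That much is fine.

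The gap is in your sketch of the noncrossing direction. You assert that for noncrossing $\unl,\um$ the staircase paths ``coincide'' and that ``the diagonals strictly above the common path number exactly $k$ \dots while those strictly below number exactly $|\unl \cap \um|$.'' This would force $\alpha(\unl,\um)=k$ and $\beta(\unl,\um)=|\unl\cap\um|$ for every noncrossing pair, which is false: for $k=2$, $\unl=(3,4)$, $\um=(1,2)$ are disjoint and noncrossing, yet every entry of $\Alm=\Blm$ is empty, giving $\alpha = 0$ and $\beta = 2$. Noncrossing only pins down the \emph{sum} $\alpha+\beta-|\unl\cap\um| = k$, not the individual summands. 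Moreover the two grids $\Alm$ and $\Blm$ genuinely differ whenever $\unl\cap\um\neq\emptyset$ (the filled condition is $\leq$ in one and $<$ in the other), so there is no single ``common path'' to speak about, and your block decomposition never explains how to compare the two paths across those grids.

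What the paper does instead of the block decomposition is split the argument into two cleaner steps. First, for \emph{disjoint} $\unl,\um$, Lemma~\ref{L: equaldisjoint} gives $\Alm=\Blm$, so there is literally one staircase, and Lemma~\ref{lem: single step} characterises noncrossing as the path being one of four ``single step'' shapes and shows that exactly these shapes satisfy $\alpha+\beta=k$ (with the converse argued by maximality of $\alpha$, $\beta$ and Lemma~\ref{lem: regions above}). Second, the general case is reduced to the disjoint one by deleting a common element $\ell_i = m_j$ from both $\unl$ and $\um$: the resulting $(k-1)$-grids are obtained by deleting row $i$ and column $j$, the noncrossing property is preserved, $|\unl\cap\um|$ drops by $1$, and Corollaries~\ref{lem: ca drop} and~\ref{lem: cb drop} show $\alpha$ and $\beta$ each drop by exactly $1$. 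Iterating eliminates the intersection and lands you in the disjoint case. If you want to complete your own sketch, you should replace the ``paths coincide on blocks'' step with this deletion-and-downshift argument; the crossing direction of your sketch then comes along automatically from the contrapositive of Lemma~\ref{lem: single step} after the same reduction, rather than needing a separate ``each crossing contributes independently'' analysis.
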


To prove Theorem \ref{T:compatible} we show that the $k$-subsets $\unl \colonequals \unl(I)$ and $\um \colonequals \unl(J)$ are noncrossing. 

This section is structured as follows. In Subsection \ref{S:tool} we develop a combinatorial tool to record the crossing pattern of $\unl$ and $\um$. In Section \ref{S:dimension} we provide a general formula to calculate the dimension of the $\Ext^1$-space between any two generically free modules of rank $1$, using the tool from Section \ref{S:tool}. Section \ref{S:example} provides a concrete example in the case $k=3$. Finally, in Section \ref{S:reduction} we prove Theorem \ref{T:compatible}, using reduction to the setting where $\unl$ and $\um$ are disjoint sets. 

\subsection{Combinatorial Tool}\label{S:tool}
Given two $k$-subsets $\unl$ and $\um$, we now introduce a combinatorial tool which will help us to calculate the dimension of $\Ext^1(I(\unl),I(\um))$, as well as determining whether or not the subsets are crossing.

\begin{definition}\label{D:A(l,m)}
Let $A(\unl, \um)$ (respectively $B(\unl,\um)$) be a $(k \times k)$ grid where the vertex $\Alm_{i,j}$ is filled if $\ell_i \leq m_j$ (respectively $\Blm_{i,j}$ is filled if $\ell_i < m_j$) and is empty otherwise. 
\end{definition}

\begin{example} \label{ex: k=4 grids}
Take $k=4$ and consider the subsets $\unl$ and $\um$ with 
\begin{align*}
m_1 < \ell_1 < \ell_2 =m_2 < m_3 < \ell_3  < m_4 < \ell_4.
\end{align*}
\begin{center}
\begin{tikzpicture}
\node at (-2.5,0) {$A(\unl, \um)=$};
\node at (3.5,0) {$B(\unl, \um)=$};
\node at (0,0) {
  \begin{tikzpicture}[scale=0.67]
    \foreach \x in {1,2,3,4}
	{
	\node at (0,5-\x) {$\x$};
	}
    \foreach \y in {1,2,3,4}
	{
	\node at (\y,5) {$\y$};
	}
    \foreach \x in {1,2,3,4}
    \foreach \y in {1,2,3,4}
    {
    \node[vertex] (\x,\y) at (\x,\y) {};
    }
    \fill (2,5-1) node[fvertex] {};
    \fill (3,5-1) node[fvertex] {};
    \fill (4,5-1) node[fvertex] {};
    \fill (2,5-2) node[fvertex] {};
    \fill (3,5-2) node[fvertex] {};
    \fill (4,5-2) node[fvertex] {};
    \fill (4,5-3) node[fvertex] {};
  \end{tikzpicture}};
\node at (6,0) {
  \begin{tikzpicture}[scale=0.67]
    \foreach \x in {1,2,3,4}
	{
	\node at (0,5-\x) {$\x$};
	}
    \foreach \y in {1,2,3,4}
	{
	\node at (\y,5) {$\y$};
	}
    \foreach \x in {1,2,3,4}
    \foreach \y in {1,2,3,4}
    {
    \node[vertex] (\x,\y) at (\x,\y) {};
    }
    \fill (2,5-1) node[fvertex] {};
    \fill (3,5-1) node[fvertex] {};
    \fill (4,5-1) node[fvertex] {};
    \fill (3,5-2) node[fvertex] {};
    \fill (4,5-2) node[fvertex] {};
    \fill (4,5-3) node[fvertex] {};
  \end{tikzpicture}};
\end{tikzpicture}
\end{center}
\end{example}
\begin{lemma} \label{L: equaldisjoint}
If $\unl$ and $\um$ are disjoint $k$-subsets then $\Alm = \Blm$.
\end{lemma}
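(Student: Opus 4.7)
The proof is immediate from the definitions, so the plan is essentially just to unwind them and note that disjointness removes the only case where the two conditions can differ.

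The plan is to compare the filling conditions vertex by vertex. By Definition \ref{D:A(l,m)}, the vertex $A(\unl,\um)_{i,j}$ is filled exactly when $\ell_i \leq m_j$, while $B(\unl,\um)_{i,j}$ is filled exactly when $\ell_i < m_j$. The only way these two conditions can disagree at position $(i,j)$ is if $\ell_i = m_j$.

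So first I would observe that the hypothesis $\unl \cap \um = \emptyset$ means precisely that $\ell_i \neq m_j$ for all indices $1 \leq i,j \leq k$. Then for every such pair $(i,j)$, the strict inequality $\ell_i < m_j$ is equivalent to the non-strict inequality $\ell_i \leq m_j$. Hence $A(\unl,\um)_{i,j}$ is filled if and only if $B(\unl,\um)_{i,j}$ is filled, and the two grids coincide.

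There is no real obstacle here; the statement is a sanity check on the definitions, recording that the distinction between $A$ and $B$ only becomes meaningful when $\unl$ and $\um$ share elements.
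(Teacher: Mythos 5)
Your proof is correct and is exactly the unwinding of Definition \ref{D:A(l,m)} that the paper summarises as ``Clear from Definition''; there is nothing to add.
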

\begin{proof}
Clear from Definition \ref{D:A(l,m)}.
\end{proof}
\begin{lemma} \label{lem: regions above}
If $\Alm_{i,j}$ is filled, then so is $\Alm_{p,q}$ for all $p \leq i, q \geq j$. Further, If $\Alm_{i,j}$ is empty, then so is $\Alm_{p,q}$ for all $p \geq i, q \leq j$.
\end{lemma}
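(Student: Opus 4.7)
The plan is to unwind the definition of $\Alm$ and exploit the fact that $\unl$ and $\um$ are strictly increasing tuples, so that monotonicity of the relation $\ell_i \leq m_j$ in the two indices $i$ and $j$ does all the work.

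First I would recall that, by Definition \ref{D:A(l,m)}, $\Alm_{i,j}$ is filled exactly when $\ell_i \leq m_j$, and empty exactly when $\ell_i > m_j$. Since the $k$-subsets $\unl = (\ell_1 < \ldots < \ell_k)$ and $\um = (m_1 < \ldots < m_k)$ are viewed throughout as strictly increasing tuples, we have $\ell_p \leq \ell_i$ whenever $p \leq i$, and $m_q \geq m_j$ whenever $q \geq j$.

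For the first statement, assume $\Alm_{i,j}$ is filled, i.e.\ $\ell_i \leq m_j$. Then for any $p \leq i$ and $q \geq j$, chaining the inequalities gives $\ell_p \leq \ell_i \leq m_j \leq m_q$, so $\Alm_{p,q}$ is filled. The second statement is then the contrapositive applied with the roles of indices swapped: if $\ell_i > m_j$, then for $p \geq i$ and $q \leq j$ we have $\ell_p \geq \ell_i > m_j \geq m_q$, so $\Alm_{p,q}$ is empty.

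There is no real obstacle here; the content of the lemma is essentially a rectangle-closure property of the filled/empty regions, which follows formally from the bimonotonicity of $(i,j) \mapsto [\ell_i \leq m_j]$. The statement is included as a structural remark to justify later references to the shape of the filled region (it forms a "Young-diagram-like" staircase inside the grid), so the proof should be stated briefly and cleanly.
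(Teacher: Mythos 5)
Your argument is correct and is essentially identical to the paper's: both unwind the definition of $\Alm_{i,j}$ and chain the inequalities $\ell_p \leq \ell_i \leq m_j \leq m_q$ using monotonicity of the strictly increasing tuples. The only cosmetic difference is that you spell out the second half directly, whereas the paper simply says it is ``proved similarly.''
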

\begin{proof}
Suppose $\Alm_{i,j}$ is filled, and hence by definition $\ell_i \leq m_j$. Since the $k$-subsets $\unl$ and $\um$ are strictly increasing, if $p \leq i$, then $\ell_p \leq \ell_i$ and similarly, if $q \geq j$, then $m_j \leq m_q$. Thus, for all pairs $(p,q)$ with $p \leq i, q \geq j$, 
\begin{align*}
\ell_p \leq \ell_i \leq m_j \leq m_q
\end{align*}
and hence $\Alm_{p,q}$ is also filled. The second statement is proved similarly. 
\end{proof}
In other words, Lemma \ref{lem: regions above} tells us that there is a staircase path obtained by separating the shaded and empty regions of $\Alm$. In Example \ref{ex: k=4 grids} we obtain the following path:
\begin{center}
  \begin{tikzpicture}[scale=0.67]
    \foreach \x in {1,2,3,4}
    \foreach \y in {1,2,3,4}
    {
    \node[vertex] at (\x,\y) {};
    }
    \fill (2,5-1) node[fvertex] {};
    \fill (3,5-1) node[fvertex] {};
    \fill (4,5-1) node[fvertex] {};
    \fill (2,5-2) node[fvertex] {};
    \fill (3,5-2) node[fvertex] {};
    \fill (4,5-2) node[fvertex] {};
    \fill (4,5-3) node[fvertex] {};
 \draw (0.5,4.5) -- (1.5,4.5)--(1.5,2.5)--(3.5,2.5)--(3.5,1.5)--(4.5,1.5)--(4.5,0.5);
  \end{tikzpicture}
\end{center}

Note that there is a completely analogous statement of Lemma \ref{lem: regions above} for $\Blm$ and thus we also get a staircase path separating the shaded and unshaded regions there. We use these staircase paths to define two nonnegative integers associated to the pair $\unl$ and $\um$ of $k$-subsets. For $1 \leq p \leq k$ we define the sets
\[
    D_p^+ = \{(i,j) \mid j-i = k-p\}
\]
to be the {\em upper diagonals of a $(k \times k)$-grid} and
\[
    D_p^- = \{(i,j) \mid i-j = k-p\}
\]
to be the {\em lower diagonals of a $(k \times k)$-grid}. Note that $D_k^+ = D_k^-$.  Below is a picture of $(4 \times 4)$-grid with the upper diagonals circled.
\begin{center}
  \begin{tikzpicture}[scale=0.67]
    \foreach \x in {1,2,3,4}
    \foreach \y in {1,2,3,4}
    {
    \node[vertex] at (\x,\y) {};
    }
\draw[rotate=45] (3.525,0) ellipse (0.4cm and 2.5cm);
\draw[rotate=45] (4.25,0) ellipse (0.3cm and 1.8cm);
\draw[rotate=45] (4.95,0) ellipse (0.25cm and 1.1cm);
\draw[rotate=45] (5.65,0) ellipse (0.2cm and 0.5cm);
\node at (5.2,0.9) {$D_4^+$};
\node at (5.2,1.9) {$D_3^+$};
\node at (5.2,2.9) {$D_2^+$};
\node at (5.2,3.9) {$D_1^+$};
  \end{tikzpicture}
\end{center}

\begin{definition}\label{Def:numberofdiagonals}
With the above notation, we introduce the following:

\begin{enumerate}
\item Let $\alpha(\unl, \um)$ be the number of upper diagonals that lie completely above the staircase path in $\Alm$ i.e.\ 
\begin{align*}
\alpha(\unl,\um) \colonequals 
\begin{cases}
\max_{1 \leq p\leq k}\{ p \mid \forall (i,j) \in D_p^+, \ \ell_i \leq m_j\} & \text{if it exists}\\
0 & \text{otherwise}.
\end{cases}
\end{align*}

\item Let $\beta(\unl, \um)$ be the number of lower diagonals that lie completely below the staircase path in $\Blm$ i.e.\ 
\begin{align*}
\beta(\unl,\um) \colonequals 
\begin{cases}
\max_{1 \leq p\leq k}\{ p \mid \forall (i,j) \in D_p^-, \ \ell_i \geq m_j\} & \text{if it exists}\\ 
0 & \text{otherwise}.   
\end{cases}
\end{align*}
\end{enumerate}
\end{definition}

When the choice of $\unl$ and $\um$ is clear we will often shorten $\alpha(\unl,\um)$ to $\alpha$ and $\beta(\unl,\um)$ to simply $\beta$.
\begin{example}
Returning to Example \ref{ex: k=4 grids}, we see that $\alpha(\unl,\um)=3$ and  $\beta(\unl,\um)=4$.
\begin{center}
  \begin{tikzpicture}[scale=0.67]
  \node at (-1,2.5) {$\Alm =$};
    \foreach \x in {1,2,3,4}
    \foreach \y in {1,2,3,4}
    {
    \node[vertex] at (\x,\y) {};
    }
    \fill (2,5-1) node[fvertex] {};
    \fill (3,5-1) node[fvertex] {};
    \fill (4,5-1) node[fvertex] {};
    \fill (2,5-2) node[fvertex] {};
    \fill (3,5-2) node[fvertex] {};
    \fill (4,5-2) node[fvertex] {};
    \fill (4,5-3) node[fvertex] {};
 \draw (0.5,4.5) -- (1.5,4.5)--(1.5,2.5)--(3.5,2.5)--(3.5,1.5)--(4.5,1.5)--(4.5,0.5);
\draw [draw=green, fill=green, opacity=0.2] (1,4.5) -- (4.5,4.5) -- (4.5,1) -- cycle;
  \end{tikzpicture}
\hspace{1cm}
  \begin{tikzpicture}[scale=0.67]
    \node at (-1,2.5) {$\Blm =$};
    \foreach \x in {1,2,3,4}
    \foreach \y in {1,2,3,4}
    {
    \node[vertex] at (\x,\y) {};
    }
    \fill (2,5-1) node[fvertex] {};
    \fill (3,5-1) node[fvertex] {};
    \fill (4,5-1) node[fvertex] {};
    \fill (3,5-2) node[fvertex] {};
    \fill (4,5-2) node[fvertex] {};
    \fill (4,5-3) node[fvertex] {};
 \draw (0.5,4.5) -- (1.5,4.5)--(1.5,3.5)--(2.5,3.5)--(2.5,2.5)--(3.5,2.5)--(3.5,1.5)--(4.5,1.5)--(4.5,0.5);
\draw [draw=red, fill=red, opacity=0.2] (0.5,0.5) -- (0.5,5) -- (5,0.5) -- cycle;
  \end{tikzpicture}
\end{center}
\end{example}

\begin{lemma} \label{L:symmetry}
    If $\unl$ and $\um$ are $k$-subsets, then $\alpha(\unl,\um) = \beta(\um, \unl)$.
\end{lemma}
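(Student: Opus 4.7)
The plan is to unfold both definitions and show directly that the two conditions defining $\alpha(\unl,\um) \geq p$ and $\beta(\um,\unl) \geq p$ are equivalent for every $p$, which then forces the two maxima to agree. Since $\unl$ and $\um$ are simply two $k$-subsets, nothing about the proof is affected by swapping their roles, and the key observation is purely combinatorial: the transpose map $(i,j) \mapsto (j,i)$ is a bijection between the upper and lower diagonals of a $(k \times k)$-grid.

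Concretely, first I would note that $(i,j) \in D_p^+$ means $j - i = k - p$, while $(i',j') \in D_p^-$ means $i' - j' = k - p$; so $(i,j) \mapsto (j,i)$ restricts to a bijection $D_p^+ \xrightarrow{\sim} D_p^-$. Next I would rewrite the two defining conditions. By definition, $\alpha(\unl,\um) \geq p$ says
\begin{align*}
\ell_i \leq m_j \quad \text{for all } (i,j) \in D_p^+,
\end{align*}
while $\beta(\um,\unl) \geq p$ (obtained by substituting $\unl \leftrightarrow \um$ into the definition of $\beta$) says
\begin{align*}
m_{i'} \geq \ell_{j'} \quad \text{for all } (i',j') \in D_p^-.
\end{align*}
Under the bijection $(i,j) \mapsto (i',j') = (j,i)$, the first inequality $\ell_i \leq m_j$ becomes exactly $\ell_{j'} \leq m_{i'}$, so the two universal conditions are literally the same statement.

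From this equivalence it follows immediately that the set of $p \in \{1,\dots,k\}$ over which $\alpha(\unl,\um)$ takes its maximum coincides with the set over which $\beta(\um,\unl)$ takes its maximum (both being the set of $p$ for which the above condition holds), and the conventions for the default value $0$ agree. Hence $\alpha(\unl,\um) = \beta(\um,\unl)$. There is no real obstacle here; the only thing to be careful about is keeping track of which grid ($A$ versus $B$) and which pair of subsets is being used, since one is defined by a weak inequality in $A(\unl,\um)$ and the other by a weak inequality coming from the \emph{complement} of the strict inequality used in $B(\um,\unl)$. These two inequalities coincide precisely because the definition of $\beta$ is phrased using $\ell_i \geq m_j$ (the negation of $\ell_i < m_j$), which is exactly what is needed.
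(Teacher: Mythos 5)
Your proof is correct and follows essentially the same route as the paper: both unfold the definitions of $\alpha(\unl,\um) \geq p$ and $\beta(\um,\unl) \geq p$, observe that the transpose map $(i,j)\mapsto (j,i)$ turns one condition into the other, and conclude that the maxima agree. The paper states this more tersely but the argument is identical.
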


\begin{proof}
    By definition $D_p^+$ lies completely above the staircase path in $\Alm$ if, for all $(i,j)$ such that $j-i=k-p$, we have $l_i \leq m_j$. Similarly, $D_p^-$ lies completely below the staircase path in $B(\um,\unl)$ if, for all $(j,i)$ with $j-i=k-p$ we have $m_j \geq l_i$. Since these conditions are the same, the number $\alpha(\unl, \um)$ of upper diagonals that lie completely above the staircase path in $\Alm$ is the same as the number $\beta(\um,\unl)$ of lower diagonals that lie completely below the staircase path in $B(\um,\unl)$. 
\end{proof}

\begin{lemma} \label{lem: single step}
If $\unl$ and $\um$ are disjoint $k$-subsets, then they are noncrossing if and only if the staircase path consists of a single step i.e.\ looks like one of the following:
\begin{center}
\begin{tikzpicture}[scale=0.4]
 \draw (0.5,4.5) -- (4.5,4.5)--(4.5,0.5);
 \draw (6.5,4.5) -- (8.5,4.5)--(8.5,0.5)--(10.5,0.5);
 \draw (12.5,4.5) -- (12.5,2.5)--(16.5,2.5)--(16.5,0.5);
 \draw (18.5,4.5) -- (18.5,0.5)--(22.5,0.5);
\end{tikzpicture}
\end{center}
Moreover, this holds if and only if $\alpha(\unl,\um)+\beta(\unl,\um)=k$.
\end{lemma}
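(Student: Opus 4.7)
The plan is to establish two biconditionals in turn: first, that $\unl, \um$ are noncrossing if and only if the staircase path is single-step; then, that single-step is equivalent to $\alpha + \beta = k$.

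For the first equivalence, since $\unl, \um$ are disjoint, Lemma \ref{L: equaldisjoint} identifies $\Alm$ and $\Blm$, so there is a single staircase path. I would introduce the transition column $c_i := \min\{j : \ell_i \leq m_j\}$ (with $c_i = k+1$ if no such $j$ exists). By Lemma \ref{lem: regions above}, $c_1 \leq \cdots \leq c_k$. The staircase path then reads off as the word in $\{R, D\}^{2k}$ consisting of $(c_1 - 1)$ right steps, one down step, $(c_2 - c_1)$ right steps, one down step, \ldots, one down step, and finally $(k+1 - c_k)$ right steps. Under the identification $\ell \leftrightarrow D$, $m \leftrightarrow R$, this word is exactly the merge sequence of $\unl \cup \um$, and the four pictured single-step shapes correspond precisely to words with at most three maximal runs. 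For disjoint $k$-subsets, the merge sequence having $\leq 3$ blocks is equivalent to being noncrossing, as any $IJIJ$ or $JIJI$ subpattern forces $\geq 4$ blocks and produces a crossing, and conversely $\geq 4$ blocks always contain such a subpattern in their first four blocks.

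For the second equivalence, the key identity is
\[
    \alpha(\unl, \um) + \beta(\unl, \um) = 2k - 1 - \Big(\max_i f(i) - \min_i f(i)\Big),
\]
where $f(i) := c_i - i$. Unfolding definitions, $D_p^+$ is fully filled iff $f(i) \leq k - p$ for all $i \leq p$, and combining this with the automatic bound $f(i) \leq k+1-i$ (from $c_i \leq k+1$) gives $k - \alpha = \max_i f(i)$. A symmetric argument yields $k - \beta = 1 - \min_i f(i)$, and addition produces the identity.

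It remains to show that the range $\max_i f(i) - \min_i f(i)$ is always at most $k-1$, with equality if and only if the path is single-step. The upper bound follows from two elementary step-size considerations: for $i < j$, monotonicity of $c$ gives $f(j) - f(i) \geq -(j - i) \geq -(k-1)$; and for $i > j$, $f(i) - f(j) = (c_i - c_j) - (i - j) \leq k - 1$ using $c_i - c_j \leq k$ and $i - j \geq 1$. The main obstacle is the equality analysis: saturating the first bound forces $c$ to be constant, yielding Pictures 1, 2, and 4, while saturating the second forces a single adjacent jump $c_{j} = 1$, $c_{j+1} = k+1$, yielding Picture 3. Thus $\alpha + \beta = k$ iff the path is single-step, which combined with the first equivalence completes the proof.
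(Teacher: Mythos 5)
Your approach is correct, but it is genuinely different from the paper's. You introduce the transition-column function $c_i = \min\{j : \ell_i \le m_j\}$ (with value $k+1$ if no such $j$ exists) and its normalization $f(i) = c_i - i$, then derive closed formulas $\alpha(\unl,\um) = k - \max_i f(i)$ and $\beta(\unl,\um) = k - 1 + \min_i f(i)$, so that $\alpha + \beta = 2k-1 - (\max f - \min f)$; the lemma then reduces to showing the range of $f$ equals $k-1$ exactly in the four single-step cases, which falls out of the monotonicity constraints $f(i+1) - f(i) \ge -1$ and $1-i \le f(i) \le k+1-i$. You also replace the paper's local ``kink'' argument for the first biconditional with a merge-sequence count: for disjoint subsets, noncrossing is equivalent to the interleaving word having at most three maximal blocks. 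The paper instead proves the forward direction of the second biconditional by reading off $\alpha, \beta$ from each of the four pictures directly, and proves the reverse direction by a case analysis on where the obstruction on $D^-_{\beta+1}$ sits, using Lemma \ref{lem: regions above} and the maximality of $\alpha$ to rule out the inner vertices and force one of the two end-vertices, each yielding a specific picture. Your argument is more algebraic and yields the reusable identity for $\alpha + \beta$ in terms of the range of $f$, while the paper's is more visual and stays closer to the definitions. One caveat: the two formulas $k - \alpha = \max_i f(i)$ and $k - \beta = 1 - \min_i f(i)$ are stated but only sketched (``unfolding definitions ... combining this with the automatic bound''); they are correct for disjoint $\unl, \um$, but their verification is the real content of the second half and should be spelled out, since without them the identity $\alpha + \beta = 2k-1 - (\max f - \min f)$ is unsupported.
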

\begin{proof}
Since $\unl$ and $\um$ are disjoint, $\Alm=\Blm$ by Lemma \ref{L: equaldisjoint} and so there is only one staircase path associated to the pair. It is straightforward to check that if the path is one of the four given, then $\unl$ and $\um$ are noncrossing. On the other hand, if the staircase path for $\unl$ and $\um$ is not one of the four given, the path must have one of the following local configurations:
\begin{center}
  \begin{tikzpicture}[scale=1]
    \foreach \x in {1,2}
    \foreach \y in {1,2}
    {
    \node[vertex] at (\x,\y) {};
    }
    \fill (2,2) node[fvertex] {};
 \draw (1.5,2.5)--(1.5,1.5)--(2.5,1.5);
    \node at (1,0.6) {$\scriptstyle i,j$};
    \node at (1,2.4) {$\scriptstyle i-1,j$};        
    \node at (2,0.6) {$\scriptstyle i,j+1$};
    \node at (2.1,2.4) {$\scriptstyle i-1,j+1$};
    \foreach \x in {6,7}
    \foreach \y in {1,2}
    {
    \node[vertex] at (\x,) {};
    }
    \fill (7,2) node[fvertex] {};
    \fill (6,2) node[fvertex] {};
    \fill (7,1) node[fvertex] {};
 \draw (5.5,1.5) -- (6.5,1.5)--(6.5,0.5);
    \node at (6,0.6) {$\scriptstyle i,j$};
    \node at (6,2.4) {$\scriptstyle i-1,j$};        
    \node at (7,0.6) {$\scriptstyle i,j+1$};
    \node at (7.1,2.4) {$\scriptstyle i-1,j+1$};    
  \end{tikzpicture}
\end{center}
In the former case, since $\unl$ and $\um$ are disjoint, we have $m_j < l_{i-1} < m_{j+1} < l_i$ and thus $\unl$ and $\um$ are crossing. The latter case follows similarly.

For the second statement, first assume that the step path in $\Alm$ (and hence also in $\Blm$ since $\unl$ and $\um$ are disjoint) is one of the four given cases. In each case, it is easy to read off $\alpha$ and $\beta$:
 \begin{center}
\begin{tikzpicture}[scale=0.6]
 \draw (0.5,4.5) -- (4.5,4.5)--(4.5,0.5);
\draw [thick, decoration={brace, mirror, raise=0.1cm}, decorate] (0.5,0.5)--(4.5,0.5);
\draw [thick, decoration={brace, raise=0.1cm}, decorate] (0.5,0.5)--(0.5,4.5);
\node at (2.5,-0.2) {$\beta=k$};
\node at (2.5,-0.8) {$\alpha=0$};
 \draw (6.5,4.5) -- (8.5,4.5)--(8.5,0.5)--(10.5,0.5);
\draw [thick, decoration={brace, mirror, raise=0.1cm}, decorate] (6.5,0.5)--(8.5,0.5);
\draw [thick, decoration={brace, raise=0.1cm}, decorate] (8.5,4.5)--(10.5,4.5);
\node at (7.5,-0.2) {$\beta$};
\node at (9.5,5.2) {$\alpha$};
 \draw (12.5,4.5) -- (12.5,2.5)--(16.5,2.5)--(16.5,0.5);
\draw [thick, decoration={brace, raise=0.1cm}, decorate] (12.5,0.5)--(12.5,2.5);
\draw [thick, decoration={brace, mirror, raise=0.1cm}, decorate] (16.5,2.5)--(16.5,4.5);
\node at (11.8,1.5) {$\beta$};
\node at (17.2,3.5) {$\alpha$};
 \draw (18.5,4.5) -- (18.5,0.5)--(22.5,0.5);
\draw [thick, decoration={brace, mirror, raise=0.1cm}, decorate] (22.5,0.5)--(22.5,4.5);
\node at (23.8,2.5) {$\alpha=k$};
\node at (23.8,1.8) {$\beta=0$};
\end{tikzpicture}
\end{center}
It is clear that in all cases $\alpha+\beta=k$. \\

Now assume that $\alpha+\beta=k$. Since $\unl$ and $\um$ are disjoint, we have $\Alm=\Blm$ and so both $\alpha$ and $\beta$ can be determined by looking solely at $\Alm$. 

Start by considering the case when $\beta=k$ and $\alpha=0$. Since $\alpha=0$, we know that $\Alm_{1,k}$ lies below the staircase, and thus by Lemma \ref{lem: regions above}, $\Alm_{p,q}$ lies below the staircase for all $1 \leq p,q \leq k$. Then the staircase must be:
\begin{center}
\begin{tikzpicture}[scale=0.5]
 \draw (0.5,4.5) -- (4.5,4.5)--(4.5,0.5);
\end{tikzpicture}
\end{center}
Similarly, considering the case when $\alpha=k$ and $\beta=0$, this implies $\Alm_{k,1}$ lies above the staircase otherwise $\beta$  would be at least one. By Lemma \ref{lem: regions above}, this implies further that $\Alm_{p,q}$ lies above the staircase for all $1 \leq p,q \leq k$, and thus the staircase must be:
\begin{center}
\begin{tikzpicture}[scale=0.5]
 \draw (18.5,4.5) -- (18.5,0.5)--(22.5,0.5);
\end{tikzpicture}
\end{center}

Now assume that $1 < \beta < k$. Since $\beta$ is chosen to be maximal, we know that there must exist a pair $(i,j) \in D^-_{\beta+1}$  
such that $\Alm_{i,j}$ is above the staircase (i.e.\ one of the red diamond vertices in Figure 1 must be above the staircase or $\beta$ would be at least one larger). Note that such an $(i,j)$ has the form $(i, i-(k-\beta)+1)=(i,i-\alpha+1)$ where $i \in \{\alpha, \dots, k\}$.   
Similarly, since $1< \alpha < k$ is also chosen to be maximal, there exists a pair $(p,\beta+p-1) \in D^+_{\alpha+1}$ with $p \in \{1, \dots, \alpha+1\}$ such that $\Alm_{p,q}$ lies below the staircase (i.e\ one of the green triangle vertices in Figure 1).
\begin{center}
\begin{figure}[h!]
\captionsetup{width=\linewidth}
  \begin{tikzpicture}[scale=0.67]
        \node[vertex] at (1,1) {};
    \node[vertex] at (1,2) {};
    \node[vertex] at (1,3) {};
    \node[vertex] at (1,5) {};
    \node[vertex] at (2,1) {};
    \node[vertex] at (2,2) {};
    \node[vertex] at (2,4) {};
    \node[vertex] at (2,5) {};
    \node[vertex] at (3,1) {};
    \node[vertex] at (3,3) {};
    \node[vertex] at (3,4) {};
    \node[vertex] at (4,2) {};
    \node[vertex] at (4,3) {};
    \node[vertex] at (4,5) {};
    \node[vertex] at (5,1) {};
    \node[vertex] at (5,2) {};
    \node[vertex] at (5,4) {};
    \node[vertex] at (5,5) {};
\draw [draw=red, fill=red, opacity=0.2]
       (0.5,0.5) -- (0.5,4) -- (4,0.5) -- cycle;
\draw [draw=green, fill=green, opacity=0.2]
       (3,5.5) -- (5.5,5.5) -- (5.5,3) -- cycle;
    \fill (1,4) node[redvertex] {};
    \fill (2,3) node[redvertex] {};
    \fill (3,2) node[redvertex] {};
    \fill (4,1) node[redvertex] {};
    \fill (3,5) node[greenvertex] {};
    \fill (4,4) node[greenvertex] {};
    \fill (5,3) node[greenvertex] {};
\draw [thick, decoration={brace, mirror, raise=0.1cm}, decorate] (0.5,0.5)--(3.5,0.5);
\node at (2,-0.2) {$\beta$};
\draw [thick, decoration={brace, raise=0.1cm}, decorate] (3.5,5.5)--(5.5,5.5);
\node at (4.5,6.2) {$\alpha$};
  \end{tikzpicture}
\hspace{0.5cm}
  \begin{tikzpicture}[scale=0.67]
    \node[vertex] at (1,1) {};
    \node[vertex] at (1,2) {};
    \node[vertex] at (1,3) {};
    \node[vertex] at (1,5) {};
    \node[vertex] at (2,1) {};
    \node[vertex] at (2,2) {};
    \node[vertex] at (2,4) {};
    \node[vertex] at (2,5) {};
    \node[vertex] at (3,1) {};
    \node[vertex] at (3,3) {};
    \node[vertex] at (3,4) {};
    \node[vertex] at (4,2) {};
    \node[vertex] at (4,3) {};
    \node[vertex] at (4,5) {};
    \node[vertex] at (5,1) {};
    \node[vertex] at (5,2) {};
    \node[vertex] at (5,4) {};
    \node[vertex] at (5,5) {};
\node[bigvertex] at (2,3) {};
 \draw[draw=yellow, fill=yellow, opacity=0.2] (1.5,5.5) -- (1.5,2.5)--(5.5,2.5)--(5.5,5.5) -- cycle;
\draw [draw=red, fill=red, opacity=0.2]
       (0.5,0.5) -- (0.5,4) -- (4,0.5) -- cycle;
\draw [draw=green, fill=green, opacity=0.2]
       (3,5.5) -- (5.5,5.5) -- (5.5,3) -- cycle;
    \fill (1,4) node[redvertex] {};
    \fill (2,3) node[redvertex] {};
    \fill (3,2) node[redvertex] {};
    \fill (4,1) node[redvertex] {};
    \fill (3,5) node[greenvertex] {};
    \fill (4,4) node[greenvertex] {};
    \fill (5,3) node[greenvertex] {};
\draw [thick, decoration={brace, mirror, raise=0.1cm}, decorate] (0.5,0.5)--(3.5,0.5);
\node at (2,-0.2) {$\beta$};
\draw [thick, decoration={brace, raise=0.1cm}, decorate] (3.5,5.5)--(5.5,5.5);
\node at (4.5,6.2) {$\alpha$};
  \end{tikzpicture}
\hspace{0.5cm}
  \begin{tikzpicture}[scale=0.67]
        \node[vertex] at (1,1) {};
    \node[vertex] at (1,2) {};
    \node[vertex] at (1,3) {};
    \node[vertex] at (1,5) {};
    \node[vertex] at (2,1) {};
    \node[vertex] at (2,2) {};
    \node[vertex] at (2,4) {};
    \node[vertex] at (2,5) {};
    \node[vertex] at (3,1) {};
    \node[vertex] at (3,3) {};
    \node[vertex] at (3,4) {};
    \node[vertex] at (4,2) {};
    \node[vertex] at (4,3) {};
    \node[vertex] at (4,5) {};
    \node[vertex] at (5,1) {};
    \node[vertex] at (5,2) {};
    \node[vertex] at (5,4) {};
    \node[vertex] at (5,5) {};
\node[bigvertex] at (4,1) {};
 \draw[draw=yellow, fill=yellow, opacity=0.2] (3.5,5.5) -- (3.5,0.5)--(5.5,0.5)--(5.5,5.5) -- cycle;
\draw [draw=red, fill=red, opacity=0.2] (0.5,0.5) -- (0.5,4) -- (4,0.5) -- cycle;
\draw [draw=green, fill=green, opacity=0.2] (3,5.5) -- (5.5,5.5) -- (5.5,3) -- cycle;
    \fill (1,4) node[redvertex] {};
    \fill (2,3) node[redvertex] {};
    \fill (3,2) node[redvertex] {};
    \fill (4,1) node[redvertex] {};
    \fill (3,5) node[greenvertex] {};
    \fill (4,4) node[greenvertex] {};
    \fill (5,3) node[greenvertex] {};
\draw [thick, decoration={brace, mirror, raise=0.1cm}, decorate] (0.5,0.5)--(3.5,0.5);
\node at (2,-0.2) {$\beta$};
\draw [thick, decoration={brace, raise=0.1cm}, decorate] (3.5,5.5)--(5.5,5.5);
\node at (4.5,6.2) {$\alpha$};
\draw (0.5,5.5)--(3.5,5.5)--(3.5,0.5)--(5.5,0.5);
  \end{tikzpicture}
\caption{With $k=5$, $\beta=3$ and $\alpha=2$, one of the red diamond vertices must lie above the staircase (not drawn), and one of the green triangle vertices must lie below. If one of the inner diamond vertices lies above the staircase, the second picture shows all the triangle vertices must also lie above. The last picture shows choosing one of the outer ones leaves one triangle vertex that may lie below.}
\end{figure}
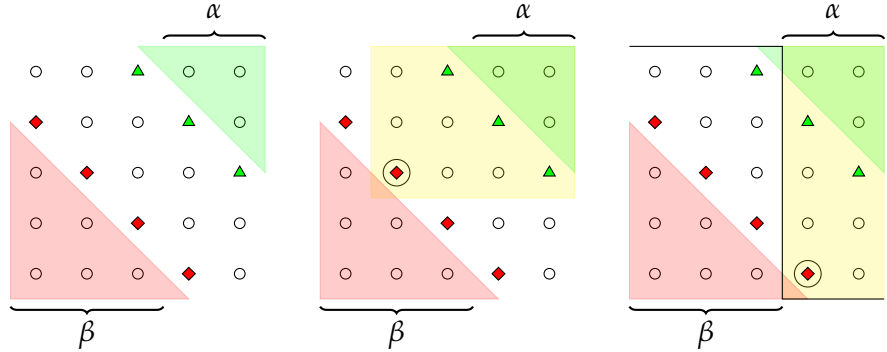
\end{center}
Suppose $\Alm_{i,i-\alpha+1}$ lies above the staircase for some $i \in \{\alpha+1, \dots, k-1\}$ (i.e.\ one of the inner vertices on the diagonal $D_{\beta+1}^-$). 
Then for all $p \in \{1, \dots, \alpha+1\}$, we have $p \leq i$ and 
\begin{align*}
i-\alpha+1 \leq (k-1)-\alpha+1= k-\alpha = \beta \leq \beta+p-1 
\end{align*}
and hence by Lemma \ref{lem: regions above}, all the points $\Alm_{p, \beta+p-1}$ with $p \in \{1, \dots, \alpha+1\}$ (all the triangle vertices in Figure 1) lie above the staircase. Or equivalently, the entire diagonal $D_{\alpha+1}^+$ lies above the staircase, contradicting the maximality of $\alpha$. Therefore, we must have either $\Alm_{\alpha,1}$ 
or $\Alm_{k,\beta+1}$ lying above the staircase.\\

If $\Alm_{k,\beta+1}$ lies above the staircase then, similar to above, all the points $\Alm_{p, \beta+p-1}$ with $p \in \{2, \dots, \alpha+1\}$ lie above the staircase, and thus $\Alm_{1,\beta}$ must lie below the staircase, so as not to contradict the maximality of $\alpha$. Thus, we have $\Alm_{k,\beta+1}$ above the staircase and $\Alm_{1,\beta}$ below the staircase and so the staircase must be:  

\begin{center}
\begin{tikzpicture}[scale=0.5]
 \draw (6.5,4.5) -- (8.5,4.5)--(8.5,0.5)--(10.5,0.5);
\node[vertex] at (8.1,4.1) {};
\node at (8,3.7) {\scalebox{0.6}{$1,\beta$}}; 
\node[fvertex] at (8.9,0.9) {};
\node at (9.3,1.3) {\scalebox{0.6}{$k,\beta+1$}}; 
\end{tikzpicture}
\end{center}

If $\Alm_{\alpha,1}$ lies above the staircase then all the points $\Alm_{p, \beta+p-1}$ with $p \in \{1, \dots, \alpha\}$ must lie above the staircase, and thus $\Alm_{\alpha+1,k}$ must lie below the staircase, so as not to contradict the maximality of $\alpha$. Hence, we have $\Alm_{\alpha,1}$ above the staircase and $\Alm_{\alpha+1,k}$ below the staircase and so the staircase must be:  
\begin{center}
\begin{tikzpicture}[scale=0.5]
 \draw (12.5,4.5) -- (12.5,2.5)--(16.5,2.5)--(16.5,0.5);
\node[vertex] at (16.1,2.1) {};
\node at (13,3.4) {\scalebox{0.6}{$\alpha,1$}}; 
\node[fvertex] at (12.9,2.9) {};
\node at (15.7,1.5) {\scalebox{0.6}{$\alpha+1,k$}};
\end{tikzpicture}
\end{center}

\end{proof}

\subsection{Dimension Formula} \label{S:dimension}
Now we may use the combinatorial tool developed Section \ref{S:tool} to provide a formula for calculating the dimension of $\Ext^1(I(\unl), I(\um))$.
\begin{theorem}\label{T: ext dimension}
Given two $k$-subsets $\unl$ and $\um$, 
\begin{align*}
\cdim(\Ext^1(I(\unl), I(\um)))=  \alpha(\unl,\um) +\beta(\unl,\um) - k - | \unl \cap \um |. 
\end{align*}
\end{theorem}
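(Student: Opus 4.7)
The plan is to compute $\cdim \Ext^1(I(\unl), I(\um))$ by first constructing an explicit graded free resolution of $I(\unl)$ coming from its matrix factorization structure, and then matching the resulting cohomology combinatorially to the staircase invariants $\alpha(\unl,\um)$ and $\beta(\unl,\um)$.

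To build the resolution, I would exploit that $I(\unl)$ has $k$ minimal monomial generators $g_p = x^{p-1}y^{i_{k-p}}$ (with $i_0 \colonequals 0$) of internal degrees $\ell_p$, giving a surjection $F_0 \colonequals \bigoplus_{p=1}^{k} R(-\ell_p) \twoheadrightarrow I(\unl)$. The minimal syzygies come from the relations $x\cdot g_p = y^{a_p}\, g_{p+1}$ (with $a_p \colonequals i_{k-p} - i_{k-p-1} \geq 0$) for $p = 1, \dots, k-1$, together with $x\cdot g_k = 0$. These assemble into an upper bidiagonal matrix $\Phi \colon F_1 \to F_0$ of determinant $x^k$, yielding a $k\times k$ matrix factorization of $x^k$, and by $2$-periodicity a full graded free resolution of $I(\unl)$ in $\cC$.

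Applying $\grHom(-, I(\um))$ and restricting to degree zero, a cocycle is a tuple $(\phi_1, \dots, \phi_k)$ with $\phi_p \in I(\um)$ of internal degree $\ell_p$. Each $\phi_p$ decomposes into monomial summands indexed by $q \in \{1,\dots,k\}$, where the monomial living at position $(p,q)$ exists precisely when the inequality $\ell_p \leq m_q$ (or $\ell_p < m_q$) holds, matching the fill patterns of $\Alm$ (or $\Blm$). The induced differential multiplies by $x$ and by $y^{a_p}$, so a monomial survives in cohomology exactly when its grid position cannot be reached by these actions from an adjacent monomial. A case analysis identifies the surviving positions as: (i) lattice points on upper diagonals $D_p^+$ lying fully above the $\Alm$-staircase, which contribute $\alpha$ classes; and (ii) lattice points on lower diagonals $D_p^-$ lying fully below the $\Blm$-staircase, which contribute $\beta$ classes.

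Finally, we correct for double counting: the main anti-diagonal $D_k^+ = D_k^-$ is counted in both (i) and (ii) and must be subtracted once (giving $-k$); and at the positions where $\ell_p = m_q$ (which is precisely when the common value lies in $\unl\cap\um$), $\Alm$ is filled but $\Blm$ is empty, so the corresponding upper and lower contributions coincide with an element already in the image of $\Phi^*$, subtracting a further $|\unl \cap \um|$. Summing yields $\alpha + \beta - k - |\unl\cap\um|$, as claimed. The main obstacle is this last matching step: careful bookkeeping is required to show which monomial cocycles are independent versus which are absorbed into the image of $\Phi^*$, particularly along the main anti-diagonal and at the agreement positions where $\Alm$ and $\Blm$ bifurcate. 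A reduction to the disjoint case (cf.\ Section \ref{S:reduction}), where $\Alm = \Blm$ and only the single staircase path needs to be analyzed, would streamline the case analysis before lifting back to the general setting.
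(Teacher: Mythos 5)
Your overall framework is the same as the paper's: write down the upper-triangular matrix factorisation $(M,N)$ of $x^k$ presenting $I(\unl)$, apply $\grHom(-,I(\um))$, and compute the degree-zero part of the resulting two-periodic complex $\mathbf{J}\xrightarrow{N^T}\mathbf{J}(1)\xrightarrow{M^T}\mathbf{J}(k)$. You also correctly observe that the coefficient $a_{pq}$ (resp.\ $b_{pq}$) in a degree-zero element of $\mathbf{J}$ (resp.\ $\mathbf{J}(1)$) can be nonzero precisely when $\ell_{k+1-p}\leq m_{k+1-q}$ (resp.\ $<$), mirroring the fill patterns of $\Alm$ and $\Blm$. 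That is exactly what the paper does.

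However, the final step, the combinatorial combination, is where your argument has a real gap rather than a missing detail. The paper does not extract $\alpha$ and $\beta$ as $\alpha$ surviving classes from upper diagonals of $\Alm$ plus $\beta$ surviving classes from lower diagonals of $\Blm$ with a $-k$ overlap correction. Instead it argues by rank--nullity: $\cdim\Ext^1(I,J)=\cdim\ker(M^T)_0-\cdim\mathrm{im}(N^T)_0$, and then
\[
\cdim\ker(M^T)_0=\cdim\mathbf{J}(1)_0-\cdim\mathrm{im}(M^T)_0,\qquad
\cdim\mathrm{im}(N^T)_0=\cdim\mathbf{J}_0-\cdim\ker(N^T)_0 .
\]
The lemmas then show $\cdim\ker(N^T)_0=\alpha$, $\cdim\mathrm{im}(M^T)_0=k-\beta$, and $\cdim\mathbf{J}_0-\cdim\mathbf{J}(1)_0=|\unl\cap\um|$. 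The $-k$ in the final formula arises from $\cdim\mathrm{im}(M^T)_0=k-\beta$, not from ``subtracting the doubly counted anti-diagonal''; your attribution of $-k$ to the one diagonal $D_k^+=D_k^-$ cannot be right, as a single overcounted diagonal would give $-1$, not $-k$. Likewise the $-|\unl\cap\um|$ is not a statement about absorbing classes into $\mathrm{im}(\Phi^*)$, but simply the difference $\cdim\mathbf{J}(1)_0-\cdim\mathbf{J}_0$. Moreover $\alpha$ is not a count of cohomology classes at all: it is the dimension of $\ker(N^T)_0\subseteq\mathbf{J}_0$, which lives one step \emph{before} the cocycles of $\Ext^1$, and only contributes after passing through rank--nullity. (Relatedly, you place your cocycles $\phi_p\in I(\um)$ in internal degree $\ell_p$, which is the degree appropriate to $\mathbf{J}_0$, not to the codomain term $\mathbf{J}(1)_0$ where $\Ext^1$ lives.)

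Finally, the suggestion to reduce to the disjoint case is the wrong move here. Section~\ref{S:reduction} of the paper reduces to the disjoint case only to prove the compatibility criterion of Theorem~\ref{T:compatible}; the dimension formula itself is proved in full generality directly from the four lemmas above, and cannot obviously be reduced to the disjoint case because $\Alm$ and $\Blm$ genuinely differ when $\unl\cap\um\neq\emptyset$ and both grids are required. What you would need to fill the gap is precisely the content of Lemmas~\ref{lem: dimension first term}, \ref{lem: dimension second term}, \ref{lem: kernel dimension}, \ref{cor: image dimension} and Corollary~\ref{cor: dimension difference}: explicit computations of $\cdim\mathbf{J}_0$, $\cdim\mathbf{J}(1)_0$, $\cdim\ker(N^T)_0$, and $\cdim\mathrm{im}(M^T)_0$ as functions of the staircase data.
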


\begin{remark}
By Proposition \ref{P: stably 2CY}, the subcategory $\mathrm{MCM_{\bZ}^0}R$ of $\mathcal{C}$ consisting of generically free modules is stably 2-Calabi-Yau, and thus for any two generically free modules $M$ and $N$ in $\mathcal{C}$, it immediately follows that 
\begin{align*}
\cdim(\Ext^1(M, N))=\cdim(\Ext^1(N, M)).
\end{align*}
It is easy to see that by combining Lemma \ref{L:symmetry} and Theorem \ref{T: ext dimension}, our combinatorial tool allows us to verify this symmetry directly for the generically free modules of rank 1.
\end{remark}

To prove Theorem \ref{T: ext dimension}, we start by fixing the following notation:
\begin{itemize}
\item 
$I \colonequals I(\unl) =(x^{k-1}, x^{k-2}y^{i_1}, x^{k-3}y^{i_2}, \ldots, xy^{i_{k-2}}, y^{i_{k-1}})(i_k) $
where $i_k = k-1-\ell_k$ and $i_{k-p}=\ell_k-\ell_p-(k-p)$;
\item $J \colonequals I(\um) =(x^{k-1}, x^{k-2}y^{j_1}, x^{k-3}y^{j_2}, \ldots, xy^{j_{k-2}}, y^{j_{k-1}})(j_k) $
where $j_k = k-1-m_k$ and $j_{k-p}=m_k-m_p-(k-p)$;
\item $\mathbf{J} \colonequals J(\deg_I(x^{k-1})) \oplus J(\deg_I(x^{k-2}y^{i_1})) \oplus \dots \oplus J(\deg_I(y^{i_{k-1}}))$. This means that an element of $\mathbf{J}$ is a vector with $m$-th component in the ideal $J$ shifted by $\deg_I(x^{k-m}y^{i_{m-1}})$.
\end{itemize}

Our first observation is that we may assume that $i_k=0$, or equivalently $\ell_k=k-1$. Indeed, if this doesn't hold, we may shift both $I$ and $J$ by $-i_k$ to get to this setting, which will not affect the Ext calculation as we have only shifted the grading. Moreover, this corresponds to shifting both $\unl$ and $\um$ by $i_k$ and so it does not change $\Alm$ or $\Blm$ in any way. For future use, also note that 
\begin{align}
m_p = -j_k -j_{k-p}+p-1 \quad \text{and} \quad \ell_p = -i_{k-p}+p-1 \label{pluckercoord}
\end{align}
where, for ease of notation, we set $i_0=0=j_0$.

\subsubsection{Matrix Factorisations}
In the ring $R= \mathbb{C}[x,y]/(x^k)$ a matrix factorisation for the ideal 
\[
I = (x^{k-1},x^{k-2}y^{i_1},x^{k-3}y^{i_2}, \ldots, xy^{i_{k-2}}, y^{i_{k-1}})
\]
where $0 \leq i_1 \leq i_2 \leq \ldots \leq i_{k-1}$ is given as 
\[
R^k \xrightarrow{M} R^k \xrightarrow{N} R^k \to I \to 0
\]
where $M, N$ are the $k \times k$ upper triangular matrices: 

\begin{align*}
 M =
\begin{pmatrix}
x^{k-1} & x^{k-2}y^{i_1} & x^{k-3}y^{i_2} & \ldots & xy^{i_{k-2}} & y^{i_{k-1}}\\
0 & x^{k-1} & x^{k-2}y^{i_2-i_1} & \ldots & x^2y^{i_{k-2}-i_1} & xy^{i_{k-1}-i_1}\\
0 & 0 & x^{k-1} & \dots & x^3y^{i_{k-2}-i_2} & x^2y^{i_{k-1}-i_2}\\
& & & \ddots & \vdots & \vdots \\
& & & & x^{k-1} & x^{k-2}y^{i_{k-1}-i_{k-2}}\\
& & & & 0 & x^{k-1}\\
\end{pmatrix}
\end{align*}

and 

\begin{align*}
N = \begin{pmatrix}
x & -y^{i_1} & 0 & 0 & &   \\
0 & x & -y^{i_2-i_1} & 0 & &  \\
0 & 0 & x & -y^{i_3-i_2} & &   \\
&&& \ddots & \ddots & & \\
&&&& x & -y^{i_{k-2}-i_{k-3}} & 0\\
&&&& 0 & x & -y^{i_{k-1}-i_{k-2}}  \\
&&& & 0 & 0 & x  \\
\end{pmatrix}.
\end{align*}

In particular, a graded projective presentation of $I$ is 
\begin{align*}
R(-\deg_I(x^{k-1})-k) \oplus R(-\deg_I(&x^{k-2}y^{i_1})-k) \oplus \dots \oplus R(-\deg_I(y^{i_{k-1}})-k) \\
&\downarrow M\\
 R(-\deg_I(x^{k-1})-1) \oplus R(-\deg_I(&x^{k-2}y^{i_1})-1) \oplus \dots \oplus R(-\deg_I(y^{i_{k-1}})-1)\\ 
&\downarrow N\\
R(-\deg_I(x^{k-1})) \oplus R(-\deg_I(&x^{k-2}y^{i_1})) \oplus \dots \oplus R(-\deg_I(y^{i_{k-1}})) \\
&\downarrow \\
 &I \\
&\downarrow\\
 &0
\end{align*}

We remark that the matrix factorizations are not  reduced if some of the $i_j$'s are equal.

\subsubsection{Strategy}
\label{S: Strategy} To calculate $\Ext^1(I,J)$, take the graded projective presentation of $I$ above and apply the graded $\Hom^\mathbb{Z}(-,J)= \bigoplus_{n\in \mathbb{Z}} \Hom(-,J(n))$. Since $\Hom^\mathbb{Z}(R(a),J) \cong J(-a)$, this gives
\begin{align*}
\mathbf{J} \xrightarrow{N^T} \mathbf{J}(1) \xrightarrow{M^T} \mathbf{J}(k) 
\end{align*}
and to obtain $\Ext^1(I,J)$, we calculate $(\ker(M^T)/\mathrm{im}(N^T))_0$ or equivalently $\ker(M^T)_0/\mathrm{im}(N^T)_0$. In fact, we will only be interested in the dimension of the Ext group which we can calculate as 
\begin{align*}
\cdim(\Ext^1(I,J))=\cdim(\ker(M^T)_0) - \cdim(\mathrm{im}(N^T)_0).
\end{align*}
Since the maps are graded and each of the degree zero parts are finite dimensional $\mathbb{C}$-vector spaces, we may use the standard rank-nullity theorem to say
\begin{align*}
\cdim(\ker(M^T)_0)= \cdim( \mathbf{J}(1)_0) - \cdim(\mathrm{im}(M^T)_0)
\end{align*}
and 
\begin{align*}
\cdim(\mathrm{im}(N^T)_0)= \cdim(\mathbf{J}_0) - \cdim(\ker(N^T)_0).
\end{align*}
So our strategy to prove Theorem \ref{T: ext dimension} is to determine the complex dimensions of $\mathbf{J}_0, \mathbf{J}(1)_0, \ker(N^T)_0$ and $\mathrm{im}(M^T)_0$, and then to combine them to determine $\cdim(\Ext^1(I,J))$.
\subsubsection{Calculating Dimensions}
\begin{lemma} \label{lem: generic first term}
A degree zero element of $\mathbf{J}$ has the following form,
\begin{align*}
\underline{a} = 
\begin{pmatrix}
a_{11}x^{k-1}y^{-j_{k}}&+ &a_{12}x^{k-2}y^{-j_{k}-1} &+& \dots& + &a_{1k} y^{-j_{k}+1-k} \\
a_{21}x^{k-1}y^{i_{1}-j_{k}+1}&+ &a_{22}x^{k-2}y^{i_{1}-j_{k}} &+& \dots &+& a_{2k} y^{i_{1}-j_{k}+2-k} \\
\vdots & \vdots & \vdots & \vdots &  & \vdots & \vdots\\
a_{k1}x^{k-1}y^{i_{k-1}-j_{k}+k-1}&+& a_{k2}x^{k-2}y^{i_{k-1}-j_{k}+k-2}& +& \dots& + &a_{kk} y^{i_{k-1}-j_{k}}
\end{pmatrix}
\end{align*} 
where $a_{pq}$ is the coefficient of $x^{k-q}y^{p-q+i_{p-1}-j_k}$ and $a_{pq} \in \mathbb{C}$ can be nonzero if and only if $\ell_{k+1-p} \leq m_{k+1-q}$.
\end{lemma}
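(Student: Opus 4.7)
The plan is to compute the degree-zero part of $\mathbf{J}$ componentwise and then use the identities relating $(i_p, j_p)$ to $(\ell_p, m_p)$ from \eqref{pluckercoord} to translate the monomial-membership condition into the claimed inequality $\ell_{k+1-p} \leq m_{k+1-q}$.

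First I would unpack what it means for the $p$-th component $J(\deg_I(x^{k-p}y^{i_{p-1}}))$ to have a degree-zero element. Since a shift by $n$ satisfies $M(n)_0 = M_n$, a degree-zero element in the $p$-th summand is an element of $J$ of degree $\deg_I(x^{k-p}y^{i_{p-1}}) = (k-p) - i_{p-1}$ (using the standing assumption $i_k = 0$). Because $J = (x^{k-1}, x^{k-2}y^{j_1}, \ldots, y^{j_{k-1}})(j_k)$ and a grading shift only changes degrees, such an element corresponds to an element of the underlying ideal in $R$ of internal degree $(k-p) - i_{p-1} + j_k$.

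Next, I would use Lemma \ref{L:homogeneous ideals}, or rather the explicit monomial description of the underlying ideal, to write down a $\mathbb{C}$-basis in the given degree. The ideal is generated by $x^{k-q}y^{j_{q-1}}$ for $q = 1, \ldots, k$ (with $j_0 = 0$), so its homogeneous elements are $\mathbb{C}$-linear combinations of monomials $x^{k-q}y^{s}$ with $s \geq j_{q-1}$. Solving $(k-q) - s = (k-p) - i_{p-1} + j_k$ gives $s = p - q + i_{p-1} - j_k$, which matches the $y$-exponent in the claimed matrix entry, so the generic form of the element is established.

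Finally, the coefficient $a_{pq}$ of $x^{k-q}y^{p-q+i_{p-1}-j_k}$ is allowed to be nonzero precisely when the monomial lies in the underlying ideal, i.e.\ when $p - q + i_{p-1} - j_k \geq j_{q-1}$, or equivalently $p + i_{p-1} \geq q + j_{q-1} + j_k$. Using \eqref{pluckercoord}, I compute $p + i_{p-1} = k - \ell_{k+1-p}$ and $q + j_{q-1} + j_k = k - m_{k+1-q}$, so the inequality rearranges to $\ell_{k+1-p} \leq m_{k+1-q}$, as claimed. There is no substantial obstacle here; the only point that requires care is keeping track of the three distinct shifts (the grading shift of $I$ by $i_k$, the grading shift of $J$ by $j_k$, and the shift of each summand of $\mathbf{J}$ by $\deg_I$ of the corresponding generator), and the re-indexing $p \mapsto k+1-p$ that appears when passing from internal data $(i_{p-1}, j_{q-1})$ to Plücker data $(\ell_{k+1-p}, m_{k+1-q})$.
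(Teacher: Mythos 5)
Your proposal is correct and proceeds along essentially the same lines as the paper's own proof: identify the degree-zero slice of each shifted copy of $J$, observe that membership of the relevant monomial in $J$ reduces to the inequality $p - q + i_{p-1} - j_k \geq j_{q-1}$, and translate this via the relations \eqref{pluckercoord} into $\ell_{k+1-p} \leq m_{k+1-q}$. The only cosmetic differences are that you rearrange the inequality to $p + i_{p-1} \geq q + j_{q-1} + j_k$ before substituting, where the paper pushes through $-j_{q-1}-j_k-q \geq -i_{p-1}-p$, and that you cite Lemma~\ref{L:homogeneous ideals} explicitly for the monomial description (which the paper takes as read from its explicit form of $J$).
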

\begin{proof}
Recall that $J(n)_0=J_n$ and thus, remembering that $J$ is an ideal shifted by $j_k$, monomials $x^ay^b$ lie in $J(n)_0$ precisely when $a-b=n+j_k$. In particular, if $a=k-q$ for some $q=1, \dots, k$, then 
\begin{align*}
b=k-q-n-j_k.
\end{align*}
Therefore, there is a $\mathbb{C}$-basis for $J(n)_0$ which consists of the subset of
\begin{align*}
x^{k-1}y^{k-1-n-j_{k}}, x^{k-2}y^{k-2-n-j_{k}}, \dots, xy^{1-n-j_{k}}, y^{-n-j_{k}}
\end{align*}
which lie in $J$. In particular, for each $p=1, \dots, k$ a degree zero element of $J(\deg(x^{k-p}y^{i_{p-1}}))$ is
\begin{align*}
\sum_{q=1}^k a_{pq}x^{k-q}y^{p-q+i_{p-1}-j_k}
\end{align*}
where $a_{pq}$ can be nonzero only if $x^{k-q}y^{p-q+i_{p-1}-j_k} \in J$, or equivalently,
\begin{align*}
p-q+i_{p-1}-j_k \geq j_{q-1} &\iff -j_{q-1}-j_k -q \geq -i_{p-1}-p \\
&\iff -j_{q-1}-j_k + (k+1-q)- 1 \geq-i_{p-1}+ (k +1-p)-1\\
&\iff m_{k+1-q} \geq \ell_{k+1-p}.  \tag{using \eqref{pluckercoord}}
\end{align*} \qedhere
\end{proof}

\begin{example} \label{Ex:bfJandstaircase}
Take $k=3$ and consider $\unl=(-2,0,2)$ and $\um=(-1,2,3)$. These correspond to ideals
\begin{align*}
I= (x^2,xy,y^2) \quad \text{and} \quad J=(x^2,x,y^2)(-1) .
\end{align*}
In this case a degree zero element of $\mathbf{J}=J(2) \oplus J(0) \oplus J(-2)$ is
\begin{align} \label{eq: ex matrix of coeff}
\begin{pmatrix}
a_{11}x^2y &+ &a_{12}x & & \\
a_{21}x^2y^3 &+ & a_{22}xy^2 & &\\
a_{31}x^2y^5 &+ & a_{32}xy^4 &+ & a_{33}y^3
\end{pmatrix}
\end{align}
where $a_{ij} \in \mathbb{C}$. Notice that $a_{13}$ and $a_{23}$ do not appear since $y^{-1}, y \notin J$. Compare this with $\Alm$, and its image after rotating by a half turn:
\begin{center}
  \begin{tikzpicture}[scale=0.67]
\node at (-1.5,2) {$\Alm=$};
    \foreach \x in {1,2,3}
    \foreach \y in {1,2,3}
    {
    \node[vertex] at (\x,\y) {};
    }
    \fill (1,4-1) node[fvertex] {};
    \fill (2,4-1) node[fvertex] {};
    \fill (3,4-1) node[fvertex] {};
    \fill (2,4-2) node[fvertex] {};
    \fill (3,4-2) node[fvertex] {};
    \fill (2,4-3) node[fvertex] {};
    \fill (3,4-3) node[fvertex] {};
 \draw (0.5,3.5) -- (0.5,2.5)--(1.5,2.5)--(1.5,0.5)--(3.5,0.5);
  \end{tikzpicture}
\hspace{2cm}
  \begin{tikzpicture}[scale=0.67]
\node at (-2.5,2) {\text{`rotated'} $\Alm=$};
    \foreach \x in {1,2,3}
    \foreach \y in {1,2,3}
    {
    \node[vertex] at (\x,\y) {};
    }
    \fill (1,4-1) node[fvertex] {};
    \fill (2,4-1) node[fvertex] {};
    \fill (1,4-2) node[fvertex] {};
    \fill (2,4-2) node[fvertex] {};
    \fill (1,4-3) node[fvertex] {};
    \fill (2,4-3) node[fvertex] {};
    \fill (3,4-3) node[fvertex] {};
 \draw (0.5,3.5) -- (2.5,3.5)--(2.5,1.5)--(3.5,1.5)--(3.5,0.5);
  \end{tikzpicture}
\end{center}
After rotation, the shape formed by the staircase path is precisely the same as that of the possibly nonzero coefficients in \eqref{eq: ex matrix of coeff}. This follows since $a_{ij}$ can be nonzero if and only if $\ell_{k+1-i} \leq m_{k+1-j}$ which by definition is if and only if $\Alm_{k+1-i,k+1-j}$ is shaded.
\end{example}

\begin{lemma} \label{lem: dimension first term}
With the setup above, $\cdim(\mathbf{J}_0) = | \{ (i,j) \mid \text{$1 \leq i,j \leq k$ and $\Alm_{i,j}$ is shaded}\}|$.
\end{lemma}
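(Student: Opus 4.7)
The plan is to derive this directly from Lemma \ref{lem: generic first term} by a straightforward counting argument. That lemma already gives a parametrisation of a general degree-zero element of $\mathbf{J}$: it is a $k$-tuple (one component for each direct summand $J(\deg_I(x^{k-p}y^{i_{p-1}}))$) whose $p$-th component is a $\mathbb{C}$-linear combination $\sum_{q=1}^k a_{pq} x^{k-q} y^{p-q+i_{p-1}-j_k}$, with the constraint that $a_{pq}$ can be nonzero if and only if $\ell_{k+1-p} \leq m_{k+1-q}$.

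First I would observe that within a fixed summand (i.e.\ fixed $p$) the allowed monomials $x^{k-q}y^{p-q+i_{p-1}-j_k}$ have distinct $x$-exponents as $q$ varies over $\{1,\dots,k\}$, hence are linearly independent in $J(\deg_I(x^{k-p}y^{i_{p-1}}))_0$; across different summands they live in different direct factors, so they are independent overall. Thus these monomials form a $\mathbb{C}$-basis of $\mathbf{J}_0$, and its cardinality equals
\begin{equation*}
\cdim(\mathbf{J}_0) = \bigl|\{(p,q) \mid 1 \leq p,q \leq k,\ \ell_{k+1-p} \leq m_{k+1-q}\}\bigr|.
\end{equation*}

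Next I would translate this count into the shaded-vertex count of $\Alm$. By Definition \ref{D:A(l,m)}, the vertex $\Alm_{i,j}$ is shaded precisely when $\ell_i \leq m_j$. Applying the change of indices $i = k+1-p$ and $j = k+1-q$ (which is a bijection of $\{1,\dots,k\}^2$ onto itself), the condition $\ell_{k+1-p} \leq m_{k+1-q}$ becomes exactly ``$\Alm_{i,j}$ is shaded''. This yields
\begin{equation*}
\cdim(\mathbf{J}_0) = \bigl|\{(i,j) \mid 1 \leq i,j \leq k,\ \Alm_{i,j} \text{ is shaded}\}\bigr|,
\end{equation*}
which is exactly the claimed formula.

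There is no real obstacle here: the combinatorial identification was essentially already carried out in Example \ref{Ex:bfJandstaircase}, where the ``rotated'' picture of $\Alm$ is seen to coincide with the shape recording which $a_{pq}$ are allowed to be nonzero. The only small care needed is to be explicit that the allowed monomials are linearly independent; this is immediate since they are distinct monomials sitting in distinct direct summands of $\mathbf{J}$, and within one summand they are distinct elements of a monomial basis of $R$.
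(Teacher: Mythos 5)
Your proof is correct and follows essentially the same route as the paper: count the coefficients $a_{pq}$ that may be nonzero, identify this condition with shadedness of $\Alm_{k+1-p,k+1-q}$, and apply the half-turn bijection $(p,q)\mapsto(k+1-p,k+1-q)$ of the grid. You are slightly more explicit than the paper about linear independence of the allowed monomials, which the paper dismisses as clear, but the argument is otherwise identical.
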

\begin{proof}
It is clear from Lemma \ref{lem: generic first term} that 
\begin{align*}
\cdim(\mathbf{J}_0) = | \{ (i,j) \mid \text{$1 \leq i,j \leq k$ and $a_{ij}$ can be nonzero}\}|.
\end{align*}
Moreover, we know that $a_{ij}$ can be nonzero precisely when $\ell_{k+1-i} \leq m_{k+1-j}$ which, by definition, is if and only if $\Alm_{k+1-i,k+1-j}$ is shaded. Thus, 
\begin{align*}
\cdim(\mathbf{J}_0) = | \{ (i,j) \mid \text{$1 \leq i,j \leq k$ and $\Alm_{k+1-i,k+1-j}$ is shaded}\}|.
\end{align*}
The map $(i,j) \mapsto (k+1-i,k+1-j)$ precisely describes the rotation of the $(k \times k)$-grid as seen in Example \ref{Ex:bfJandstaircase}. Since this gives a bijection from $\{1, \dots, k\} \times \{1, \dots, k\}$ to itself, the right hand side is the same as $| \{ (i,j) \mid \text{$1 \leq i,j \leq k$ and $\Alm_{i,j}$ is shaded}\}|$ completing the proof.
\end{proof}

\begin{lemma} \label{lem: generic second term}
A degree zero element of $\mathbf{J}(1)$ has the following form,
\begin{align*}
\underline{b} = 
\begin{pmatrix}
b_{11}x^{k-1}y^{-j_{k}-1}&+ &b_{12}x^{k-2}y^{-j_{k}-2} &+& \dots& + &b_{1k} y^{j_{k}-k} \\
b_{21}x^{k-1}y^{i_{1}-j_{k}}&+ &b_{22}x^{k-2}y^{i_{1}-j_{k}-1} &+& \dots &+& b_{2k} y^{i_{1}-j_{k}+1-k} \\
\vdots & \vdots & \vdots & \vdots &  & \vdots & \vdots\\
b_{k1}x^{k-1}y^{i_{k-1}-j_{k}+k-2}&+& b_{k2}x^{k-2}y^{i_{k-1}-j_{k}+k-3}& +& \dots& + &b_{kk} y^{i_{k-1}-j_{k}-1}
\end{pmatrix}
\end{align*} 
where $b_{pq}$ is the coefficient of $x^{k-q}y^{p-q+i_{p-1}-j_k-1}$ and $b_{pq} \in \mathbb{C}$ can be nonzero if and only if $\ell_{k+1-p} < m_{k+1-q}$.
\end{lemma}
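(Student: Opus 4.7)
The plan is to closely mirror the proof of Lemma \ref{lem: generic first term}, since the only material change is the extra graded shift by $1$. This shift will turn the weak inequality $\ell_{k+1-p} \leq m_{k+1-q}$ controlling $\mathbf{J}$ into the strict inequality $\ell_{k+1-p} < m_{k+1-q}$ controlling $\mathbf{J}(1)$, which is exactly the filling rule distinguishing $B(\unl, \um)$ from $A(\unl, \um)$ in Definition \ref{D:A(l,m)}.

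First, I would compute the degree zero part of each summand of $\mathbf{J}(1)$. The $p$-th summand is $J(\deg_I(x^{k-p}y^{i_{p-1}})+1)$, whose degree zero part is $J_{\deg_I(x^{k-p}y^{i_{p-1}})+1}$. Using the reduction $i_k = 0$ from Section \ref{S: Strategy}, this index equals $k-p-i_{p-1}+1$. As in the previous proof, a monomial $x^{k-q}y^b$ lies in $J_m$ precisely when $(k-q)-b = m+j_k$, so solving for $b$ yields $b = p-q+i_{p-1}-j_k-1$. This immediately gives the displayed form of $\underline{b}$.

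Next, I would determine which coefficients $b_{pq}$ are allowed to be nonzero. The monomial $x^{k-q}y^{p-q+i_{p-1}-j_k-1}$ lies in $J$ if and only if $p-q+i_{p-1}-j_k-1 \geq j_{q-1}$. Rearranging this inequality and substituting the expressions for $\ell_{k+1-p}$ and $m_{k+1-q}$ from \eqref{pluckercoord} yields $\ell_{k+1-p} \leq m_{k+1-q} - 1$, which is equivalent to $\ell_{k+1-p} < m_{k+1-q}$, as required.

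No step here is a genuine obstacle: the entire argument is bookkeeping in parallel with the proof of Lemma \ref{lem: generic first term}. The one thing requiring care is tracking the extra $-1$ introduced by the graded shift, so that the final inequality becomes strict; this strictness is precisely what will later separate the two staircase paths in $A(\unl, \um)$ and $B(\unl, \um)$ and feed into the dimension formula of Theorem \ref{T: ext dimension}.
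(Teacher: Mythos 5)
Your proposal is correct and follows essentially the same route as the paper's own proof: both treat Lemma~\ref{lem: generic second term} as a rerun of the argument for Lemma~\ref{lem: generic first term} with the extra graded shift by $1$, track how that shift lowers each $y$-exponent by one (equivalently, raises each target degree by one), and observe that the resulting membership condition $p-q+i_{p-1}-j_k-1 \geq j_{q-1}$ rearranges via \eqref{pluckercoord} to $\ell_{k+1-p} < m_{k+1-q}$ instead of $\leq$. Your bookkeeping (use of $i_k=0$, the degree index $k-p-i_{p-1}+1$, the resulting $y$-exponent) checks out.
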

\begin{proof}
This proof is completely analogous to the proof of Lemma \ref{lem: generic first term}. For each $p=1, \dots, k$ a degree zero element of $J(\deg(x^{k-p}y^{i_{p-1}})+1)$ is
\begin{align*}
\sum_{q=1}^k b_{pq}x^{k-q}y^{p-q+i_{p-1}-j_k-1}
\end{align*}
(the $y$-index drops by one from Lemma \ref{lem: generic first term} since we have shifted by one) where $b_{pq}$ can be nonzero only if $x^{k-q}y^{p-q+i_{p-1}-j_k-1} \in J$, or equivalently,
\begin{align*}
p-q+i_{p-1}-j_k-1 \geq j_{q-1} &\iff p-q+i_{p-1}-j_k > j_{q-1} \\
&\iff -j_{q-1}-j_k -q > -i_{p-1}-p \\
&\iff -j_{q-1}-j_k + (k+1-q)- 1 >-i_{p-1}+ (k +1-p)-1\\
&\iff m_{k+1-q} > \ell_{k+1-p}.  \tag{using \eqref{pluckercoord}}
\end{align*} \qedhere 
\end{proof}
\begin{lemma} \label{lem: dimension second term}
With the setup above, $\cdim(\mathbf{J}(1)_0) = | \{ (i,j) \mid \text{$1 \leq i,j \leq k$ and $\Blm_{i,j}$ is shaded}\}|$.
\end{lemma}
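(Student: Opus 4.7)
The proof will mirror that of Lemma \ref{lem: dimension first term} almost verbatim, with the roles of $\Alm$ and $\Blm$ swapped, using Lemma \ref{lem: generic second term} in place of Lemma \ref{lem: generic first term}. The only genuine input needed is the dictionary between the coefficient patterns in a generic degree zero element of $\mathbf{J}(1)$ and the shaded entries of $\Blm$.

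The plan is as follows. First, by Lemma \ref{lem: generic second term} a generic degree zero element $\underline{b} \in \mathbf{J}(1)_0$ is parametrized by coefficients $b_{pq} \in \mathbb{C}$ for $1 \leq p,q \leq k$, so
\[
\cdim(\mathbf{J}(1)_0) = |\{(p,q) \mid 1 \leq p,q \leq k \text{ and } b_{pq} \text{ can be nonzero}\}|.
\]
Second, again by Lemma \ref{lem: generic second term}, the coefficient $b_{pq}$ can be nonzero precisely when $\ell_{k+1-p} < m_{k+1-q}$, which by Definition \ref{D:A(l,m)} is exactly the condition that $\Blm_{k+1-p,\,k+1-q}$ is shaded. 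Hence
\[
\cdim(\mathbf{J}(1)_0) = |\{(p,q) \mid 1 \leq p,q \leq k \text{ and } \Blm_{k+1-p,\,k+1-q} \text{ is shaded}\}|.
\]
Third, the map $(p,q) \mapsto (k+1-p, k+1-q)$ is a bijection from $\{1,\dots,k\} \times \{1,\dots,k\}$ to itself (it is the $180^\circ$ rotation of the $(k\times k)$-grid used in the proof of Lemma \ref{lem: dimension first term}), so reindexing the set on the right yields the claimed equality with $|\{(i,j) \mid 1 \leq i,j \leq k \text{ and } \Blm_{i,j} \text{ is shaded}\}|$.

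No step presents a real obstacle; the whole argument is essentially bookkeeping, and the only reason to write it out is that $\Blm$ (with strict inequality) replaces $\Alm$ (with weak inequality), matching the shift by one in the grading used to define $\mathbf{J}(1)$. The parallel with Lemma \ref{lem: dimension first term} is so close that a very short proof referring to it suffices.
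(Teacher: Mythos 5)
Your proof is correct and matches the paper's: the paper likewise proves this lemma by observing it is completely analogous to Lemma \ref{lem: dimension first term}, now reading off from Lemma \ref{lem: generic second term} that $b_{pq}$ can be nonzero exactly when $\ell_{k+1-p} < m_{k+1-q}$, which is the shading condition for $\Blm$, and then reindexing via the rotation $(p,q)\mapsto(k+1-p,k+1-q)$.
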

\begin{proof}
Completely analogous to Lemma \ref{lem: dimension first term}, but now using that, by definition, $m_{k+1-q} > \ell_{k+1-p}$ if and only if $\Blm_{k+1-i,k+1-j}$ is shaded.
\end{proof}
\begin{corollary}\label{cor: dimension difference}
With the setup above, $\cdim(\mathbf{J}_0)-\cdim(\mathbf{J}(1)_0)= | \unl \cap \um|$.
\end{corollary}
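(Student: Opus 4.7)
The plan is to compare the two dimensions term by term using the explicit descriptions provided by Lemmas \ref{lem: dimension first term} and \ref{lem: dimension second term}. By those lemmas we have
\begin{align*}
\cdim(\mathbf{J}_0) &= |\{(i,j)\mid 1\leq i,j\leq k,\ \Alm_{i,j}\text{ is shaded}\}|,\\
\cdim(\mathbf{J}(1)_0) &= |\{(i,j)\mid 1\leq i,j\leq k,\ \Blm_{i,j}\text{ is shaded}\}|,
\end{align*}
and by Definition \ref{D:A(l,m)} the shading conditions are $\ell_i\leq m_j$ and $\ell_i< m_j$ respectively. In particular, the set counted by $\cdim(\mathbf{J}(1)_0)$ is a subset of the one counted by $\cdim(\mathbf{J}_0)$, and the difference is exactly
\[
\cdim(\mathbf{J}_0)-\cdim(\mathbf{J}(1)_0) = |\{(i,j)\mid 1\leq i,j\leq k,\ \ell_i = m_j\}|.
\]

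The remaining task is to identify this set of coincidence pairs with $\unl\cap\um$. Since both tuples $\unl=(\ell_1,\dots,\ell_k)$ and $\um=(m_1,\dots,m_k)$ are strictly increasing, each common value $c\in\unl\cap\um$ can arise as $\ell_i=m_j$ for a unique index $i$ (determined by $\ell_i=c$) and a unique index $j$ (determined by $m_j=c$). This gives a well-defined map $c\mapsto(i,j)$ from $\unl\cap\um$ to the set of pairs above, which is clearly injective (as $c=\ell_i$ is recovered from $(i,j)$) and surjective (any pair with $\ell_i=m_j$ produces the common value $c=\ell_i=m_j\in\unl\cap\um$). Hence the two sets are in bijection, so
\[
\cdim(\mathbf{J}_0)-\cdim(\mathbf{J}(1)_0) = |\unl\cap\um|,
\]
as required. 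No obstacle of substance arises here; the only mild subtlety is making sure the strict monotonicity of $\unl$ and $\um$ is used to guarantee that coincidences $\ell_i=m_j$ are counted exactly once, which is precisely what produces $|\unl\cap\um|$ rather than a larger quantity.
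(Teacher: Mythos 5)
Your proposal is correct and follows essentially the same route as the paper's own proof: invoke Lemmas \ref{lem: dimension first term} and \ref{lem: dimension second term}, observe the $\Blm$-shaded set sits inside the $\Alm$-shaded set so the difference counts pairs with $\ell_i = m_j$, and then use strict monotonicity of the $k$-subsets to biject these pairs with $\unl \cap \um$. Nothing to add.
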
 
\begin{proof}
Using Lemmas \ref{lem: dimension first term} and \ref{lem: dimension second term},
\begin{align*}
\cdim(\mathbf{J}_0)-\cdim(\mathbf{J}(1)_0) = | \{ (i,j) &\mid \text{$1 \leq i,j \leq k$ and $\Alm_{i,j}$ is shaded}\}| \\
&- | \{ (i,j) \mid \text{$1 \leq i,j \leq k$ and $\Blm_{i,j}$ is shaded}\}|.
\end{align*}
Since $\ell_i < m_j$ implies $\ell_i \leq m_j$, it is clear that if $\Blm_{i,j}$ is shaded then so is $\Alm_{i,j}$ and hence the right hand side is simply 
\begin{align*}
| \{ (i,j) \mid \text{$1 \leq i,j \leq k$ and $\Alm_{i,j}$ is shaded and $\Blm_{i,j}$ is empty}\}| \\
=| \{ (i,j) \mid \text{$1 \leq i,j \leq k$ and $\ell_i \leq m_j$ and $\ell_i \geq m_j$}\}| \\
= | \{ (i,j) \mid \text{$1 \leq i,j \leq k$ and $\ell_i = m_j$}\}|.
\end{align*}
For each such pair $(i,j)$, it is clear there is a corresponding element of $\unl \cap \um$ and since $\unl$ and $\um$ are strictly increasing sequences, each element of $\unl \cap \um$ corresponds to a unique such pair $(i,j)$. Thus,
\begin{align*}
 | \{ (i,j) \mid \text{$1 \leq i,j \leq k$ and $\ell_i = m_j$}\}|=| \unl \cap \um |
\end{align*}
and so $\cdim(\mathbf{J}_0)-\cdim(\mathbf{J}(1)_0)=| \unl \cap \um |$ as required.
\end{proof}

\begin{lemma} \label{lem: kernel dimension}
With the setup above, $\cdim(\ker(N^T)_0)= \alpha(\unl,\um)$.
\end{lemma}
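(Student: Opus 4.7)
The plan is to write down $N^T$ explicitly, solve the linear system $N^T\underline{a}=0$ in $\mathbf{J}(1)_0$ for a generic degree-zero element $\underline{a}=(a_1,\dots,a_k)^T$ parameterised by the scalars $a_{pq}$ of Lemma \ref{lem: generic first term}, and show that each free parameter corresponds to an entirely shaded upper diagonal $D_p^+$ of $\Alm$.

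Since $N$ is bi-diagonal with $N_{p,p}=x$ and $N_{p,p+1}=-y^{i_p-i_{p-1}}$, the transpose $N^T$ imposes the relations
\[
    x\,a_1=0\qquad\text{and}\qquad x\,a_p=y^{i_{p-1}-i_{p-2}}\,a_{p-1}\quad(p\geq 2).
\]
Expanding $a_p=\sum_q a_{pq}\,x^{k-q}y^{p-q+i_{p-1}-j_k}$ and using $x\cdot x^{k-1}=0$ in $R$, the $q=1$ contribution to $x\,a_p$ vanishes; matching coefficients of the remaining monomials on both sides yields the two conditions
\[
    a_{p,q}=a_{p-1,q-1}\quad(p,q\geq 2),\qquad a_{1,q}=0\quad(q\geq 2).
\]
(The apparent extra condition $a_{p-1,k}=0$ for $p\leq k$ turns out to be redundant, since it propagates along diagonals back to the first row.) Thus a solution is constant along each diagonal $p-q=\mathrm{const}$, and every diagonal meeting the first row (i.e.\ $p-q\leq -1$) is forced to zero.

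Consequently the only potentially free diagonals are those with $p-q=s$ for $s\in\{0,1,\dots,k-1\}$. Such a diagonal contributes a free parameter precisely when every $(p,q)$ on it is a position where $a_{pq}$ is allowed to be nonzero, i.e.\ (by Lemma \ref{lem: generic first term}) every entry $\Alm_{k+1-p,k+1-q}$ on the corresponding half-turn rotation is shaded. Tracing indices, the diagonal $p-q=s$ in the coefficient matrix rotates to the diagonal $j-i=s$ in $\Alm$, which is exactly the upper diagonal $D_{k-s}^+$. Hence the number of free parameters equals
\[
    \#\{s\in\{0,\dots,k-1\}:D_{k-s}^+\text{ is entirely shaded}\}.
\]

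To finish, invoke the monotonicity from Lemma \ref{lem: regions above}: if $D_{p+1}^+$ is entirely shaded, then so is $D_p^+$ (shift each point one column to the right). Therefore the set of indices $p$ with $D_p^+$ entirely shaded is exactly $\{1,2,\dots,\alpha(\unl,\um)\}$, which has cardinality $\alpha(\unl,\um)$. Translating via $s=k-p$ gives $\cdim(\ker(N^T)_0)=\alpha(\unl,\um)$, as required. The main obstacle I anticipate is keeping track of the index transformations between the $(p,q)$-indexing of the coefficient matrix, the half-turn rotation relating it to $\Alm$, and the parametrisation of upper diagonals $D_p^+$; once those are aligned, the counting is essentially bookkeeping.
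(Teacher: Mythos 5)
Your proof follows essentially the same approach as the paper's: compute $N^T\underline{a}$, extract the constraints forcing constancy along diagonals and vanishing above the main diagonal, and identify the surviving diagonals with the entirely shaded upper diagonals of $\Alm$ via the half-turn rotation. One small slip: in the first paragraph you say you solve $N^T\underline{a}=0$ "in $\mathbf{J}(1)_0$," but $\underline{a}$ lives in $\mathbf{J}_0$ (the equation itself holds in $\mathbf{J}(1)_0$); the parametrisation you actually use, from Lemma~\ref{lem: generic first term}, is the correct one so this does not affect the argument.
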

A calculation for $\cdim(\ker(N^T)_0)$ when $k=3$ is given in Example \ref{ex:dim ker N}.
\begin{proof}
By Lemma \ref{lem: generic first term}, we know the form of a generic element $\underline{a}$ of $\mathbf{J}$ and applying $N^T$ gives a vector $N^T(\underline{a})$ with first term
\begin{align*}
 \sum_{q=1}^{k-1} a_{1q+1} x^{k-q}y^{-j_k -q}
\end{align*}
and for $2 \leq p \leq k$, its $p$-th term is 
\begin{align*}
\left( \sum_{q=1}^{k-1} (a_{pq+1}-a_{p-1q}) x^{k-q}y^{-j_k +i_{p-1}+p-q} \right) - a_{p-1k}y^{-j_k +i_{p-1}+p-q}.
\end{align*}
In particular, $\underline{a} \in \ker(N^T)$ if and only if the coefficient of each monomial in each of these expressions is zero i.e.\
\begin{enumerate}
\item $a_{1q}=0$ for all $q=2, \dots, k$;
\item $a_{pk}=0$ for all $p=1, \dots, k-1$;
\item $a_{pq+1}=a_{p-1q}$ for all $p=2, \dots, k$, $q=1, \dots, k-1$ or equivalently, $a_{p+1q+1}=a_{pq}$ for all $1 \leq p,q \leq k-1$.
\end{enumerate}
Note that $(3)$ holds if and only if, in the matrix of coefficients
\begin{equation}\label{eq: coefficient matrix 1}
\begin{array}{c}
\begin{tikzpicture}
\node at (0,0) {$\begin{pmatrix}
a_{11} & a_{12} & \dots & a_{1k-1} & a_{1k}\\
a_{21} & a_{22} & \dots & a_{2k-1} & a_{2k}\\
\vdots & \vdots & \ddots  & \vdots & \vdots \\
a_{k-11} & a_{k-12} & \dots & a_{k-1k-1} & a_{k-1k}\\
a_{k1} & a_{k2} & \dots & a_{kk-1} & a_{kk}
\end{pmatrix}$}; 
\draw[gray] (-2.3,1) -- (2.3,-1);
\draw[gray] (-1.1,1) -- (2.3,-0.5);
\draw[gray] (1,1) -- (2.3,0.5);
\draw[gray] (0,1) -- (2.3,0);
\draw[gray] (2,1.1) -- (2.4,0.85);
\draw[gray] (-2.3,0.5) -- (1,-1);
\draw[gray] (-2.3,0) -- (-0.2,-1);
\draw[gray] (-2.3,-0.5) -- (-1.1,-1);
\draw[gray] (-2.4,-0.9) -- (-2.2,-1.1);
\end{tikzpicture}
\end{array}
\end{equation}
the value along each of the diagonals is constant. If we further impose conditions $(1)$ and $(2)$, this shows that each of the diagonals above the main diagonal must be zero. Thus, $\underline{a} \in \ker(N^T)$ if and only if the $a_{ij}$ above the main diagonal are zero, and the $a_{ij}$ on each lower diagonal are constant. Hence, we see that for an element of $\ker(N^T)_0$, there are at most $k$-free choices - one for each of the lower diagonals. However, for a given diagonal $D_p^-$, (where here we are abusing notation by using $D_p^\pm$ to denote diagonals in matrices, as well as in $(k \times k)$ grids) we may only choose something nonzero if \underline{all} the $a_{ij}$ along $D_p^-$ may be nonzero. Thus $\cdim(\ker(N^T)_0)$ is precisely the number of lower diagonals in the matrix \eqref{eq: coefficient matrix 1} along which all $a_{ij}$ may be nonzero.

Recall from Lemma \ref{lem: generic first term} that $a_{ij}$ can be nonzero if and only if $\ell_{k+1-i} \leq m_{k+1-j}$ if and only if the entry $\Alm_{k+1-i, k+1-j}$ is shaded. Thus a lower diagonal  in the matrix \eqref{eq: coefficient matrix 1}, say $D_p^-$, can be all nonzero if and only if the upper diagonal $D_p^+$ in $\Alm$ is completely shaded. Hence, the number of lower diagonals in \eqref{eq: coefficient matrix 1} along which all entries may be nonzero is the same as the number of upper diagonals in $\Alm$ which are completely above the staircase path which, by definition, is $\alpha(\unl,\um)$.
\end{proof}
\begin{lemma} \label{cor: image dimension}
With the setup above, $\cdim(\mathrm{im}(M^T)_0)= k -\beta(\unl,\um)$.
\end{lemma}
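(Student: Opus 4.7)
The plan is to mirror the argument of Lemma \ref{lem: kernel dimension}, but applied to the image of $M^T$. First, I would expand $M^T\underline{b}$ monomially for a generic $\underline{b}\in\mathbf{J}(1)_0$ with coefficients $b_{qr}$ as in Lemma \ref{lem: generic second term}. The (lower-triangular) form of $M^T$ and the relation $x^k=0$ yield
\[
(M^T\underline{b})_p \;=\; \sum_{q=1}^{p}\sum_{r\geq k+q-p} b_{qr}\,x^{2k-1-(p-q)-r}\, y^{\,i_{p-1}+q-r-j_k-1}.
\]
Substituting $d=k-(r-q)$ so that the $x$-exponent becomes $k-1-p+d$ and the $y$-exponent becomes $i_{p-1}+d-k-j_k-1$ (each depending only on $p$ and $d$), the coefficient at this monomial collapses to the diagonal sum
\[
C_d \;:=\; \sum_{(q,r)\in D^+_d} b_{qr},
\]
independent of $p$. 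This collapse is the key structural observation.

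Next, for each $d\in\{1,\dots,k\}$, I would single out the vector $v_d\in \mathbf{J}(k)$ obtained by imposing $C_d=1$ and $C_{d'}=0$ for $d'\neq d$: explicitly, $v_d$ has $p$-th component equal to $x^{k-1-p+d}y^{i_{p-1}+d-k-j_k-1}$ for $p\geq d$ and zero for $p<d$. The image of $M^T$ at degree zero is then spanned by those $v_d$ for which at least one admissible $b_{qr}$ with $(q,r)\in D^+_d$ exists: picking any such $b_{qr}=1$ (all others zero) produces a valid $\underline{b}\in\mathbf{J}(1)_0$ with $M^T\underline{b}=v_d$, confirming simultaneously that $v_d\in\mathbf{J}(k)_0$ and that it lies in the image. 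By Lemma \ref{lem: generic second term}, $b_{qr}$ is admissible exactly when $\ell_{k+1-q}<m_{k+1-r}$, i.e.\ when $\Blm_{k+1-q,k+1-r}$ is shaded. Since the half-turn rotation $(q,r)\mapsto(k+1-q,k+1-r)$ sends $D^+_d$ bijectively onto $D^-_d$, such an admissible $b_{qr}$ exists exactly when $D^-_d$ contains a shaded vertex of $\Blm$; by the definition of $\beta(\unl,\um)$, this happens for precisely $k-\beta(\unl,\um)$ values of $d$.

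The final step is to verify that these $k-\beta$ vectors $v_d$ are linearly independent, which is immediate from their staircase supports: $v_d$ first meets the $d$-th slot as the single monomial $x^{k-1}y^{i_{d-1}+d-k-j_k-1}$, so in any vanishing linear combination $\sum c_d v_d$ the smallest index $d_0$ with $c_{d_0}\neq 0$ would force a nonzero single-monomial contribution at the $d_0$-th component, a contradiction. Combining the three steps yields $\cdim(\mathrm{im}(M^T)_0)=k-\beta(\unl,\um)$.

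The main obstacle I expect is the bookkeeping in the first step: one has to verify carefully that, after the change of variables $d=k-(r-q)$, the coefficient at each monomial slot is genuinely the full diagonal sum $C_d$, and that no two distinct diagonals $d\neq d'$ ever land in the same monomial slot of the same component. Once this collapse is secured, the rotation identifying $D^+_d$ with $D^-_d$ and the linear-independence argument are routine analogues of the techniques already deployed for Lemma \ref{lem: kernel dimension}.
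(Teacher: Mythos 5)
Your proposal is correct and follows essentially the same route as the paper's own proof: expand $M^T\underline{b}$ using $x^k=0$, reindex along the diagonals so the coefficient collapses to the diagonal sum (what the paper calls $\upgamma_s$ and you call $C_d$), express $M^T(\underline{b})$ as $\sum_s \upgamma_s v_s$, count the $v_s$ whose coefficient can be made nonzero, and translate via the half-turn rotation to lower diagonals of $B(\unl,\um)$ to obtain $k-\beta(\unl,\um)$. Your treatment is marginally more explicit than the paper's on two minor points (producing explicit witnesses by setting a single admissible $b_{qr}=1$, and verifying linear independence of the $v_d$ via the leading monomial at the $d$-th component), but there is no substantive difference in strategy.
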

Another computation for $\cdim(\mathrm{im}(M^T)_0)$ in the $k=3$ case will be shown in Example \ref{ex: dim im M}.
\begin{proof}
Recall that $(M^T)_{p,q}=0$ if $p<q$ and $(M^T)_{p,q}=x^{k-1-p+q}y^{i_{p-1}-i_{q-1}}$ if $p \geq q$. Applying $M^T$ to a generic element $\underline{b}$ of $\mathbf{J}(1)_0$ (cf.~Lemma \ref{lem: generic second term}) is therefore 
\begin{align*}
(M^T(\underline{b}))_p &= \sum_{q=1}^k (M^T)_{p,q} (\underline{b})_q\\
		&= \sum_{q=1}^k (M^T)_{p,q} \left( \sum_{r=1}^k b_{qr}x^{k-r}y^{i_{q-1}-j_k-1+q-r} \right)\\
		&= \sum_{q=1}^p x^{k-1-p+q}y^{i_{p-1}-i_{q-1}}  \left( \sum_{r=1}^k b_{qr}x^{k-r}y^{i_{q-1}-j_k-1+q-r} \right)\\
		&=\sum_{q=1}^p \sum_{r=1}^k b_{qr} x^{2k-1-p+q-r}y^{i_{p-1}-j_k-1+q-r}. 
\end{align*} 
Recall that $x^k=0$ and so for a term $x^{2k-1-p+q-r}$ to be nonzero, it must be that 
\begin{align*}
2k-1-p+q-r \leq k-1 \iff r \geq k-p+q.
\end{align*}
Thus we may write 
\begin{align}
(M^T(\underline{b}))_p =\sum_{q=1}^p \sum_{r=k-p+q}^k b_{qr} x^{2k-1-p+q-r}y^{i_{p-1}-j_k-1+q-r}. \label{big sum}
\end{align}
Now set $s=k+q-r$. Then, since $r \leq k$ we have $s =k+q-r \geq k+q-k =q$.
Moreover, since $r \geq k-p+q$ we have $s =k+q-r \leq k+q-(k-p+q) =p$. Thus we may reindex \eqref{big sum} to get
\begin{align*}
(M^T(\underline{b}))_p &=\sum_{q=1}^p \sum_{s=q}^p b_{qk+q-s} x^{k-1-p+s}y^{i_{p-1}-j_k-1-k+s}\\
&=\sum_{s=1}^p \sum_{q=1}^s b_{qk+q-s} x^{k-1-p+s}y^{i_{p-1}-j_k-1-k+s}\\
&= \sum_{s=1}^px^{k-1-p+s}y^{i_{p-1}-j_k-1-k+s} \left(\sum_{q=1}^s  b_{qk+q-s}  \right).
\end{align*}

Notice that for each $1 \leq s \leq k$ the complex number $\upgamma_s \colonequals \sum_{q=1}^s b_{q,k+q-s}$ appears as a coefficient in the terms $(M^T(\underline{b}))_p$ for $p=s, \dots, k$, and the $\gamma_s$ are mutually independent, as none of the $b_{ij}$ appear as a summand in more than one $\gamma_s$. In particular, we may write $M^T(\underline{b})$ as
\begin{align*}
\upgamma_1
\begin{pmatrix}
x^{k-1}y^{-j_k-k}\\
x^{k-2}y^{i_1-j_k-k}\\
\vdots\\
xy^{i_{k-2}-j_k-k}\\
y^{i_{k-1}-j_k-k}
\end{pmatrix}
+\upgamma_2
\begin{pmatrix}
0\\
x^{k-1}y^{i_1-j_k-(k-1)}\\
\vdots\\
x^2y^{i_{k-2}-j_k-(k-1)}\\
xy^{i_{k-1}-j_k-(k-1)}
\end{pmatrix}+ \dots 
+\upgamma_k
\begin{pmatrix}
0\\
0\\
\vdots\\
0\\
x^{k-1}y^{i_{k-1}-j_k-1}
\end{pmatrix}.
\end{align*}
and thus the dimension of $\mathrm{im}(M^T)_0$ is the number of these vectors whose corresponding coefficient $\upgamma_s$ may be nonzero. But $\upgamma_s$ may be nonzero if and only if at least one of the $b_{qk+q-s}$ for $q=1, \dots, s$ may be nonzero and so the dimension of $\mathrm{im}(M^T)_0$ is the number of upper diagonals in the coefficient matrix
\begin{equation}
\begin{array}{c} \label{eq: coefficient matrix 2}
\begin{tikzpicture}
\node at (0,0) {$\begin{pmatrix}
b_{11} & b_{12} & \dots & b_{1k-1} & b_{1k}\\
b_{21} & b_{22} & \dots & b_{2k-1} & b_{2k}\\
\vdots & \vdots & \ddots  & \vdots & \vdots \\
b_{k-11} & b_{k-12} & \dots & b_{k-1k-1} & b_{k-1k}\\
b_{k1} & b_{k2} & \dots & b_{kk-1} & b_{kk}
\end{pmatrix}$}; 
\draw[gray] (-2.3,1) -- (2.3,-1);
\draw[gray] (-1.1,1) -- (2.3,-0.5);
\draw[gray] (1,1) -- (2.3,0.5);
\draw[gray] (0,1) -- (2.3,0);
\draw[gray] (2,1.1) -- (2.4,0.85);
\end{tikzpicture}
\end{array}
\end{equation}
where at least one coefficient along that diagonal can be nonzero. Equivalently, the dimension $\cdim(\mathrm{im}(M^T)_0)$ is precisely $k$ minus the number of upper diagonals where all the coefficients must be zero.\\
Recall that $b_{ij}$ can be nonzero if and only if $\ell_{k+1-i} < m_{k+1-j}$ if and only if $\Blm_{k+1-i, k+1-j}$ is shaded. Thus the upper diagonal  $D_p^+$ in the matrix \eqref{eq: coefficient matrix 2} 
has to be all zero if and only if the lower diagonal $D_p^-$ in $\Blm_{i,j}$ 
is completely unshaded. Hence, the number of upper diagonals in \eqref{eq: coefficient matrix 2} along which all entries have to be zero is the same as the number of lower diagonals in $\Blm$ which are completely below the staircase path which, by definition, is $\beta(\unl,\um)$.
\end{proof}

We are now ready to prove Theorem \ref{T: ext dimension}.
\begin{proof}[Proof of Theorem \ref{T: ext dimension}]
As explained in Section \ref{S: Strategy}, $\cdim(\Ext^1(I(\unl), I(\um)))$ is calculated as
\begin{align*}
\cdim(\ker(M^T)_0) - \cdim(\mathrm{im}(N^T)_0)
\end{align*}
which by rank-nullity is equal to
\begin{align*}
(\cdim(\mathbf{J}(1)_0)- \cdim(\mathrm(M^T)_0)) - (\cdim(\mathbf{J}_0)-\cdim(\ker(N^T)_0)). 
\end{align*}
Lemma \ref{cor: dimension difference} shows
\begin{align*}
\cdim(\mathbf{J}(1)_0)-\cdim(\mathbf{J}_0)= - |\unl \cap \um |
\end{align*}
and Lemmas \ref{lem: kernel dimension} and \ref{cor: image dimension} respectively show that
\begin{align*}
\cdim(\ker(N^T)_0)= \alpha(\unl,\um) \quad \text{and} \quad \cdim(\mathrm{im}(M^T)_0)= k -\beta(\unl,\um)
\end{align*}
so combining all of these gives
\begin{align*}
\cdim(\Ext^1(I(\unl), I(\um)))= \beta(\unl,\um) + \alpha(\unl,\um) - k - | \unl \cap \um |. 
\end{align*}
\end{proof}
This gives all we need to prove our main result in the special case when $\unl$ and $\um$ are disjoint.
\begin{corollary} \label{cor: disjoint noncrossing}
Given two disjoint $k$-subsets $\unl$ and $\um$, $\cdim(\Ext^1(I(\unl), I(\um)))=0$ if and only if $\unl$ and $\um$ are noncrossing.
\end{corollary}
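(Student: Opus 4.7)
The plan is to combine the two main ingredients already established in this section: the dimension formula in Theorem \ref{T: ext dimension} and the combinatorial characterisation of noncrossing in the disjoint case from Lemma \ref{lem: single step}. Since the hypothesis is that $\unl$ and $\um$ are disjoint, the intersection term in the formula vanishes, and the statement reduces to a comparison of two purely combinatorial quantities attached to the pair of staircase paths.

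More concretely, I would begin by invoking Theorem \ref{T: ext dimension} to write
\begin{align*}
\cdim(\Ext^1(I(\unl), I(\um))) = \alpha(\unl,\um) + \beta(\unl,\um) - k - |\unl \cap \um|.
\end{align*}
Since $\unl \cap \um = \emptyset$, the last term drops out and the identity simplifies to $\alpha(\unl,\um) + \beta(\unl,\um) - k$. Note also that by Lemma \ref{L: equaldisjoint} we have $\Alm = \Blm$ in this case, so $\alpha$ and $\beta$ are both read off from the same staircase path.

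The second step is to invoke Lemma \ref{lem: single step}, which tells us that, for disjoint $k$-subsets, $\unl$ and $\um$ are noncrossing precisely when the (common) staircase path is one of the four single-step shapes, and furthermore that this happens if and only if $\alpha(\unl,\um) + \beta(\unl,\um) = k$. Combining this with the simplified Ext formula, the vanishing of $\cdim(\Ext^1(I(\unl), I(\um)))$ is equivalent to $\alpha(\unl,\um) + \beta(\unl,\um) = k$, which in turn is equivalent to $\unl$ and $\um$ being noncrossing.

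Since both of the results being combined have already been proved in the preceding subsections, no real obstacle is expected here; the proof is essentially a one-line citation of Theorem \ref{T: ext dimension} and Lemma \ref{lem: single step}, together with the observation that disjointness makes $|\unl \cap \um|$ vanish. The substantive work\textemdash{}extending this equivalence to arbitrary (not necessarily disjoint) pairs of $k$-subsets, as required for the full Theorem \ref{T:compatible}\textemdash{}will be carried out separately in Section \ref{S:reduction} via a reduction to the disjoint case.
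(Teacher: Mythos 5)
Your proposal is correct and follows exactly the same route as the paper: apply Theorem \ref{T: ext dimension}, use disjointness to drop the $|\unl\cap\um|$ term, and conclude via the second part of Lemma \ref{lem: single step}. No differences worth noting.
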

\begin{proof}
Since $\unl$ and $\um$ are disjoint, $|\unl \cap \um |=0$ and so by Theorem \ref{T: ext dimension}
\begin{align*}
\cdim(\Ext^1(I(\unl), I(\um)))= \beta(\unl,\um) + \alpha(\unl,\um) - k. 
\end{align*}
Thus, $\Ext^1(I(\unl), I(\um))=0$ if and only if $\beta(\unl,\um) + \alpha(\unl,\um) = k$ which holds if and only if $\unl$ and $\um$ are noncrossing by Lemma \ref{lem: single step}. 
\end{proof}

\subsection{The \texorpdfstring{$k=3$}{k=3} Case} \label{S:example}

In this subsection, we will illustrate the calculations in Lemma \ref{lem: kernel dimension}, and Lemma \ref{cor: image dimension} in the $k=3$ case.

The following example will demonstrate Lemma \ref{lem: kernel dimension}, showing that $\cdim(\ker(N^T)_0)= \alpha(\unl,\um)$.

\begin{example}[$k=3$ example] \label{ex:dim ker N}
Applying $N^T$ to a generic element $\underline{a}$ of $\mathbf{J}=J(2) \oplus J(1-i_1) \oplus J(-i_2)$ gives
\begin{align*}
N^T(\underline{a})= 
\begin{pmatrix}
a_{12}x^2y^{-j_3-1} + a_{13}xy^{-j_{3}-2}\\
(a_{22}-a_{11})x^2y^{i_1-j_3} + (a_{23}-a_{12})xy^{i_1-j_{3}-1}- a_{13}y^{i_1-j_{3}-2}\\
(a_{32}-a_{21})x^2y^{i_2-j_3+1} + (a_{33}-a_{22})xy^{i_2-j_{3}}- a_{23}y^{i_2-j_{3}-1}
\end{pmatrix}.
\end{align*}
In particular, $\underline{a}$ lies in $\ker(N^T)_0$ if and only if 
\begin{align*}
a_{11}=a_{22}=a_{33}, \ a_{32}=a_{21}, \ \text{and} \ a_{12}=a_{13}=a_{23}=0.
\end{align*}
Equivalently, in the matrix of coefficients
\begin{center}
\begin{tikzpicture}
\node at (0,0) {$\begin{pmatrix}
a_{11} & a_{12} & a_{13}\\
a_{21} & a_{22} & a_{23}\\
a_{31} & a_{32} & a_{33}
\end{pmatrix}$}; 
\draw[gray] (-1,0.5) -- (1,-0.5);
\draw[gray] (-1,0) -- (0,-0.5);
\draw[gray] (-1.1,-0.45) -- (-0.9,-0.55);
\end{tikzpicture}
\end{center}
all the entries above the main diagonal must be zero, and those connected by a line must all be equal. Thus, the dimension of  $\ker(N^T)_0$ is at most three, with possible basis vectors corresponding to each of these lines:
\begin{align*}
\begin{pmatrix} 
x^2y^{-j_3}\\
xy^{i_1-j_3}\\
y^{i_2-j_3}
\end{pmatrix} \in \ker(N^T)_0 &\iff a_{11}, a_{22}, a_{33} \ \text{can all be nonzero}\\
&\iff  \ell_1 \leq m_1, \ell_2 \leq  m_2, \ell_3 \leq m_3
\end{align*}
which by definition is if and only if, in $\Alm$, all those vertices in diagonal $D_3^+$, circled below,
\begin{center}
\begin{tikzpicture}[scale=0.8]
    \foreach \x in {1,2,3}
    \foreach \y in {1,2,3}
    {
    \node[vertex] at (\x,\y) {};
    }
\draw[rotate=45] (2.825,0) ellipse (0.3cm and 1.8cm);
  \end{tikzpicture}
\end{center}
are shaded, or equivalently, this diagonal lies completely above the corresponding staircase path. Similarly, 
\begin{align*}
\begin{pmatrix} 
0\\
x^2y^{i_1-j_3+1}\\
xy^{i_2-j_3+1}
\end{pmatrix} \in \ker(N^T)_0 &\iff a_{21}, a_{32} \ \text{can both be nonzero}\\
&\iff  \ell_2 \leq m_3, \ell_1 \leq  m_2
\end{align*}
which by definition is if and only if, in $\Alm$, all those vertices in diagonal $D_2^+$, circled below, 
\begin{center}
\begin{tikzpicture}[scale=0.8]
    \foreach \x in {1,2,3}
    \foreach \y in {1,2,3}
    {
    \node[vertex] at (\x,\y) {};
    }
\draw[rotate=45] (3.525,0) ellipse (0.2cm and 1.1cm);
  \end{tikzpicture}
\end{center}
are shaded, or equivalently, this diagonal lies completely above the corresponding staircase path.
And finally, 
\begin{align*}
\begin{pmatrix} 
0\\
0\\
x^2y^{i_2-j_3+2}
\end{pmatrix} \in \ker(N^T)_0 &\iff a_{31} \ \text{can be nonzero}\\
&\iff  \ell_1 \leq m_3
\end{align*}
which by definition is if and only if, in $\Alm$, all those vertices in the circled diagonal $D_1^+$
\begin{center}
\begin{tikzpicture}[scale=0.8]
    \foreach \x in {1,2,3}
    \foreach \y in {1,2,3}
    {
    \node[vertex] at (\x,\y) {};
    }
\draw[rotate=45] (4.25,0) ellipse (0.2cm and 0.5cm);
  \end{tikzpicture}
\end{center}
are shaded, or equivalently, this diagonal lies completely above the corresponding staircase path. \\
In other words, the dimension of $\ker(N^T)_0$ is precisely the number of upper diagonals
\begin{center}
\begin{tikzpicture}[scale=0.8]
    \foreach \x in {1,2,3}
    \foreach \y in {1,2,3}
    {
    \node[vertex] at (\x,\y) {};
    }
\draw[rotate=45] (3.525,0) ellipse (0.25cm and 1.1cm);
\draw[rotate=45] (4.25,0) ellipse (0.2cm and 0.5cm);
\draw[rotate=45] (2.825,0) ellipse (0.3cm and 1.8cm);
\node at (4.2,3) {$D_1^+$};
\node at (4.2,2) {$D_2^+$};
\node at (4.2,1) {$D_3^+$};
  \end{tikzpicture}
\end{center}
which lie completely above the staircase path in $\Alm$, which by definition is $\alpha(\unl,\um)$.
\end{example}

This example will show the statement $\cdim(\mathrm{im}(M^T)_0)= k -\beta(\unl,\um)$ in Lemma \ref{cor: image dimension} in the $k=3$ case.

\begin{example}[$k=3$ example] \label{ex: dim im M}
Applying $M^T$ to a generic element $\underline{b}$ of $\mathbf{J}(1) = J(3) \oplus J(2-i_1) \oplus J(1-i_2)$ gives
\begin{align*}
M^T(\underline{b})&= 
\begin{pmatrix}
b_{13}x^2y^{-j_{3}-3}\\
(b_{12}+b_{23})x^2y^{i_1-j_3-2} + b_{13}xy^{i_1-j_{3}-3}\\
(b_{11}+b_{22}+b_{33})x^2y^{i_2-j_3-1} + (b_{12}+b_{23})xy^{i_2-j_{3}-2}- b_{13}y^{i_2-j_{3}-3}
\end{pmatrix}\\
&= b_{13}\begin{pmatrix}
x^2y^{-j_{3}-3}\\
xy^{i_1-j_{3}-3}\\
y^{i_2-j_{3}-3}
\end{pmatrix}
+ (b_{12}+b_{23}) \begin{pmatrix}
0\\
x^2y^{i_1-j_3-2}\\
xy^{i_2-j_{3}-2}
\end{pmatrix}
+ (b_{11}+b_{22}+b_{33})
\begin{pmatrix}
0\\
0\\
x^2y^{i_2-j_3-1}
\end{pmatrix}.
\end{align*}
From this, we see that the dimension of $\mathrm{im}(M^T)_0$ is at most three, with possible basis vectors:
\begin{align*}
\begin{pmatrix} 
x^2y^{-j_3-3}\\
xy^{i_1-j_3-3}\\
y^{i_2-j_3-3}
\end{pmatrix} \in \mathrm{im}(M^T)_0 &\iff \text{$b_{13}$ can be nonzero}\\
&\iff  \ell_3 < m_1
\end{align*}
which by definition is if and only if, in $\Blm$, at least one of the vertices in the diagonal $D_1^-$, circled below,  
\begin{center}
\begin{tikzpicture}[scale=0.8]
    \foreach \x in {1,2,3}
    \foreach \y in {1,2,3}
    {
    \node[vertex] at (\x,\y) {};
    }
\draw[rotate=45] (1.45,0) ellipse (0.2cm and 0.5cm);
  \end{tikzpicture}
\end{center}
lies above the corresponding staircase path. Similarly,
\begin{align*}
\begin{pmatrix} 
0\\
x^2y^{i_1-j_3-2}\\
xy^{i_2-j_3-2}
\end{pmatrix} \in \mathrm{im}(M^T)_0 &\iff \text{at least one of $b_{12}, b_{23}$ can be nonzero}\\
&\iff  \ell_3 < m_2 \ \text{or} \ \ell_2 < m_1
\end{align*}
which by definition is if and only if, in $\Blm$, at least one of the vertices in the diagonal $D_2^-$, circled below,  
\begin{center}
\begin{tikzpicture}[scale=0.8]
    \foreach \x in {1,2,3}
    \foreach \y in {1,2,3}
    {
    \node[vertex] at (\x,\y) {};
    }
\draw[rotate=45] (2.15,0) ellipse (0.25cm and 1.1cm);
  \end{tikzpicture}
\end{center}
lies above the corresponding staircase path. \\
And finally,
\begin{align*}
\begin{pmatrix} 
0\\
0\\
x^2y^{i_2-j_3-1}
\end{pmatrix} \in \mathrm{im}(M^T)_0 &\iff \text{at least one of $b_{11}, b_{22}, b_{33}$ can be nonzero}\\
&\iff  \ell_3 < m_3 \ \text{or} \ \ell_2 < m_2  \ \text{or} \ \ell_1 < m_1
\end{align*}
which by definition is if and only if, in $\Blm$, at least one of the vertices in the diagonal $D_3^-$, circled below,  
\begin{center}
\begin{tikzpicture}[scale=0.8]
    \foreach \x in {1,2,3}
    \foreach \y in {1,2,3}
    {
    \node[vertex] at (\x,\y) {};
    }
\draw[rotate=45] (2.825,0) ellipse (0.3cm and 1.8cm);
  \end{tikzpicture}
\end{center}
lies above the corresponding staircase path. In other words, the dimension of $\mathrm{im}(M^T)_0$ is the number of the circled diagonals in
\begin{center}
\begin{tikzpicture}[scale=0.8]
    \foreach \x in {1,2,3}
    \foreach \y in {1,2,3}
    {
    \node[vertex] at (\x,\y) {};
    }
\draw[rotate=45] (2.825,0) ellipse (0.3cm and 1.8cm);
\draw[rotate=45] (2.15,0) ellipse (0.25cm and 1.1cm);
\draw[rotate=45] (1.45,0) ellipse (0.2cm and 0.5cm);
\node at (-0.2,3) {$D_3^-$};
\node at (-0.2,2) {$D_2^-$};
\node at (-0.2,1) {$D_1^-$};
  \end{tikzpicture}
\end{center}
which lie partially above the staircase path in $\Blm$. Or equivalently, the dimension of $\mathrm{im}(M^T)_0$ is $3$ minus the number of lower diagonals which lie completely below the the staircase path in $\Blm$, which by definition is $\beta(\unl,\um)$. Hence $\cdim(\mathrm{im}(M^T)_0)=3-\beta(\unl,\um)$.
\end{example}

\begin{example}
Returning to Example \ref{Ex:bfJandstaircase}, we compute $\cdim(\Ext^1(I,J))$ for the  graded ideals $I=(x^2,xy,y^2)$ and $J=(x^2,x,y^2)(-1)$ of $R=\mathbb{C}[x,y]/(x^3)$. Recall that these ideals correspond to the $3$-subsets $\unl=(-2,0,2)$ and $\um=(-1,2,3)$, and we may compute that $\alpha(\unl,\um)=3$, $\beta(\unl,\um)=2$ and $|\unl \cap \um |=1$. Using Theorem \ref{T: ext dimension}, this shows that
\[
\cdim(\Ext^1(I,J))=3+2-3-1=1.
\]
We see here that $\cdim(\Ext^1(I,J)) \neq 0$ which coincides with the fact that there is a crossing
\[
\ell_1 < m_1 < \ell_2 < m_3.
\]
\end{example}

\subsection{Reduction to Disjoint Case} \label{S:reduction}
Return now to the general setting of $k \geq 2$. The dimension formula for $\Ext^1$ given in Theorem \ref{T: ext dimension} allowed us to directly prove Theorem \ref{T:compatible} in the case where $\unl$ and $\um$ are disjoint. In this final section, we complete the proof of Theorem \ref{T:compatible} by showing that when $\unl$ and $\um$ are not disjoint, we may reduce the problem to a setting where they are.

Suppose that $\unl$ and $\um$ are $k$-subsets such that $\unl \cap \um$ is nonempty. In particular, suppose that we have a pair $(i,j)$ such that $\ell_i=m_j$. Note that this corresponds to a difference between $\Alm$ and $\Blm$; $\Alm_{i,j}$ will be shaded but $\Blm_{i,j}$ will not be. We may form two new $(k-1)$-subsets $\tunl$ and $\tum$ by deleting $\ell_i=m_j$ from $\unl$ and $\um$ respectively:
\begin{align*}
\widetilde{\ell}_p = \begin{cases}
               \ell_p               & \text{if $1 < p< i$} \\
               \ell_{p+1}      & \text{if $i \leq p \leq k-1$}
           \end{cases} \quad \text{and} \quad
\widetilde{m}_q = \begin{cases}
               m_q               & \text{if $1 < q< j$} \\
               m_{q+1}      & \text{if $j \leq q \leq k-1$.}
           \end{cases}
\end{align*}
\begin{example} \label{ex:nondisjoint}
($k=5$) Taking $\unl$ and $\um$ with 
\begin{align*}
\ell_1< m_1 < \ell_2=m_2 < \ell_3 < m_3 < m_4< \ell_4 < m_5 < \ell_5
\end{align*}
and removing $\ell_2=m_2$ gives $\tunl$ and $\tum$ satisfying:
\begin{align*}
\widetilde{\ell}_1< \widetilde{m}_1 < \widetilde{\ell}_2 < \widetilde{m}_2 < \widetilde{m}_3< \widetilde{\ell}_3 < \widetilde{m}_4 < \widetilde{\ell}_4
\end{align*}
which have $\Alm$ and $\tAlm$ as follows:
\begin{center}
  \begin{tikzpicture}[scale=0.67]
    \foreach \x in {1,2,3,4,5}
    \foreach \y in {1,2,3,4,5}
    {
    \node[vertex] at (\x,\y) {};
    }
    \fill (1,6-1) node[fvertex] {};
    \fill (2,6-1) node[fvertex] {};
    \fill (3,6-1) node[fvertex] {};
    \fill (4,6-1) node[fvertex] {};
    \fill (5,6-1) node[fvertex] {};
    \fill (2,6-2) node[fvertex] {};
    \fill (3,6-2) node[fvertex] {};
    \fill (4,6-2) node[fvertex] {};
    \fill (5,6-2) node[fvertex] {};
    \fill (3,6-3) node[fvertex] {};
    \fill (4,6-3) node[fvertex] {};
    \fill (5,6-3) node[fvertex] {};
    \fill (5,6-4) node[fvertex] {};
 \draw (0.5,4.5) -- (5.5,4.5);
\draw (0.5,3.5)--(5.5,3.5);
\draw (2.5,0.5)--(2.5,5.5);
\draw (1.5,0.5)--(1.5,5.5);
 \draw[draw=green, fill=yellow, opacity=0.2] (0.5,5.5) -- (1.5,5.5)--(1.5,4.5)--(0.5,4.5) -- cycle;
 \draw[draw=green, fill=yellow, opacity=0.2] (2.5,5.5) -- (5.5,5.5)--(5.5,4.5)--(2.5,4.5) -- cycle;
 \draw[draw=green, fill=yellow, opacity=0.2] (0.5,3.5) -- (1.5,3.5)--(1.5,0.5)--(0.5,0.5) -- cycle;
 \draw[draw=green, fill=yellow, opacity=0.2] (2.5,3.5) -- (5.5,3.5)--(5.5,0.5)--(2.5,0.5) -- cycle;
  \end{tikzpicture}
\hspace{1cm}
  \begin{tikzpicture}[scale=0.67]
    \foreach \x in {1,2,3,4}
    \foreach \y in {1,2,3,4}
    {
    \node[vertex] at (\x,\y) {};
    }
    \fill (1,5-1) node[fvertex] {};
    \fill (2,5-1) node[fvertex] {};
    \fill (3,5-1) node[fvertex] {};
    \fill (4,5-1) node[fvertex] {};
    \fill (2,5-2) node[fvertex] {};
    \fill (3,5-2) node[fvertex] {};
    \fill (4,5-2) node[fvertex] {};
    \fill (4,5-3) node[fvertex] {};
\draw (0.5,3.5)--(4.5,3.5);
\draw (1.5,0.5)--(1.5,4.5);
 \draw[draw=yellow, fill=yellow, opacity=0.2] (0.5,4.5) -- (4.5,4.5)--(4.5,0.5)--(0.5,0.5) -- cycle;
  \end{tikzpicture}
\end{center}
\end{example}

\begin{lemma}
Given $k$-subsets $\unl$ and $\um$ as above, $\tAlm$ is obtained from $\Alm$ be deleting row $i$ and column $j$. Analogously, $\tBlm$ is obtained from $\Blm$ be deleting row $i$ and column $j$.
\end{lemma}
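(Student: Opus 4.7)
The plan is to verify the claim directly from the definitions, since the statement is essentially a reindexing exercise. There is no serious obstacle here; the only care needed is handling the four cases depending on whether the indices of the new grid fall before or after the deleted row/column.

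First I would recall the definition of $\widetilde{\ell}_p$ and $\widetilde{m}_q$ from the setup just before the lemma, and the definition of $\Alm$ and $\Blm$ from Definition \ref{D:A(l,m)}. Fix indices $1\le p,q \le k-1$. By definition $\tAlm_{p,q}$ is filled if and only if $\widetilde{\ell}_p \le \widetilde{m}_q$. I would then split into the four cases $(p<i, q<j)$, $(p<i, q\ge j)$, $(p\ge i, q<j)$, $(p\ge i, q\ge j)$, and in each case substitute the explicit formula for $\widetilde{\ell}_p$ and $\widetilde{m}_q$. For example, in the case $p\ge i$, $q\ge j$ the condition becomes $\ell_{p+1}\le m_{q+1}$, which is precisely the condition for $\Alm_{p+1,q+1}$ to be filled. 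Collecting the four cases, the entry $\tAlm_{p,q}$ coincides with the entry of $\Alm$ at position $(p,q)$ after row $i$ and column $j$ have been deleted (and the remaining rows/columns re-indexed).

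For the second statement about $\tBlm$, I would run the identical argument, replacing $\le$ by the strict inequality $<$ used in defining $\Blm$. The case analysis is verbatim the same, and the conclusion follows.

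Since the argument is a direct unwinding of the definitions, I would present it concisely, writing out only one or two of the four cases and noting that the remaining cases are identical. The main (minor) subtlety is just to notice that the condition $\ell_i = m_j$ is not actually needed for the combinatorial identity itself; it only guarantees that $\tunl$ and $\tum$ are well-defined $(k-1)$-subsets obtained from $\unl$ and $\um$ by simultaneously removing the common element.
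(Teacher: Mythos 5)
Your proposal is correct and follows essentially the same route as the paper: split into four cases according to the position of $(p,q)$ relative to the deleted row $i$ and column $j$, substitute the explicit re-indexing formulas for $\widetilde{\ell}_p$ and $\widetilde{m}_q$, and observe that the filled/empty condition matches the corresponding shifted entry of $\Alm$ (resp.\ $\Blm$, with $\le$ replaced by $<$). Your remark that the hypothesis $\ell_i = m_j$ is not used in the identity itself, only to ensure $\tunl$ and $\tum$ remain valid $(k-1)$-subsets, is an accurate and worthwhile observation.
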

\begin{proof}
By deleting row $i$ and column $j$ in $\Alm$ we split $\Alm$ into (up to) four regions:
\begin{itemize}
\item $\Alm_{p,q}$ where $1 \leq p < i$ and $1 \leq q < j$;
\item $\Alm_{p,q}$ where $1 \leq p < i$ and $j < q \leq k$;
\item $\Alm_{p,q}$ where $i < p \leq k $ and $1 \leq q < j$;
\item $\Alm_{p,q}$ where $i < p \leq k$ and $j < q \leq k$.
\end{itemize}
In the first case, we wish to identify $\Alm_{p,q}$ with $\tAlm_{p,q}$. In this region, we have $\ell_p=\widetilde{\ell}_p$ and $m_q=\widetilde{m}_q$ and hence 
\begin{align*}
\ell_p \leq m_q \iff \widetilde{\ell}_p \leq \widetilde{m}_q
\end{align*}
or in other words, $\Alm_{p,q}$ is filled if and only if $\tAlm_{p,q}$ is filled, as required.\\
In the second region, we wish to identify $\Alm_{p,q}$ with $\tAlm_{p,q-1}$. In this region, we have we have $\ell_p=\widetilde{\ell}_p$ but since $q-1 \geq j$, we also have $m_{q}=\widetilde{m}_{q-1}$. Thus,
\begin{align*}
\ell_p \leq m_q \iff \widetilde{\ell}_p \leq \widetilde{m}_{q-1}
\end{align*}
or in other words, $\Alm_{p,q}$ is filled if and only if $\tAlm_{p,q-1}$ is filled, as required.\\
In the third region, we wish to identify $\Alm_{p,q}$ with $\tAlm_{p-1,q}$. In this region, we have we have $m_q=\widetilde{m}_q$ but since $p-1 \geq i$, we also have $\ell_{p}=\widetilde{\ell}_{p-1}$. Thus,
\begin{align*}
\ell_p \leq m_q \iff \widetilde{\ell}_{p-1} \leq \widetilde{m}_{q}
\end{align*}
or in other words, $\Alm_{p,q}$ is filled if and only if $\tAlm_{p-1,q}$ is filled, as required.\\
In the final region, we wish to identify $\Alm_{p,q}$ with $\tAlm_{p-1,q-1}$. In this region, we have we have $\ell_p=\widetilde{\ell}_{p-1}$ and $m_{q}=\widetilde{m}_{q-1}$. Thus,
\begin{align*}
\ell_p \leq m_q \iff \widetilde{\ell}_{p-1} \leq \widetilde{m}_{q-1}
\end{align*}
or in other words, $\Alm_{p,q}$ is filled if and only if $\tAlm_{p-1,q-1}$ is filled, as required.\\
The proof for $\Blm$ is exactly the same with all the inequalities changed to strict inequalities.
\end{proof}

\begin{lemma} \label{lem: diagonals}
Suppose $\unl$ and $\um$ are $k$-subsets with $\ell_i=m_j$. Then $j-i \leq k-\alpha(\unl,\um)$ and $i-j \leq k-\beta(\unl,\um)$. 
\end{lemma}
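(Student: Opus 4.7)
My plan is to prove both inequalities via a single combinatorial observation: because $\ell_i = m_j$ and $\um$ is strictly increasing, the grid entry $\Alm_{i, j-1}$ is empty (when it lies in the grid), which forces $\alpha$ to be small. The first inequality will follow directly; the second will follow by applying the first to the swapped pair $(\um, \unl)$ using the symmetry $\beta(\unl, \um) = \alpha(\um, \unl)$ from Lemma \ref{L:symmetry}.

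For the first inequality $j - i \leq k - \alpha(\unl, \um)$, I first dispose of the trivial case $j \leq i$, where $j - i \leq 0 \leq k - \alpha$ since $\alpha \leq k$. When $j > i$ (so automatically $j \geq 2$), I consider the grid point $(i, j-1)$, which is a valid entry. Strict monotonicity of $\um$ gives $m_{j-1} < m_j = \ell_i$, so $\Alm_{i, j-1}$ is empty. A quick index calculation shows $(i, j-1) \in D^+_p$ with $p = k - (j-i) + 1 \in \{2, \ldots, k\}$. The key step is then to conclude $\alpha(\unl, \um) < p$; for this, I use Lemma \ref{lem: regions above} to verify the monotonicity property that if $D^+_p$ contains an empty entry, then so does every $D^+_{p'}$ with $p' \geq p$. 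By the maximality in the definition of $\alpha$, this forces $\alpha < p$, i.e., $j - i \leq k - \alpha$.

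The second inequality is then a formal consequence of Lemma \ref{L:symmetry}: applying the first inequality with the roles of $\unl$ and $\um$ swapped (and the pair of indices swapped to $(j, i)$, noting that $m_j = \ell_i$) yields $i - j \leq k - \alpha(\um, \unl) = k - \beta(\unl, \um)$.

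I do not anticipate any serious obstacle. The only point requiring mild care is the monotonicity of the ``$D_p^+$ contains an empty entry'' property in $p$, which is an immediate consequence of Lemma \ref{lem: regions above} (given an empty point $(a,b) \in D_p^+$, the point $(a+1, b) \in D_{p+1}^+$ is also empty whenever it lies in the grid, and iterating produces an empty entry on every $D^+_{p'}$ with $p' \geq p$).
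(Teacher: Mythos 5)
Your proof is correct and follows essentially the same route as the paper's: both arguments pivot on the grid entry $\Alm_{i,j-1}$, using that $m_{j-1}<m_j=\ell_i$ forces this entry to be empty, and then relate the diagonal $D^+_p$ on which $(i,j-1)$ sits to $\alpha(\unl,\um)$ via the definition of $\alpha$. The only cosmetic differences are that you argue directly while the paper argues by contradiction, you make the ``empty entries propagate to higher-index upper diagonals'' monotonicity (a consequence of Lemma~\ref{lem: regions above}) explicit rather than leaving it implicit in the staircase picture, and you obtain the second inequality by invoking Lemma~\ref{L:symmetry} rather than the paper's ``similar''; all of these are sound.
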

\begin{proof}
Suppose for a contradiction that $j-i > k-\alpha(\unl,\um)$. Since $\alpha(\unl,\um) \leq k$, this implies $j-i > 0$ and hence we have $j >1$. Then,
\begin{align*}
j-i > k-\alpha(\unl,\um)
&\iff (j-i) -1 > (k-\alpha(\unl,\um))-1\\
&\iff (j-1)-i \geq k-\alpha(\unl,\um).
\end{align*}
This shows $(i,j-1)$ lies on the diagonal $D_p^+$ for some $p \leq \alpha(\unl,\um)$ and so, by definition of $\alpha(\unl,\um)$, the final line implies $\Alm_{i,j-1}$ is shaded. This implies $\ell_i \leq m_{j-1}$ which further implies $m_j=\ell_i \leq m_{j-1}$ which is a contradiction. The proof for $i-j \leq k-\beta(\unl,\um)$ is similar.
\end{proof}

\begin{lemma} \label{lem: ca geq}
With the setup above $\alpha(\tunl,\tum) \geq \alpha(\unl,\um)-1$.
\end{lemma}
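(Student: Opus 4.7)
The plan is to reduce the claim to showing that one specific diagonal in $\tAlm$ is entirely filled. Write $\alpha = \alpha(\unl, \um)$. If $\alpha \leq 1$ the inequality $\alpha(\tunl, \tum) \geq \alpha - 1$ is trivial, so assume $\alpha \geq 2$. By definition of $\alpha$, the diagonals $D^+_1, \ldots, D^+_\alpha$ in $\Alm$ are all completely filled. The analog of Lemma \ref{lem: regions above} for $\tAlm$ (which is immediate from the same argument) tells us that if a single diagonal $\widetilde{D}^+_q$ is entirely filled then so are all $\widetilde{D}^+_{q'}$ with $q' \leq q$. Hence it suffices to prove that the diagonal $\widetilde{D}^+_{\alpha - 1}$, consisting of pairs $(a', b')$ with $1 \leq a', b' \leq k-1$ and $b' - a' = (k-1) - (\alpha - 1) = k - \alpha$, is entirely contained in the filled region of $\tAlm$.

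Using the previous lemma identifying $\tAlm$ with $\Alm$ after deleting row $i$ and column $j$, each such $(a', b')$ corresponds to a point of $\Alm$, and which point depends on whether $a' < i$ or $a' \geq i$, and whether $b' < j$ or $b' \geq j$. I will split into these four cases. In the case $a' < i$ and $b' < j$, we have $\widetilde{\ell}_{a'} = \ell_{a'}$ and $\widetilde{m}_{b'} = m_{b'}$, and $(a', b')$ itself lies on $D^+_\alpha$ in $\Alm$, which is filled. In the case $a' \geq i$ and $b' \geq j$, the relevant point is $(a'+1, b'+1) \in D^+_\alpha$, which is again filled. In the case $a' < i$ and $b' \geq j$, the relevant point is $(a', b'+1)$, which lies on $D^+_{\alpha - 1}$; since $\alpha \geq 2$ this is one of the diagonals guaranteed to be filled.

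The crucial observation\textemdash{}and what I expect to be the main obstacle if one tries to approach the lemma naively\textemdash{}is that the remaining case $a' \geq i$ and $b' < j$ cannot occur, and this is precisely where the bound $j - i \leq k - \alpha$ from Lemma \ref{lem: diagonals} is used. Indeed, from $b' - a' = k - \alpha$ together with $a' \geq i$ we obtain $b' \geq i + k - \alpha$, while $b' < j$ gives $j \geq i + k - \alpha + 1$, contradicting Lemma \ref{lem: diagonals}. Thus every point of $\widetilde{D}^+_{\alpha - 1}$ is filled in $\tAlm$, which yields $\alpha(\tunl, \tum) \geq \alpha - 1 = \alpha(\unl, \um) - 1$ as required. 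The entire argument is a clean geometric bookkeeping exercise once the correct reduction (to $\widetilde{D}^+_{\alpha - 1}$) and the correct case to exclude (via Lemma \ref{lem: diagonals}) have been identified.
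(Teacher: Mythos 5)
Your proof is correct and follows essentially the same route as the paper's: reduce to showing $\widetilde{D}^+_{\alpha-1}$ is filled in $\tAlm$, split into four regions according to the position relative to row $i$ and column $j$, handle three regions by mapping back into filled vertices of $\Alm$, and use Lemma \ref{lem: diagonals} to rule out the fourth. The only cosmetic differences are that you dispatch $\alpha\leq 1$ up front while the paper notes $\alpha\geq 1$ from $\ell_i=m_j$, and in the region $a'<i$, $b'\geq j$ you observe the image vertex lies on $D^+_{\alpha-1}$ while the paper writes the same fact as the chain $\ell_p\leq m_q<m_{q+1}$.
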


\begin{proof}
First note that since $\ell_i=m_j$, there is at least one vertex above the staircase path in $\Alm$ and hence $\alpha(\unl,\um) > 0$. Also, by the definition of $\alpha(\unl,\um)$, we know that for all $s \leq \alpha(\unl,\um)$ the diagonal $D_s^+$ is completely shaded, or equivalently,
\begin{align} \label{inequality in A}
\text{for all pairs $(p,q)$ with $1 \leq p,q \leq k$ and $q-p \geq k-\alpha(\unl,\um)$, we have $\ell_p \leq m_q$}.
\end{align}
We will prove that in $\tAlm$, the diagonal $D_{\alpha(\unl,\um)-1}^+$ lies completely above the staircase path from which the result follows. Take a pair $(p,q)$ on this diagonal i.e.\ with $1 \leq p,q \leq k-1$ and $q-p=k-\alpha(\unl,\um)=(k-1)-(\alpha(\unl,\um)-1)$. Using \eqref{inequality in A} we see that
\begin{align}
\ell_p \leq m_q \quad \text{and} \quad \ell_{p+1} \leq m_{q+1} \label{inequalities 1}
\end{align}
where the latter holds since $1 \leq p+1, q+1 \leq k$ and $(q+1)-(p+1)=q-p \geq k-\alpha(\unl,\um)$.\\
Now, the pair $(p,q)$ must lie in one of four regions:
\begin{enumerate}
\item $1 \leq p < i$ and $1 \leq q < j$;
\item $i \leq p \leq k-1$ and $1 \leq q < j$;
\item $1 \leq p < i$ and $j \leq q \leq k-1$;
\item $i \leq p \leq k-1$ and $j \leq q \leq k-1$.
\end{enumerate}
If $(p,q)$ lies in the first region, then $\ell_p=\widetilde{\ell}_{p}$ and $m_q=\widetilde{m}_{q}$. Then, using \eqref{inequalities 1},
\begin{align*}
\widetilde{\ell}_{p}=\ell_p \leq m_q = \widetilde{m}_{q}.
\end{align*}
If $(p,q)$ lies in the second region, then, using Lemma \ref{lem: diagonals}, 
\begin{align*}
q-p < j-i \leq k-\alpha(\unl,\um)
\end{align*}
and thus no such $(p,q)$ lies on the diagonal $D_{\alpha(\unl,\um)-1}$ in $\tAlm$.\\
If $(p,q)$ lies in the third region, then $\ell_p=\widetilde{\ell}_{p}$ and $m_{q+1}=\widetilde{m}_{q}$. Then, using \eqref{inequalities 1}, and that $m_q < m_{q+1}$ shows that
\begin{align*}
\widetilde{\ell}_{p}=\ell_p \leq m_q < m_{q+1} = \widetilde{m}_{q}.
\end{align*}
If $(p,q)$ lies in the fourth region, then $\ell_{p+1}=\widetilde{\ell}_{p}$ and $m_{q+1}=\widetilde{m}_{q}$. Then, using \eqref{inequalities 1},
\begin{align*}
\widetilde{\ell}_{p}=\ell_{p+1} \leq m_{q+1} = \widetilde{m}_{q}.
\end{align*}
Thus we have shown that in $\tAlm$, all $(p,q)$ on the diagonal $D_{\alpha(\unl,\um)-1}^+$ lie above the staircase path as required.

\end{proof}
\begin{lemma} \label{lem: ca leq} 
With the setup above $\alpha(\tunl,\tum) \leq \alpha(\unl,\um)-1$.
\end{lemma}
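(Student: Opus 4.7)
The goal is to establish the reverse of Lemma \ref{lem: ca geq}, which together will yield $\alpha(\tunl, \tum) = \alpha(\unl, \um) - 1$. Setting $a \colonequals \alpha(\tunl, \tum)$, my plan is to show that the diagonal $D_{a+1}^+$ in $\Alm$ is completely shaded, i.e.\ to fix any pair $(p,q)$ with $1 \leq p,q \leq k$ and $q-p = (k-1)-a$, and prove $\ell_p \leq m_q$.

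The approach is a case analysis based on the position of $(p,q)$ relative to the removed row $i$ and column $j$. If $p \leq i$ and $q \geq j$, the inequality is immediate from $\ell_p \leq \ell_i = m_j \leq m_q$. If $(p,q)$ sits strictly inside one of the other three quadrants (so it avoids row $i$ and column $j$), I use the identification of $\tAlm$ with the sub-grid of $\Alm$ obtained by deleting row $i$ and column $j$: the pair $(p,q)$ transports to a pair $(\tilde p, \tilde q) \in \tAlm$, and $\ell_p \leq m_q$ translates directly into $\tAlm_{\tilde p, \tilde q}$ being shaded. A short computation shows this image lies on $D_a^+$ in $\tAlm$ (in the quadrants $p<i, q<j$ via $(p,q)$ and $p>i, q>j$ via $(p-1,q-1)$) or on $D_{a-1}^+$ (in the quadrant $p>i, q<j$ via $(p-1,q)$), and each of these diagonals is shaded by hypothesis together with the monotonicity ``$D_a^+$ shaded $\Rightarrow D_{a'}^+$ shaded for $a' \leq a$'' that follows from Lemma \ref{lem: regions above}.

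The main obstacle will be the two boundary cases where $(p,q)$ lies on row $i$ or column $j$ but not both: geometrically these configurations satisfy $\ell_p > m_q$, so the target inequality is false, and my strategy is to rule them out by deriving a contradiction with the hypothesis $\alpha(\tunl, \tum) = a$. Explicitly, for $p=i$ with $q<j$, I would use $(\tilde p, \tilde q) = (i, q)$, which lies on $D_a^+$ in $\tAlm$ and is therefore shaded, forcing $\ell_{i+1} \leq m_q$; this contradicts $\ell_{i+1} > \ell_i = m_j > m_q$. Symmetrically, for $p>i$ with $q=j$, the pair $(\tilde p, \tilde q) = (p-1, j-1)$ again lies on $D_a^+$, giving $\ell_p \leq m_{j-1}$, which contradicts $\ell_p > \ell_i = m_j > m_{j-1}$.

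Finally I would dispatch the corner conditions: the case $a=0$ reduces to the single inequality $\ell_1 \leq m_k$, which is immediate from $\ell_1 \leq \ell_i = m_j \leq m_k$; the index issues ($i=k$ or $j=1$ in the contradiction cases, or $D_{a-1}^+$ being undefined in the reduction when $p>i, q<j$) are all ruled out by the constraint $q - p = (k-1) - a$ combined with the bounds on $p,q$, which would force the relevant configurations to require $a \geq k$ (impossible since $a \leq k-1$) or $a \geq 2$ respectively. Collecting the cases then shows $D_{a+1}^+$ is shaded in $\Alm$, proving $\alpha(\unl, \um) \geq a+1$ as required.
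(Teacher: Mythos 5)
Your proof is correct, and it takes a genuinely different route from the paper's. The paper argues in the forward direction: by maximality of $\alpha(\unl,\um)$ there is a vertex on $D^+_{\alpha(\unl,\um)+1}$ of $\Alm$ lying below the staircase, and they transport that vertex to a below-staircase vertex on $D^+_{\alpha(\unl,\um)}$ of $\tAlm$; to guarantee the vertex survives the deletion (i.e.\ lands in one of the surviving quadrants), they invoke Lemma~\ref{lem: diagonals} to constrain $(p,q)$ against $(i,j)$. You instead argue in the reverse direction: writing $a=\alpha(\tunl,\tum)$, you show every vertex on $D^+_{a+1}$ of $\Alm$ is shaded. For vertices in the quadrant $p\le i$, $q\ge j$ the shading is immediate from $\ell_p\le\ell_i=m_j\le m_q$; for vertices strictly in the three remaining quadrants you pull shading back through the row/column-deletion identification of the grids (on $D^+_a$ or $D^+_{a-1}$ of $\tAlm$, using the monotonicity of shaded diagonals from Lemma~\ref{lem: regions above}); and for the two boundary configurations $p=i,q<j$ and $p>i,q=j$ you show they cannot occur by exhibiting a non-shaded vertex on $D^+_a$ of $\tAlm$, contradicting $\alpha(\tunl,\tum)=a$. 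One nice feature of your version is that it never invokes Lemma~\ref{lem: diagonals}: the relevant index constraints (ruling out $i=k$, $j=1$, and the undefined diagonal $D^+_{a-1}$ when $a\le 1$) all fall out of the single constraint $q-p=(k-1)-a$ together with $1\le p,q\le k$ and $a\le k-1$. This makes your argument somewhat more self-contained, at the cost of a longer nine-way case split; the paper's version is shorter because it delegates the awkward geometry to a separate lemma.
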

\begin{proof}
Suppose that $\alpha(\unl,\um)=k$. Since $\tunl$ and $\tum$ are $(k-1)$-subsets, by definition, $\alpha(\tunl,\tum) \leq k-1$ and hence $\alpha(\tunl,\tum) \leq \alpha(\unl,\um)-1$ is trivial in this case.\\
Now suppose $\alpha(\unl,\um) < k$. Since $\alpha(\unl,\um)$ is maximal, there exists $(p,q) \in D_{\alpha(\unl,\um)+1}^+$ (i.e.\ $1 \leq p,q \leq k$ and $q-p=k-\alpha(\unl,\um)-1$) 
such that $\Alm_{p,q}$ lies below the staircase path or equivalently, such that $\ell_p > m_q$.\\
Since $\Alm_{i,j}$ lies above the staircase (as $\ell_i=m_j$), Lemma \ref{lem: regions above} shows that $\Alm_{s,t}$ lies above the path whenever we have both $s \leq i$ and $t \geq j$. Thus we must have $p >i$ or $q <j$. \\
Suppose $p>i$, or equivalently $p\geq i+1$. Then,
\begin{align*}
q&= p+k-\alpha(\unl,\um)-1\\
 &\geq (i+1) +(k-\alpha(\unl,\um)) -1 \tag{since $ p \geq i+1 $}\\
&\geq (i+1)+(j-i) -1 \tag{by Lemma \ref{lem: diagonals} }\\
&= j.
\end{align*}
So we have $p-1 \geq i$ and $ q \geq j$. Thus $\widetilde{\ell}_{p-1} = \ell_p$ and either
\begin{itemize}
\item $q=j$, and then $q-1<j$ and so $\widetilde{m}_{q-1} = m_{q-1} \leq m_q$;
\item $q >j$, and then $\widetilde{m}_{q-1} = m_{q}$.
\end{itemize}
Hence we have $\widetilde{\ell}_{p-1} = \ell_p$ and $\widetilde{m}_{q-1} \leq m_q$ and so,
\begin{align*}
\widetilde{\ell}_{p-1} = \ell_p > m_q \geq \widetilde{m}_{q-1}
\end{align*}
where the strict inequality holds as $\Alm_{p,q}$ lies below the staircase path. In particular, the pair $(p-1, q-1)$ satisfies
$(q-1)-(p-1)=q-p=k-\alpha(\unl,\um)-1$ and $\tAlm_{p-1,q-1}$ lies below the staircase. \\
If $q<j$, or equivalently $q \leq j-1$, then,
\begin{align*}
p &= q-k+\alpha(\unl,\um)+1\\
 &\leq (j-1) -(k-\alpha(\unl,\um)) +1\tag{since $ q \leq j-1$}\\
&\leq (j-1)-(j-i)+1 \tag{by Lemma \ref{lem: diagonals} }\\
&= i.
\end{align*}
So we have $q < j$ and $ p\leq i$. Thus $\widetilde{m}_{q} = m_q$ and either
\begin{itemize}
\item $p=i$, and then $\widetilde{\ell}_{p} = \ell_{p+1} > \ell_p$;
\item $p <i $, and then $\widetilde{\ell}_{p} = \ell_{p}$.
\end{itemize}
Hence we have $\widetilde{\ell}_{p} \geq \ell_p$ and $\widetilde{m}_{q} = m_q$ and so,
\begin{align*}
\widetilde{\ell}_{p} \geq \ell_p > m_q = \widetilde{m}_{q}
\end{align*}
where the strict inequality holds as $\Alm_{p,q}$ lies below the staircase path. In particular, the pair $(p, q)$ satisfy
$q-p=k-\alpha(\unl,\um)-1$ and $\tAlm_{p,q}$ lies below the staircase. Moreover, if $p=k$, then since we know $q< j \leq k$, we have $k-\alpha(\unl,\um)-1=q-p < 0$ contradicting $\alpha(\unl,\um) <k$. Thus we must have $1 \leq p,q \leq k-1$.\\

Thus we have shown that there exists $1 \leq p,q \leq k-1$ such that $q-p=(k-1)-\alpha(\unl,\um)$ and $\tAlm_{p,q}$ lies below the staircase. Equivalently, we have shown that in $\tAlm$, the diagonal $D_{\alpha(\unl,\um)}^+$ does not lie completely above the staircase path and thus $\alpha(\tunl,\tum) \leq \alpha(\unl,\um)-1$.

\end{proof}

\begin{corollary} \label{lem: ca drop}
With the setup above $\alpha(\tunl,\tum) = \alpha(\unl,\um)-1$.
\end{corollary}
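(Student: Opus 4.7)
The plan is straightforward: this corollary is precisely the combination of the two immediately preceding lemmas. Specifically, Lemma \ref{lem: ca geq} establishes the inequality $\alpha(\tunl,\tum) \geq \alpha(\unl,\um)-1$ by constructing, for each pair $(p,q)$ on the diagonal $D_{\alpha(\unl,\um)-1}^+$ in $\tAlm$, a verification that $\widetilde{\ell}_p \leq \widetilde{m}_q$, thereby showing that this diagonal lies completely above the staircase path of $\tAlm$. Lemma \ref{lem: ca leq} establishes the reverse inequality $\alpha(\tunl,\tum) \leq \alpha(\unl,\um)-1$ by exhibiting a witness pair in $\tAlm$ on the diagonal $D_{\alpha(\unl,\um)}^+$ that lies below the staircase path, obtained by transferring the witness for maximality of $\alpha(\unl,\um)$ from $\Alm$ through the deletion of row $i$ and column $j$.

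Therefore, the proof I would give is a single sentence: combine Lemmas \ref{lem: ca geq} and \ref{lem: ca leq}. There are no additional obstacles or computations required, since the two bounds are sharp and opposite, forcing equality. The only subtlety worth emphasizing is that both lemmas use the hypothesis $\ell_i = m_j$ (so that $\Alm_{i,j}$ sits above the staircase) together with the bound $j-i \leq k-\alpha(\unl,\um)$ from Lemma \ref{lem: diagonals}, but this has already been absorbed into the statements being cited.

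\begin{proof}
This follows immediately by combining Lemma \ref{lem: ca geq} and Lemma \ref{lem: ca leq}.
\end{proof}
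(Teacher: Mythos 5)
Your proof is correct and matches the paper's proof exactly: the paper also deduces the corollary by simply combining Lemmas \ref{lem: ca geq} and \ref{lem: ca leq}.
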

\begin{proof}
Combine Lemmas \ref{lem: ca geq} and \ref{lem: ca leq}.
\end{proof}

\begin{corollary} \label{lem: cb drop}
With the setup above $\beta(\tunl,\tum) = \beta(\unl,\um)-1$.
\end{corollary}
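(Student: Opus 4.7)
The plan is to deduce this corollary directly from Corollary \ref{lem: ca drop} by exploiting the symmetry relation between $\alpha$ and $\beta$ established in Lemma \ref{L:symmetry}, rather than redoing the whole combinatorial argument of Lemmas \ref{lem: ca geq} and \ref{lem: ca leq} for the $\Blm$ grid.

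Recall that Lemma \ref{L:symmetry} states $\beta(\unl,\um) = \alpha(\um,\unl)$ for any pair of $k$-subsets, and the analogous identity $\beta(\tunl,\tum) = \alpha(\tum,\tunl)$ holds for the $(k-1)$-subsets obtained from $\unl$ and $\um$ by removing the common element $\ell_i = m_j$. The key observation is that swapping the roles of $\unl$ and $\um$ does not change the setup of Corollary \ref{lem: ca drop}: the pair $(\um,\unl)$ also satisfies $m_j = \ell_i$, and the deletion procedure produces exactly the pair $(\tum,\tunl)$.

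Applying Corollary \ref{lem: ca drop} to the pair $(\um,\unl)$ (with the common element $m_j = \ell_i$ removed at position $j$ in $\um$ and position $i$ in $\unl$) therefore yields
\begin{align*}
\alpha(\tum,\tunl) = \alpha(\um,\unl) - 1.
\end{align*}
Combining this with the two instances of Lemma \ref{L:symmetry} gives
\begin{align*}
\beta(\tunl,\tum) \;=\; \alpha(\tum,\tunl) \;=\; \alpha(\um,\unl) - 1 \;=\; \beta(\unl,\um) - 1,
\end{align*}
which is the desired equality. There is no real obstacle here; the work has already been done in establishing Lemma \ref{L:symmetry} and Corollary \ref{lem: ca drop}, and this final step is simply bookkeeping. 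The only thing worth double-checking is that the deletion procedure commutes with swapping the roles of $\unl$ and $\um$, which is immediate from the symmetric definition of $\tunl$ and $\tum$ in terms of the positions $i$ and $j$ where the common value sits.
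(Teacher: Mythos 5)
Your argument is correct and is precisely the paper's own proof: the authors also deduce the corollary by combining Corollary \ref{lem: ca drop} with Lemma \ref{L:symmetry}, and you have merely spelled out the bookkeeping the paper leaves implicit.
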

\begin{proof}
    Combine Corollary \ref{lem: ca drop} and Lemma \ref{L:symmetry}.
\end{proof}

\begin{example}
Continuing Example \ref{ex:nondisjoint} we see that $\alpha(\unl,\um)=4$ and $\alpha(\tunl, \tum)=3$:
\begin{center}
\hspace{1.5cm}
  \begin{tikzpicture}[scale=0.67]
  \draw[rotate=45] (4.95,0) ellipse (0.35cm and 2.8cm);
  \node at (6.5,1.8) {$D^+_{\alpha(\unl,\um)}$};
    \foreach \x in {1,2,3,4,5}
    \foreach \y in {1,2,3,4,5}
    {
    \node[vertex] at (\x,\y) {};
    }
    \fill (1,6-1) node[fvertex] {};
    \fill (2,6-1) node[fvertex] {};
    \fill (3,6-1) node[fvertex] {};
    \fill (4,6-1) node[fvertex] {};
    \fill (5,6-1) node[fvertex] {};
    \fill (2,6-2) node[fvertex] {};
    \fill (3,6-2) node[fvertex] {};
    \fill (4,6-2) node[fvertex] {};
    \fill (5,6-2) node[fvertex] {};
    \fill (3,6-3) node[fvertex] {};
    \fill (4,6-3) node[fvertex] {};
    \fill (5,6-3) node[fvertex] {};
    \fill (5,6-4) node[fvertex] {};
 \draw (0.5,4.5) -- (5.5,4.5);
\draw (0.5,3.5)--(5.5,3.5);
\draw (2.5,0.5)--(2.5,5.5);
\draw (1.5,0.5)--(1.5,5.5);
 \draw[draw=green, fill=yellow, opacity=0.2] (0.5,5.5) -- (1.5,5.5)--(1.5,4.5)--(0.5,4.5) -- cycle;
 \draw[draw=green, fill=yellow, opacity=0.2] (2.5,5.5) -- (5.5,5.5)--(5.5,4.5)--(2.5,4.5) -- cycle;
 \draw[draw=green, fill=yellow, opacity=0.2] (0.5,3.5) -- (1.5,3.5)--(1.5,0.5)--(0.5,0.5) -- cycle;
 \draw[draw=green, fill=yellow, opacity=0.2] (2.5,3.5) -- (5.5,3.5)--(5.5,0.5)--(2.5,0.5) -- cycle;
  \end{tikzpicture}
\hspace{1cm}
  \begin{tikzpicture}[scale=0.67]
    \draw[rotate=45] (4.25,0) ellipse (0.35cm and 2.1cm);
  \node at (7,1.8) {$D^+_{\alpha(\tunl,\tum)}=D^+_{\alpha(\unl,\um)-1}$};
    \foreach \x in {1,2,3,4}
    \foreach \y in {1,2,3,4}
    {
    \node[vertex] at (\x,\y) {};
    }
    \fill (1,5-1) node[fvertex] {};
    \fill (2,5-1) node[fvertex] {};
    \fill (3,5-1) node[fvertex] {};
    \fill (4,5-1) node[fvertex] {};
    \fill (2,5-2) node[fvertex] {};
    \fill (3,5-2) node[fvertex] {};
    \fill (4,5-2) node[fvertex] {};
    \fill (4,5-3) node[fvertex] {};
\draw (0.5,3.5)--(4.5,3.5);
\draw (1.5,0.5)--(1.5,4.5);
 \draw[draw=yellow, fill=yellow, opacity=0.2] (0.5,4.5) -- (4.5,4.5)--(4.5,0.5)--(0.5,0.5) -- cycle;
  \end{tikzpicture}
\end{center}

\end{example}

\begin{corollary} \label{cor:disjoint case}
A pair of $k$-subsets $\unl$ and $\um$ are noncrossing if and only if
\[
\alpha(\unl,\um)+\beta(\unl,\um) -|\unl \cap \um|=k.
\]
\end{corollary}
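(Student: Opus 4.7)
The plan is to reduce the general statement to the disjoint case already handled by Lemma \ref{lem: single step}, using the reduction machinery developed in Section \ref{S:reduction}. More precisely, I will argue by induction on $r = |\unl \cap \um|$.

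For the base case $r = 0$, the $k$-subsets $\unl$ and $\um$ are disjoint, and Lemma \ref{lem: single step} states that $\unl$ and $\um$ are noncrossing if and only if $\alpha(\unl,\um) + \beta(\unl,\um) = k$, which coincides with the desired equation since $|\unl \cap \um| = 0$. For the inductive step, assume the result for all pairs of subsets whose intersection has cardinality strictly less than $r$, and let $\unl, \um$ be $k$-subsets with $|\unl \cap \um| = r \geq 1$. Pick any pair $(i,j)$ with $\ell_i = m_j$ and form the $(k-1)$-subsets $\tunl$ and $\tum$ by deleting this common element, as in Section \ref{S:reduction}. Then $|\tunl \cap \tum| = r - 1$ and, by Corollaries \ref{lem: ca drop} and \ref{lem: cb drop}, we have
\begin{align*}
\alpha(\tunl,\tum) &= \alpha(\unl,\um) - 1, \\
\beta(\tunl,\tum) &= \beta(\unl,\um) - 1.
\end{align*}

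The inductive hypothesis applied to $\tunl, \tum$ (now $(k-1)$-subsets with intersection of size $r-1$) gives that $\tunl$ and $\tum$ are noncrossing if and only if
\[
\alpha(\tunl,\tum) + \beta(\tunl,\tum) - |\tunl \cap \tum| = k-1,
\]
and substituting the three identities above shows this is equivalent to $\alpha(\unl,\um) + \beta(\unl,\um) - |\unl \cap \um| = k$. It therefore remains to check that noncrossing is preserved under deletion of a common element: if $\unl$ and $\um$ cross, a witnessing quadruple $i_1 < j_1 < i_2 < j_2$ (with $i_1, i_2 \in \unl \setminus \um$ and $j_1, j_2 \in \um \setminus \unl$) consists entirely of non-common elements, hence still witnesses a crossing of $\tunl$ and $\tum$; conversely, any crossing of $\tunl$ and $\tum$ lifts to the same quadruple in $\unl$ and $\um$.

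The main (already-completed) obstacle is the dimension-preserving behaviour of $\alpha$ and $\beta$ under deletion of a common element, which was the content of Corollaries \ref{lem: ca drop} and \ref{lem: cb drop}; once these are in hand, the present corollary is a clean induction. The only subtlety is the equivalence of the noncrossing property under deletion, but this is immediate from the fact that the definition of crossing involves only elements in the symmetric difference.
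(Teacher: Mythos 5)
Your argument is correct and follows essentially the same route as the paper: the paper performs the same reduction by repeatedly deleting common elements $\ell_i = m_j$ (using Corollaries \ref{lem: ca drop} and \ref{lem: cb drop} and the invariance of noncrossing, which depends only on the symmetric difference) until reaching the disjoint case handled by Lemma \ref{lem: single step}, whereas you recast this iteration as an induction on $|\unl \cap \um|$. The two are logically equivalent and invoke the same ingredients.
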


\begin{proof}
Suppose that $\unl$ and $\um$ are disjoint. Then the result is precisely Lemma \ref{lem: single step} and we are done.

If $\unl$ and $\um$ are not disjoint, suppose that $\ell_i=m_j$. Then, consider the $k-1$-subsets $\tunl \colonequals \unl \setminus \{\ell_i\}$ and $\tum\colonequals \um \setminus \{m_j\}$. Then, 
\begin{itemize}
\item $\tunl$ and $\tum$ are noncrossing if and only if $\unl$ and $\um$ are noncrossing;
\item $|\tunl \cap \tum|=|\unl \cap \um|-1$;
\item $\alpha(\tunl,\tum)=\alpha(\unl,\um)-1$ using Corollary \ref{lem: ca drop};
\item $\beta(\tunl,\tum)=\beta(\unl,\um)-1$ using Corollary \ref{lem: cb drop};
\end{itemize}
If $t \colonequals |\unl \cap \um| =1$ then $\tunl$ and $\tum$ are disjoint. If not repeat the process by removing another equality, and continue until you end up with disjoint $(k-t)$-subsets $\tunl$ and $\tum$ such that:
\begin{itemize}
\item $\tunl$ and $\tum$ are noncrossing if and only if $\unl$ and $\um$ are noncrossing;
\item $\alpha(\tunl,\tum)=\alpha(\unl,\um)-t$;
\item $\beta(\tunl,\tum)=\beta(\unl,\um)-t$;
\end{itemize}
Then,
\begin{align*}
\alpha(\unl,\um) + \beta(\unl,\um)-|\unl\cap\um|=k &\iff \alpha(\unl,\um) + \beta(\unl,\um)-t=k \\
&\iff (\alpha(\unl,\um)-t)+(\beta(\unl,\um)-t) =k-t \tag{rearrange}\\
&\iff \alpha(\tunl,\tum) + \beta(\tunl,\tum) =k-t \\
&\iff \text{$\tunl$ and $\tum$ are noncrossing} \tag{by Lemma \ref{lem: single step}}\\
&\iff \text{$\unl$ and $\um$ are noncrossing}.
\end{align*}
\end{proof}

We are now ready to prove Theorem \ref{T:compatible}, keeping the notation from the beginning of Section \ref{S:tool}, where $\unl = \unl(I)$ and $\um = \unl(J)$.

\begin{proof}[Proof of Theorem \ref{T:compatible}]
Recall that we wish to show $\cdim(\Ext^1(I,J))=0$ if and only if $\unl$ and $\um$ are noncrossing. But this follows directly from Theorem \ref{T: ext dimension} and Corollary \ref{cor:disjoint case}.
\end{proof}



\begin{thebibliography}{ACF{\etalchar{+}}22}

\bibitem[ACF{\etalchar{+}}22]{ACFGS}
J.~August, M.-W. Cheung, E.~Faber, S.~Gratz, and S.~Schroll.
\newblock {Cluster structures for the $A_\infty$ singularity}.
\newblock 2022.
\newblock arXiv:2205.15344.

\bibitem[ADS14]{AssemDupontSchiffler}
I.~Assem, G.~Dupont, and R.~Schiffler.
\newblock On a category of cluster algebras.
\newblock {\em J. Pure Appl. Algebra}, 218(3):553--582, 2014.

\bibitem[BGS87]{BGS}
R.-O. Buchweitz, G.-M. Greuel, and F.-O. Schreyer.
\newblock Cohen-{M}acaulay modules on hypersurface singularities. {II}.
\newblock {\em Invent. Math.}, 88(1):165--182, 1987.

\bibitem[BH93]{BrunsHerzog}
W.~Bruns and J.~Herzog.
\newblock {\em Cohen-{M}acaulay rings}, volume~39 of {\em Cambridge Studies in
  Advanced Mathematics}.
\newblock Cambridge University Press, Cambridge, 1993.

\bibitem[BKM16]{BKM}
K.~Baur, A.~D. King, and B.~R. Marsh.
\newblock Dimer models and cluster categories of grassmannians.
\newblock {\em Proc. Lond. Math. Soc.}, 113(2):213--260, 2016.

\bibitem[Buc21]{BuchweitzMCM}
R.-O. Buchweitz.
\newblock {\em Maximal {C}ohen-{M}acaulay modules and {T}ate cohomology},
  volume 262 of {\em Math.~Surveys Monogr.}
\newblock American Mathematical Society, Providence, RI, 2021.
\newblock With appendices and an introduction by Luchezar L. Avramov, Benjamin
  Briggs, Srikanth B. Iyengar and Janina C. Letz.

\bibitem[Che12]{Chen12}
X.-W. Chen.
\newblock Three results on {F}robenius categories.
\newblock {\em Math. Z.}, 270(1-2):43--58, 2012.

\bibitem[Eis80]{Eisenbud}
D.~Eisenbud.
\newblock Homological algebra on a complete intersection, with an application
  to group representations.
\newblock {\em Trans. Amer. Math. Soc.}, 260(1):35--64, 1980.

\bibitem[FZ02]{FZ1}
S.~Fomin and A.~Zelevinsky.
\newblock Cluster algebras. {I}. {F}oundations.
\newblock {\em J. Amer. Math. Soc.}, 15(2):497--529, 2002.

\bibitem[GG14]{GG1}
J.~E. Grabowski and S.~Gratz.
\newblock Cluster algebras of infinite rank.
\newblock {\em J. Lond. Math. Soc. (2)}, 89(2):337--363, 2014.
\newblock With an appendix by Michael Groechenig.

\bibitem[GG18]{GG2}
J.~E. Grabowski and S.~Gratz.
\newblock Graded quantum cluster algebras of infinite rank as colimits.
\newblock {\em J. Pure Appl. Algebra}, 222(11):3395--3413, 2018.

\bibitem[GLS08]{GLSflagvarieties}
C.~Geiss, B.~Leclerc, and J.~Schr\"{o}er.
\newblock Partial flag varieties and preprojective algebras.
\newblock {\em Ann. Inst. Fourier (Grenoble)}, 58(3):825--876, 2008.

\bibitem[Gra15]{Gratz1}
S.~Gratz.
\newblock Cluster algebras of infinite rank as colimits.
\newblock {\em Math. Z.}, 281(3-4):1137--1169, 2015.

\bibitem[HJ12]{HJ-cat}
T.~Holm and P.~J{\o}rgensen.
\newblock On a cluster category of infinite {D}ynkin type, and the relation to
  triangulations of the infinity-gon.
\newblock {\em Math. Z.}, 270(1-2):277--295, 2012.

\bibitem[Hun10]{HunekeTight}
C.~Huneke.
\newblock Tight closure, parameter ideals, and geometry.
\newblock In {\em Six lectures on commutative algebra}, Mod. Birkh\"{a}user
  Class., pages 187--239. Birkh\"{a}user Verlag, Basel, 2010.

\bibitem[IT13]{IyamaTakahashi}
O.~Iyama and R.~Takahashi.
\newblock Tilting and cluster tilting for quotient singularities.
\newblock {\em Math. Ann.}, 356(3):1065--1105, 2013.

\bibitem[IT15]{ITcyclic}
K.~Igusa and G.~Todorov.
\newblock Cluster categories coming from cyclic posets.
\newblock {\em Comm. Algebra}, 43(10):4367--4402, 2015.

\bibitem[JKS16]{JKS16}
B.~T. Jensen, A.~D. King, and X.~Su.
\newblock A categorification of {G}rassmannian cluster algebras.
\newblock {\em Proc. Lond. Math. Soc. (3)}, 113(2):185--212, 2016.

\bibitem[Lus08]{Lusztig-survey}
G.~Lusztig.
\newblock A survey of total positivity.
\newblock {\em Milan J. Math.}, 76:125--134, 2008.

\bibitem[Mat89]{Matsumura}
H.~Matsumura.
\newblock {\em Commutative ring theory}, volume~8 of {\em Cambridge Studies in
  Advanced Mathematics}.
\newblock Cambridge University Press, Cambridge, second edition, 1989.
\newblock Translated from the Japanese by M. Reid.

\bibitem[Pos06]{Postnikov}
A.~Postnikov.
\newblock {Total positivity, Grassmannians, and networks}.
\newblock 2006.
\newblock arXiv:0609764.

\bibitem[PY21]{PaquetteYildirim}
C.~Paquette and E.~Y{\i}ld{\i}r{\i}m.
\newblock Completions of discrete cluster categories of type {$\mathbb{A}$}.
\newblock {\em Trans. London Math. Soc.}, 8(1):35--64, 2021.

\bibitem[Sco06]{Scott}
J.~S. Scott.
\newblock Grassmannians and cluster algebras.
\newblock {\em Proc. London Math. Soc. (3)}, 92(2):345--380, 2006.

\bibitem[Wei94]{Weibel}
C.~A. Weibel.
\newblock {\em An introduction to homological algebra}, volume~38 of {\em
  Cambridge Studies in Advanced Mathematics}.
\newblock Cambridge University Press, Cambridge, 1994.

\bibitem[Yos90]{Yos}
Y.~Yoshino.
\newblock {\em Cohen--Macaulay modules over Cohen--Macaulay rings}, volume 146
  of {\em Lond.~Math.~Soc.~Lecture Note Ser.}
\newblock Cambridge University Press, Cambridge, 1990.

\end{thebibliography}
\newcommand{\etalchar}[1]{$^{#1}$}

\end{document}